\newtheorem{theorem}{Theorem}[section]
\newtheorem{lemma}[theorem]{Lemma}
\newtheorem{proposition}[theorem]{Proposition}
\renewcommand{\leq}{\leqslant}
\renewcommand{\geq}{\geqslant}
\numberwithin{equation}{section}
\renewcommand{\leq}{\leqslant}
\renewcommand{\geq}{\geqslant}
\newcommand{\md}[1]{\ensuremath{(\operatorname{mod}\, #1)}}
\newcommand\E{\mathbf{E}}
\newcommand\C{\mathbf{C}}
\newcommand\Q{\mathbf{Q}}
\renewcommand\Im{\operatorname{Im}} 
\renewcommand\Re{\operatorname{Re}} 
\newcommand\ns{n^{\flat}} 
\newcommand\nl{n^{\sharp}} 
\newcommand\ldd{L_{d,\tilde d}} 
\newcommand\dmax{\Delta}
\newcommand\eps{\varepsilon}
\begin{document}

\title[Covering integers by $x^2 + dy^2$]{Covering integers by $x^2 + dy^2$}



\begin{abstract}
What proportion of integers $n \leq N$ may be expressed as $x^2 + dy^2$ for some $d \leq \dmax$, with $x,y $ integers? Writing $\dmax = (\log N)^{\log 2} 2^{\alpha \sqrt{\log \log N}}$ for some $\alpha \in (-\infty, \infty)$, we show that the answer is $\Phi(\alpha) + o(1)$, where $\Phi$ is the Gaussian distribution function $\Phi(\alpha) = \frac{1}{\sqrt{2\pi}} \int^{\alpha}_{-\infty} e^{-x^2/2} dx$.

A consequence of this is a phase transition: almost none of the integers $n \leq N$ can be represented by $x^2 + dy^2$ with $d \leq (\log N)^{\log 2 - \eps}$, but almost all of them can be represented by $x^2 + dy^2$ with $d \leq (\log N)^{\log 2 + \eps}$. \end{abstract}

\author{Ben Green}
\address{Mathematical Institute\\
Radcliffe Observatory Quarter\\
Woodstock Road\\
Oxford OX2 6GG\\
England}
\email{ben.green@maths.ox.ac.uk}

\author{Kannan Soundararajan}
\address{Department of Mathematics\\ Stanford University \\ Stanford CA 94305}
\email{ksound@stanford.edu}

\maketitle

\setcounter{tocdepth}{1}
\tableofcontents

\section{Introduction}

In this paper we are interested in how many integers $\leq N$ are covered by the values taken by the quadratic forms $x^2 + dy^2$, $d \leq \Delta$. 
Our main result is the following, which gives a fairly complete answer to this question.

\begin{theorem}[Main theorem]\label{mainthm}
Let $N$ be large and write, for some real number $\alpha$, \[ \dmax = (\log N)^{\log 2} 2^{\alpha \sqrt{\log \log N}}.\]
Then 
$$
\# \{ n\leq N: n = x^2 +dy^2 \text{ for some } 1\leq d \leq \Delta \}  = (\Phi(\alpha) + o(1)) N, 
$$ 
where $\Phi$ is the Gaussian distribution function $\Phi(\alpha) = \frac{1}{\sqrt{2\pi}}  \int^{\alpha}_{-\infty} e^{-x^2/2} dx$.
\end{theorem}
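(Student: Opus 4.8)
The plan is to reduce the theorem to the Erd\H{o}s--Kac theorem via the following dichotomy: up to a set of $n\le N$ of size $o(N)$, the integer $n$ is representable as $x^2+dy^2$ with $1\le d\le\Delta$ if and only if $\omega(n)\le \log\Delta/\log 2$, where $\omega(n)$ denotes the number of distinct prime factors. Since $\log\Delta/\log 2=\log\log N+\alpha\sqrt{\log\log N}$, the Erd\H{o}s--Kac theorem then shows that the integers satisfying this condition number $(\Phi(\alpha)+o(1))N$. The reason $\omega(n)$ governs the problem is class field theory: writing $g=\gcd(x,y)$ and $n=g^2n'$, if $n=x^2+dy^2$ then every odd prime $p\mid n'$ with $p\nmid d$ splits in $\mathbf Q(\sqrt{-d})$, and conversely $n$ is representable once all primes dividing $n$ split in $\mathbf Q(\sqrt{-d})$ and the corresponding product of prime ideals lies in the principal class of the form class group of discriminant $-4d$. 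As $d$ runs over $[1,\Delta]$ the splitting condition at a given prime holds for ``half'' of the $d$, so $n$ is representable for about $\Delta\,2^{-\omega(n)}$ values of $d$, up to a factor $\min(1,2^{\omega(n)}/h(-4d))$ from the principal-class constraint; as $h(-4d)\asymp\sqrt d\le\sqrt\Delta$ this factor is harmless in the decisive range, and the balance $\Delta\,2^{-\omega(n)}\approx 1$ --- that is, $\omega(n)\approx\log\Delta/\log 2$ --- is exactly what forces the exponent $\log 2$ in the statement.

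For the upper bound, fix a small $\eps>0$; the $n$ with $\omega(n)\le\log\log N+(\alpha+\eps)\sqrt{\log\log N}$ number $(\Phi(\alpha+\eps)+o(1))N$ by Erd\H{o}s--Kac, so it suffices to bound the representable $n$ with $\omega(n)>T:=\log\log N+(\alpha+\eps)\sqrt{\log\log N}$. Such $n$ contribute at most $2^{-T}\sum_{d\le\Delta}\sum_{n}2^{\omega(n)}$, the inner sum being over distinct $n\le N$ represented by $x^2+dy^2$. Splitting $n=g^2n'$ with $n'$ primitively represented and using $2^{\omega(n)}\le 2^{\omega(g)}2^{\omega(n')}$ together with $\sum_{g}2^{\omega(g)}g^{-2}<\infty$, the inner sum is $\ll_d \sum_{\substack{n'\le N\\ p\mid n'\Rightarrow p\text{ split in }\mathbf Q(\sqrt{-d})}}2^{\omega(n')}$, which by Selberg--Delange is $\ll C(d)\,N$ with $C(d)\ll L(1,\chi_{-4d})\ll\log\Delta$ (here one uses that the split primes have density $\tfrac12$, and that $z=2$ is the value minimizing $z/2-\log z$, with minimum $1-\log 2$, which is precisely why a genuine saving appears). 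Summing over $d$ gives a bound $\ll N\cdot 2^{-T}\Delta\log\Delta$, and since $2^{-T}\Delta=2^{-\eps\sqrt{\log\log N}}$ this is $o(N)$. Letting $\eps\to 0$ and using continuity of $\Phi$ yields the upper bound $(\Phi(\alpha)+o(1))N$.

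For the lower bound, I would show that almost every $n$ with $\omega(n)\le T':=\log\log N+(\alpha-\eps)\sqrt{\log\log N}$ is representable, by a second-moment argument for $r(n):=\#\{d\le\Delta:\ n=x^2+dy^2\}$. After removing the $o(N)$ integers $n$ with a large square divisor or with two prime factors exceeding $\Delta$, one establishes, for each $k\le T'$: (i) a first-moment estimate $\sum_{n\le N}r(n)\mathbf{1}_{\omega(n)=k}\asymp \min\!\bigl(\Delta\,2^{-k},\sqrt\Delta\bigr)\,\#\{n\le N:\omega(n)=k\}$, coming from equidistribution of the Frobenius class $[\mathfrak p]$ in the class group $\mathrm{Cl}(-4d)$ (together with the average size of $1/h(-4d)$); and (ii) a matching second moment, whose diagonal reproduces the first moment and whose off-diagonal $\sum_{d\ne d'}\#\{n\le N:\omega(n)=k,\ n=x^2+dy^2=x'^2+d'y'^2\}$ is asymptotically the square of the first moment --- equivalently, representability of $n$ by $x^2+dy^2$ and by $x^2+d'y^2$ are asymptotically independent events, which follows from joint equidistribution of $([\mathfrak p]_{-4d},[\mathfrak p]_{-4d'})$ in $\mathrm{Cl}(-4d)\times\mathrm{Cl}(-4d')$. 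Together these give $\mathrm{Var}(r\mid\omega(n)=k)\ll\E(r\mid\omega(n)=k)$; since $\E(r\mid\omega(n)=k)\gg 2^{\eps\sqrt{\log\log N}}$ for every $k\le T'$, Chebyshev's inequality gives $\#\{n:\omega(n)\le T',\ r(n)=0\}=o(N)$, and letting $\eps\to 0$ with continuity of $\Phi$ finishes the proof.

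\textbf{Main obstacle.} The heart of the matter is the off-diagonal in the lower bound's second moment: proving, with enough uniformity over all pairs $d\ne d'\le\Delta$, that simultaneous representability by $x^2+dy^2$ and by $x^2+d'y^2$ occurs at the ``independent'' rate --- i.e.\ that the pair of Frobenius classes in the two ring class fields equidistributes, so that the genus-theoretic correlations, the ambiguous classes, and the degenerate pairs (where $dd'$ or $d/d'$ is a square, or the fields are otherwise entangled) all contribute only a lower-order amount. Tightly linked to this is the treatment of prime factors $p\mid n$ with $p>\Delta$: the splitting condition $\bigl(\tfrac{-d}{p}\bigr)=1$ cannot be detected by averaging over $d\le\Delta<p$, so the large prime factors must be peeled off and shown to behave generically. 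Finally, everything has to be done with an error $o(\sqrt{\log\log N})$ in the threshold $\log\Delta/\log 2$, since any coarser control would smear out the Gaussian and destroy the precise dependence on $\alpha$.
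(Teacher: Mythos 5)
Your architecture coincides with the paper's: the upper bound is a union bound over $d\le\Delta$ with a $2^{-k}$ saving per form coming from the splitting conditions (this is Proposition 4.1 of the paper, and your sketch of it is sound), the lower bound is a second moment of the representation count over $d$ computed via class group characters, and the Gaussian emerges from summing over the number of prime factors (the paper uses Selberg's local law for $\Omega(n)=k$ rather than Erd\H{o}s--Kac, an immaterial difference). The gap is in the lower bound, at precisely the step you single out as the main obstacle and then assert rather than resolve: the claim that $\mathrm{Var}(r\mid\omega(n)=k)\ll\E(r\mid\omega(n)=k)$ --- equivalently, that representability by $x^2+dy^2$ and by $x^2+d'y^2$ are asymptotically independent events --- is false for unrestricted $d\ne d'\le\Delta$, and no joint Chebotarev/equidistribution input for individual primes will make it true.

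Concretely: conditioned on $n$ having $k$ prime factors, the chance that all of them split in both $\Q(\sqrt{-d})$ and $\Q(\sqrt{-d'})$ involves $\prod_{p\mid n}\tfrac14(1+\chi_D(p))(1+\chi_{D'}(p))$, and summing over $n$ produces the factor $L(1,\chi_D)L(1,\chi_{D'})L(1,\chi_{dd'})$ (the paper's Proposition \ref{prop5.3}), against $L(1,\chi_D)$ for a single $d$. Because $\chi_{dd'}=\chi_D\chi_{D'}$ at odd primes, the three characters are not independent as $d,d'$ vary: the Euler factor at $p$ of the average of $L(1,\chi_D)L(1,\chi_{D'})L(1,\chi_{dd'})$ over $d,d'$ exceeds that of the product of the separate averages by $1+p^{-2}+O(p^{-3})$, and the product over all $p$ is a constant $c_0>1$. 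Hence the off-diagonal second moment is $(c_0+o(1))\,\E(r)^2$, the variance is $\asymp\E(r)^2$ rather than $O(\E(r))$, and Chebyshev yields only that a positive proportion --- not almost all --- of the $n$ with $\omega(n)\le T'$ are representable. The discrepancy is an accumulation of $\Theta(1/p)$ correlations over the $\asymp\log\log N$ prime factors of $n$, i.e.\ a genuine constant, not an error term that better uniformity in Chebotarev can remove. The paper's resolution (Sections \ref{sec5}--\ref{sec7}) is to restrict $d$ to primes lying in prescribed residue classes modulo every prime $p\le W$ for a slowly growing $W$ (forcing $\chi_D(p)=1$ for all $p\le W$) and to normalise $R_D(n)$ by $h_K\asymp|D|^{1/2}L(1,\chi_D)$ so the averaged statistic has mean $1$; this pins down the local factors at small primes, confines the excess correlation to $p>W$ where it contributes only $1+O(1/W)$, and is exactly what makes the variance $o(1)$. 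Some such device is indispensable; as written, your proposal proves only a positive-proportion version of the lower bound, hence not the stated theorem.
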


The problem of covering integers by this family of binary quadratic forms seems to have been first considered in the work of Hanson and Vaughan 
\cite{hv}.   Using the circle method they established that almost all integers $n\leq N$ may be covered with $\Delta = \log N (\log \log N)^{3+\eps}$ 
for any $\eps >0$, and that a positive proportion of the integers below $N$ may be covered using $\Delta = \log N \log \log N$.  Diao \cite{diao} found a 
much shorter proof of the latter result, and in his argument $d$ could be restricted to prime values so that a smaller set of forms is used.  

Landau established that the number of integers below $N$ that are sums of two squares is 
$\sim BN/(\log N)^{1/2}$ for a positive constant $B$.  This 
was extended by Bernays to show that for any fixed primitive positive definite binary quadratic form $f$, the number of integers below $N$ 
that are represented by $f$ is $\sim B_f N (\log N)^{-1/2}$, for a positive constant $B_f$ (which in fact depends only on the discriminant of $f$).   
More recently, Blomer \cite{blomer1, blomer2}, and Blomer and Granville \cite{bg} consider in detail the number of integers up to $N$ that are 
represented by $f$ uniformly in the form $f$ (thus allowing the discriminant to grow with $N$).   These results, taken with the union bound, suggest that if $\Delta$ 
is smaller than $(\log N)^{1/2-\eps}$ then almost all $n\leq N$ cannot be covered by the forms $x^2+dy^2$ with $d\leq \Delta$.  
However, as Theorem \ref{mainthm} reveals, the true threshold for $\Delta$ is neither $(\log N)^{1/2}$ nor $\log N$ but instead $(\log N)^{\log 2}$.

We shall in fact prove a more precise version of Theorem \ref{mainthm}, counting the number of integers below $N$ with $k$ prime factors that may be represented 
as $x^2+ dy^2$ with $d\leq \Delta$.   Throughout let $\Omega(n)$ denote the number of prime factors of $n$ counted with multiplicity, and define 
\[
{\mathcal A}(N,k) = \{ n \leq N: \; \Omega(n) =k\}.
\] 
Recall that most integers below $N$ have about $\log \log N$ prime factors, a result first established by Hardy and Ramanujan.  The well known work of 
Erd{\H o}s and Kac established that $\Omega(n)$ has a normal distribution with mean $\sim \log \log N$ and variance $\sim \log \log N$, while Selberg's work \cite{selberg-54} gave 
still more precise results establishing an asymptotic formula for ${\mathcal A}(N,k)$ uniformly in a wide range of $k$. To reduce the visual complexity of expressions involving double logs later on, it is convenient to set (throughout the paper)
\[ k_0 := \log \log N.\]
The following simplified version of Selberg's result is an immediate consequence of \cite[Theorem II.6.5]{tenenbaum}.

\begin{lemma} \label{lem1.2} Let $N$ be large.  Uniformly for integers $k$ in the range $|k -k_0| \leq \tfrac 12 k_0$,
we have 
\[
|{\mathcal A}(N,k)| = \frac{N}{\log N} \frac{k_0^k}{k!}\Big( 1+ O\Big( \frac{1+|k-k_0|}{k_0}\Big)\Big). 
\]
\end{lemma}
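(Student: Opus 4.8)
The plan is to start from the Selberg--Sathe asymptotic for the count of integers with a prescribed number of prime factors and simply unwind the constants. By \cite[Theorem II.6.5]{tenenbaum}, for any fixed $\eps>0$ and uniformly for integers $k$ with $1 \leq k \leq (2-\eps)k_0$ one has
\[
|{\mathcal A}(N,k)| = \frac{N}{\log N}\cdot \frac{k_0^{k-1}}{(k-1)!}\Big(G\big(\tfrac{k}{k_0}\big) + E(N,k)\Big),
\]
where $E(N,k) = O_\eps(k/k_0^2)$ --- so in particular $E(N,k) = O(1/k_0)$ throughout the narrower range $|k-k_0| \leq \tfrac12 k_0$ relevant to us --- and $G$ is holomorphic on the disc $|z|<2$, being given there by $G(z) = \frac{1}{\Gamma(z)}\prod_p (1-\tfrac zp)^{-1}(1-\tfrac1p)^{z}$. (This $G$ arises from factoring $\sum_{n\geq 1}z^{\Omega(n)}n^{-s} = \prod_p(1-zp^{-s})^{-1}$ as $\zeta(s)^z$ times an Euler product that is absolutely convergent, and holomorphic in $z$, in the range $\Re s > \tfrac12$, $|z|<2$; the pole of the factor at $p=2$ when $z=2$ is what forces both the disc $|z|<2$ and the range $k \leq (2-\eps)k_0$.)

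First I would record the normalisation $G(1)=1$; this is the only step that is not pure bookkeeping. Indeed, at $z=1$ the generating series above is simply $\zeta(s)$, so the auxiliary Euler product is identically $1$, while $\Gamma(1)=1$. Since $G$ is holomorphic on $|z|<2$ it has bounded derivative on the compact disc $|z-1| \leq \tfrac34$, and hence $G(z) = 1 + O(|z-1|)$ on that disc.

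Next I would specialise to $|k-k_0| \leq \tfrac12 k_0$. In this range $z := k/k_0$ lies in $[\tfrac12,\tfrac32]$, so, fixing any $\eps < \tfrac12$, we are safely within the hypotheses of \cite[Theorem II.6.5]{tenenbaum}; moreover $|z-1| \leq \tfrac12$, so $G(k/k_0) = 1 + O(|k-k_0|/k_0)$ by the previous paragraph. Since $G(k/k_0) \asymp 1$ and $E(N,k) = O(1/k_0)$ here, the bracketed factor equals $1 + O\big(\tfrac{1+|k-k_0|}{k_0}\big)$. Finally, as $k/k_0 = 1 + (k-k_0)/k_0$ with $|k-k_0|/k_0 \leq \tfrac12$, we have $\frac{k_0^{k-1}}{(k-1)!} = \frac{k}{k_0}\cdot\frac{k_0^{k}}{k!} = \big(1 + O(|k-k_0|/k_0)\big)\frac{k_0^{k}}{k!}$. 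Multiplying these three estimates together gives precisely the statement of Lemma~\ref{lem1.2}.

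There is no genuine obstacle here beyond this bookkeeping; two points merit a moment's care. First, that $G(1)=1$ \emph{exactly}: this is what makes the main term collapse to the clean expression $\frac{N}{\log N}\frac{k_0^k}{k!}$, with no leftover Euler-product constant surviving in the asymptotic. Second, that \cite[Theorem II.6.5]{tenenbaum} is applied only with $k/k_0 \leq \tfrac32$, i.e.\ strictly inside the radius-$2$ window where the Selberg--Sathe formula holds in this clean form. (Any minor discrepancy in how the cited theorem is normalised --- whether one sees $\frac{k_0^{k}}{k!}$ or $\frac{k_0^{k-1}}{(k-1)!}$, and $G(\tfrac{k-1}{k_0})$ or $G(\tfrac{k}{k_0})$ --- alters the main term only by a factor $1 + O(1/k_0)$ and is harmlessly absorbed into the error term.)
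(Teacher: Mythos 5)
Your proposal is correct and is precisely the intended derivation: the paper offers no proof beyond citing \cite[Theorem II.6.5]{tenenbaum}, and your unwinding of that theorem — checking $G(1)=1$, using holomorphy of $G$ on $|z|<2$ to get $G(k/k_0)=1+O(|k-k_0|/k_0)$, and absorbing the factor $k/k_0$ from $\tfrac{k_0^{k-1}}{(k-1)!}$ into the error term — is exactly the bookkeeping the authors leave to the reader. The normalisation caveats you flag ($\Gamma(z)$ versus $\Gamma(z+1)$, argument $k/k_0$ versus $(k-1)/k_0$) are indeed harmless for the stated error term.
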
 

For a given $k$ in a suitable interval around $\log \log N$, we shall show (the `upper bound', Theorem \ref{thm1.3} below) that almost none of the integers in ${\mathcal A}(N,k)$ are represented by $x^2+dy^2$ 
with $d\leq \Delta$ if $\Delta$ is a bit smaller than $2^k$.  This changes when $\Delta$ becomes a bit larger than $2^k$, when almost all the integers in ${\mathcal A}(N,k)$ may be so represented. This is the `lower bound', Theorem \ref{thm1.4} below. From these results, Theorem \ref{mainthm} will follow swiftly.

We turn now to the precise statements.

\begin{theorem}[Upper bound] \label{thm1.3}  Let $N$ be large, and let $k$ be an integer in the range 
\begin{equation} 
\label{1.3} 
|k- k_0| \leq k_0^{2/3}. 
\end{equation}
Suppose $\Delta \leq 2^k/k^4$.  The number of integers $n \in {\mathcal A}(N,k)$ that may be written as $x^2 + dy^2$ with $1\leq d\leq \Delta$ is $\ll N/k_0$.
\end{theorem}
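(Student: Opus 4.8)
The plan is a union bound over $d$. If $n$ is represented by $x^2+dy^2$ for some $d\le\Delta$, then for each such $d$ the integer $n$ satisfies a local (quadratic-residue) obstruction, and for fixed $d$ the number of $n\in\mathcal A(N,k)$ satisfying the obstruction is $\ll N/2^k$ up to a factor polynomial in $k_0$; since $\Delta/2^k\le k^{-4}$, summing over $d\le\Delta$ gives $\ll Nk_0^{O(1)}/k^4\ll N/k_0$. The regime $k\approx k_0$ will turn out to be a \emph{large-deviation} regime --- a typical integer below $N$ all of whose prime factors are quadratic residues mod $\sim d$ has only about $\tfrac12\log\log N$ prime factors --- and the exponential cost $2^{-k}$ of this deviation, with the $2$ coming from the prime $2$, is exactly what must beat $\Delta$; this is why the threshold in the hypothesis is $\Delta\asymp2^{k}$.

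First, degenerate representations. If $y=0$ then $n$ is a square, and if $x=0$ then $n=dy^2$ with $d\le\Delta$; the number of $n\le N$ of either kind is $\ll N^{3/4}$, since $\Delta\le2^{k}/k^{4}\le(\log N)^{O(1)}$ when $k\le\tfrac32 k_0$. So assume $x,y\ge1$. If $p$ is an odd prime with $p\mid n$ and $p\nmid d$, then either $p\mid y$, forcing $p\mid x^2$ and hence $p^2\mid n$, or $p\nmid y$, forcing $-d\equiv(xy^{-1})^2\pmod p$ and hence $\bigl(\tfrac{-d}{p}\bigr)=1$. Writing $g=\gcd(x,y)$ and $m=n/g^2$, the integer $m$ is primitively represented by $X^2+dY^2$, so every odd prime factor of $m$ coprime to $d$ lies in $\mathcal G_d:=\{p:\bigl(\tfrac{-d}{p}\bigr)=1\}$ and hence $m$ is supported on $\mathcal P_d:=\{2\}\cup\{p\mid d\}\cup\mathcal G_d$. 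Summing over $d$ and over the square divisor $g^2$ of $n$ (and absorbing the range $g>N^{1/4}$ into the error) gives
\[
\#\{n\in\mathcal A(N,k):\text{representable}\}\ \ll\ N^{3/4}+\sum_{d\le\Delta}\ \sum_{g\le N^{1/4}}\#\bigl\{m\le N/g^2:\ \Omega(m)=k-2\Omega(g),\ p\mid m\Rightarrow p\in\mathcal P_d\bigr\}.
\]

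The crux is the uniform bound
\[
\#\{m\le X:\ \Omega(m)=j,\ p\mid m\Rightarrow p\in\mathcal P_d\}\ \ll\ \frac{X}{2^{j}}\,k_0^{O(1)}\qquad(j\ge1,\ \ N^{1/2}\le X\le N,\ \ d\le\Delta).
\]
Two inputs enter. (a) \emph{Uniform density}: $\bigl(\tfrac{-d}{\cdot}\bigr)$ is induced by a non-principal real character $\chi_d$ of conductor $\le4d\le(\log N)^{O(1)}$, so $L(1,\chi_d)\ll\log(4d)\ll k_0$; hence $\sum_{p\le x}\chi_d(p)/p\le\log L(1,\chi_d)+O(1)\le\log k_0+O(1)$ and therefore $\sum_{p\le x,\,p\in\mathcal P_d}1/p\le\tfrac12\log\log x+O(\log k_0)$, uniformly in $d\le\Delta$ --- only the \emph{upper} bound for $L(1,\chi_d)$ is used, so exceptional zeros play no role. (b) \emph{Rankin + Shiu}: for $z\in(0,2)$ one has $\#\{m\le X:\Omega(m)=j,\ p\mid m\Rightarrow p\in\mathcal P_d\}\le z^{-j}\sum_{m\le X,\,\text{supp }\mathcal P_d}z^{\Omega(m)}$, and Shiu's upper bound for the multiplicative function $z^{\Omega(\cdot)}\mathbf 1[\text{supp }\mathcal P_d]$ (whose values on prime powers are $\le2^{a}$) together with (a) gives $\sum_{m\le X,\,\text{supp }\mathcal P_d}z^{\Omega(m)}\ll\frac{X(\log X)^{z/2-1}}{1-z/2}\,k_0^{O(1)}$. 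Choosing $z=2\bigl(1-\tfrac1{2k_0}\bigr)$ --- so that $z^{-j}\ll2^{-j}$ for $j\le(1+o(1))k_0$, $(1-z/2)^{-1}\ll k_0$, and $(\log X)^{z/2-1}\ll1$ --- yields the displayed bound. (For $X<N^{1/2}$, i.e.\ $g>N^{1/4}$, one uses instead the trivial bound $\le X$, which is why that range was thrown into the error above.)

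Feeding this back, the inner sum over $g$ is $\ll\tfrac{N}{2^{k}}k_0^{O(1)}\sum_{g\le N^{1/4},\,\Omega(g)\le k/2}4^{\Omega(g)}/g^2$. Here one must split $g=2^{c}g'$ with $g'$ odd: $\sum_{g}4^{\Omega(g)}/g^2$ diverges (its Euler factor at $2$ is $\sum_{c}1$), but $\sum_{g'\text{ odd}}4^{\Omega(g')}/g'^2<\infty$ and the number of admissible $c$ is $O(k_0)$, so the sum over $g$ is $\ll\tfrac{N}{2^{k}}k_0^{O(1)}$. Summing over $d\le\Delta$ and using $\Delta/2^{k}\le k^{-4}$ with $k\asymp k_0$ gives $\#\{n\in\mathcal A(N,k):\text{representable}\}\ll N^{3/4}+Nk_0^{O(1)}/k^{4}\ll N/k_0$. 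The main obstacles are: (i) making the count of $\mathcal P_d$-supported integers with exactly $k\approx k_0$ prime factors simultaneously sharp enough (loss only $k_0^{O(1)}$, not $\log N$) and uniform in $d$ --- this forces the large-deviation range $z\to2$ of Rankin's method and requires the uniform bound on $L(1,\chi_d)$; and (ii) correctly organising imprimitive representations, in particular the powers of $2$, where a careless divisor sum diverges.
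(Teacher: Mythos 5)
Your overall strategy---union bound over $d$, then for each fixed $d$ extract the factor $2^{-k}$ from the quadratic residue constraints---is the right one, and is also the shape of the paper's argument. But the implementation is genuinely different. The paper (Proposition~\ref{prop4.1}) factors $n=rs$ with $r$ collecting the primes dividing $|D|$ and the ``square part'' and $s$ squarefree and coprime to $|D|$; when $n$ is representable every prime dividing $s$ splits, so $(1*\chi_D)(s)=2^{\Omega(s)}$, which converts the indicator of representability into $2^{-k}2^{\Omega(r)}(1*\chi_D)(s)$. One then only needs the elementary P\'olya--Vinogradov estimate $\sum_{s\le x}(1*\chi_D)(s)\ll x\log|D|$ (Lemma~\ref{lem4.3}) and a convergent Euler product for the $r$-sum. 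No $L$-function bounds, no mean-value theorem, no Rankin trick, no gcd extraction: the multiplicativity of $1*\chi_D$ handles the imprimitive representations automatically. You instead extract $g=\gcd(x,y)$ to reach a primitive representation, bound the count of $\mathcal P_d$-supported integers with $\Omega(m)=j$ by Rankin's trick with $z\to2$ combined with a mean-value theorem and an upper bound on $L(1,\chi_d)$, then resum over $g$ and $d$. Both routes give $\ll 2^{-k}k_0^{O(1)}N$ per $d$; yours buys some conceptual transparency (the large-deviation interpretation) at the cost of considerably more machinery.

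There is, however, a genuine gap in your use of Shiu's theorem. As usually stated it requires, in addition to $f(p^a)\le A^a$, that $f(n)\ll_\eps n^\eps$ for every $\eps>0$. For $f(m)=z^{\Omega(m)}\mathbf 1[\mathrm{supp}\,\mathcal P_d]$ with $z=2(1-\tfrac1{2k_0})$ this \emph{fails} at powers of $2$: $f(2^a)=z^a$ is not $\ll_\eps 2^{a\eps}$ for any fixed $\eps<1$ once $k_0$ is large, since then $z>2^\eps$. So Shiu cannot be invoked as stated. This is fixable: either split off the exact power of $2$ first (which you effectively do later in the $g$-sum, but need to do here too), or use a Halberstam--Richert type bound which needs only $f(p)\le A_1$ and $\sum_{p,\,l\ge2}f(p^l)\log(p^l)/p^l\le A_2$; the latter holds here with $A_2\ll(2-z)^{-2}\asymp k_0^2$, contributing a harmless extra $k_0^{O(1)}$. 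Two smaller points: the inequality $\sum_{p\le x}\chi_d(p)/p\le\log L(1,\chi_d)+O(1)$ needs the remark that the tail $\sum_{p>x}\chi_d(p)/p$ is $O(1)$ uniformly for $x\ge N^{1/2}\gg d^{A}$ (say via Siegel--Walfisz), rather than being an identity valid at all scales; and the boundary case $j=0$ (i.e.\ $\Omega(g)=k/2$, $m=1$) should be mentioned, though its contribution $\le\Delta\,N^{1/4}$ is negligible. With these repairs the argument goes through and gives the same conclusion as Theorem~\ref{thm1.3}, but the paper's route is markedly shorter and entirely elementary.
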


An application of Stirling's formula (see \eqref{stirling-3} below) shows that for $k$ in the range \eqref{1.3} 
$$ 
|{\mathcal A}(N,k)| = \frac{N}{\sqrt{2\pi k_0}} \exp\Big( -\frac{(k-k_0)^2}{2k_0}\Big) \Big( 1+ O\Big( k_0^{-1/5}\Big) \Big). 
$$ 
Thus Theorem \ref{thm1.3} is really of interest only when $|k-k_0| \leq (k_0 \log k_0)^{1/2}$.  This range still includes 
most typical integers below $N$, and  Theorem \ref{thm1.3} may be used to establish the upper bound for the integers below $N$ of the form $x^2+dy^2$ with $d\leq \Delta$ that is implicit in Theorem \ref{mainthm}.  The corresponding lower bound in Theorem \ref{mainthm} is implied by the following result. 

\begin{theorem}[Lower bound] \label{thm1.4}  Let $N$ be large, and let $k$ be an integer in the range given in \eqref{1.3}.   Suppose $\Delta \geq k^3 2^k$.  Let ${\mathcal E}(N,k)$ 
denote the set of integers in ${\mathcal A}(N,k)$ that cannot be represented as $x^2+dy^2$ with $d\leq \Delta$.  Then  
\[ 
|{\mathcal E}(N,k)| \ll \frac{|\mathcal{A}(N,k)|}{\log k_0} + Nk_0^{-3/4}.\]
\end{theorem}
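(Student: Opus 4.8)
The plan is to recast the problem in terms of ideal class groups and then to run a moment argument. First, it suffices to treat the \emph{squarefree part}: if $n = n_1 m^2$ with $n_1$ squarefree, then any primitive representation $n_1 = x^2 + dy^2$ yields $n = (xm)^2 + d(ym)^2$, so I only need all but $\ll |\mathcal{A}(N,k)|/\log k_0 + N k_0^{-3/4}$ of the $n \in \mathcal{A}(N,k)$ to have squarefree part representable by some $x^2 + dy^2$ with $d \le \Delta$. Since the conclusion only strengthens as $\Delta$ grows, I may assume $\Delta = k^3 2^k$. At the outset I discard the $n$ whose square part $m$ is abnormally large, or which have abnormally many prime factors below $\Delta$; standard estimates (summing $\sum 1/m^2$ over $m$ with many prime factors) bound these by $\ll N k_0^{-3/4}$, and for the survivors $n_1 = p_1 \cdots p_{k'}$ has $k' \ge k - k^{2/3}$ prime factors, all but $O(\log\log\log N)$ of them exceeding $\Delta$. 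Now I use the classical correspondence between representations by the principal form of discriminant $-4d$ and principal ideals: for $n_1$ coprime to $2d$, one has $n_1 = x^2 + dy^2$ for integers $x,y$ precisely when, for each prime $p_i \mid n_1$ (necessarily $\legendre{-d}{p_i} \ne -1$), one can choose a prime ideal $\mathfrak{p}_i$ above $p_i$ in $\Z[\sqrt{-d}]$ so that $\prod_i \mathfrak{p}_i$ is principal; equivalently $\prod_i [\mathfrak{p}_i]^{\epsilon_i} = 1$ in the form class group $G = \mathrm{Cl}(-4d)$ for some signs $\epsilon_i \in \{\pm 1\}$.

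The reason the threshold is of order $2^k$ is a short probabilistic lemma: if the classes $[\mathfrak{p}_i]$ are reasonably spread in $G$, then, expanding $\mathbf{1}[\prod_i [\mathfrak{p}_i]^{\epsilon_i} = 1]$ into characters of $G$ and computing the first two moments over the $2^{k'}$ sign patterns, the number of admissible sign patterns is $\gg 2^{k'}/|G|$, hence positive once $2^{k'} > |G| = h(-4d)$. Since $d \le \Delta = k^3 2^k$, the class number formula and $L(1,\chi_{-4d}) \ll \log d$ give $h(-4d) \ll k^2 2^{k/2}$, so $2^{k'}/h(-4d) \gg 2^{k/2 - k^{2/3}}/k^2 \to \infty$. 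Thus a \emph{single} good $d$ suffices, where ``good'' means: (i) $\legendre{-d}{p} \ne -1$ for every $p \mid n_1$, and (ii) the classes $[\mathfrak{p}_i]$ attached to the prime factors of $n_1$ exceeding $\Delta$ are equidistributed enough in $G$ for the lemma to apply.

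The crux, and the step I expect to be hardest, is producing such a $d$ for almost all $n$ — in particular securing (i). One cannot do this pointwise, since $\Delta$ is far smaller than $n_1$ and P\'olya--Vinogradov-type estimates are useless at this scale; instead I argue on average. Exchanging the order of summation, the first moment $\sum_{n \in \mathcal{A}(N,k)} \#\{ d \le \Delta : \legendre{-d}{p} \ne -1 \text{ for all } p \mid n_1 \}$ equals $\sum_{d \le \Delta} \#\{ n \in \mathcal{A}(N,k) : \legendre{-d}{p} \ne -1 \text{ for all } p \mid n_1 \}$, and the inner count — integers $n \le N$ with $\Omega(n) = k$ whose odd-exponent prime factors all lie in the density-$\tfrac12$ set $\{p : \legendre{-d}{p} \ne -1\}$ — is evaluated by the Landau--Selberg--Delange method as $\asymp 2^{-k} |\mathcal{A}(N,k)|$, so the first moment is $\asymp (\Delta/2^k)|\mathcal{A}(N,k)| \gg k^3 |\mathcal{A}(N,k)|$. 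A matching upper bound for the corresponding second moment — a sum over pairs $d_1, d_2 \le \Delta$, where for $d_1 \ne d_2$ the two residue conditions are essentially independent up to controlled local corrections at primes dividing $d_1 d_2$ — shows the number of admissible $d \le \Delta$ concentrates near its mean; the error terms in this concentration are precisely what produce the $|\mathcal{A}(N,k)|/\log k_0$ loss. Hence for all but $\ll |\mathcal{A}(N,k)|/\log k_0$ of $n \in \mathcal{A}(N,k)$ there are $\gg k^3$ admissible $d \le \Delta$.

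Finally I promote one admissible $d$ to a good one, i.e.\ arrange (ii), using the effective prime ideal theorem (Chebotarev) for the ring class field of $\Z[\sqrt{-d}]$: since $d \le \Delta = (\log N)^{O(1)}$, this field has degree and conductor $(\log N)^{O(1)}$, so primes up to $N$ equidistribute among the classes of $G$ with error $\ll N\exp(-c \log N/\log\log N)$, comfortably absorbing even a possible Siegel zero. Averaging this over $n$ shows that for almost all $n$, and all but a negligible proportion of its $\gg k^3$ admissible $d$, the classes $[\mathfrak{p}_i]$ of the prime factors exceeding $\Delta$ are spread enough for the lemma of the second paragraph; such a $d$ then certifies $n_1 = x^2 + dy^2$, hence $n = x^2 + dy^2$ with $d \le \Delta$. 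Collecting the discarded sets — the $\ll N k_0^{-3/4}$ integers of exceptional factorization type, the $\ll |\mathcal{A}(N,k)|/\log k_0$ lacking enough admissible $d$, and a negligible remainder — gives $|\mathcal{E}(N,k)| \ll |\mathcal{A}(N,k)|/\log k_0 + N k_0^{-3/4}$. The main obstacle throughout is the second-moment estimate of the third paragraph: controlling sums of multiplicative functions over $\{\Omega(n) = k\}$ subject to quadratic-residue side conditions, and quantifying the independence of those conditions for distinct moduli, is where all the real work lies.
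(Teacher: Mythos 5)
Your overall design --- pass to the class group $\mathrm{Cl}(-4d)$, expand the principality indicator into class group characters, and run moments --- is in the right spirit and not far from what the paper actually does. But the load-bearing step is your fourth paragraph, and it does not work as stated. Effective Chebotarev for the ring class field controls how many primes up to a given height fall in each class of $\mathrm{Cl}(-4d)$; it says nothing about the classes of the \emph{specific} prime divisors of a given $n$. For a fixed $n$ and an admissible $d$, all of the large prime factors of $n$ could fall in one ideal class, in which case some non-principal $\chi$ contributes $\pm 2^{k'}$ to $\sum_\chi \prod_i 2\Re\chi([\mathfrak{p}_i])$ and the lemma of your second paragraph gives no lower bound. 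Concretely, if $\mathrm{Cl}(-4d)\cong\Z/2\Z$ and every $[\mathfrak{p}_i]$ is the non-trivial class, then $\prod_i[\mathfrak{p}_i]^{\epsilon_i}$ is never principal when $k'$ is odd, even though every $p_i$ is admissible for $d$. So ``spread'' is a genuine hypothesis that has to be established on average over $n$ jointly with the choice of $d$, and your two-stage split cannot be decoupled: for a given $n$, the set of admissible $d$ is correlated with the classes $[\mathfrak{p}_i]$, so one cannot first count admissible $d$ and then appeal to equidistribution independently.

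The efficient way to organize this is to average the number of \emph{principal} representations directly: for each $j$ form $F_j(n) = |\mathcal{D}_j|^{-1}\sum_{d\in\mathcal{D}_j} |D|^{1/2} R_D(n)/(\pi\gamma_W\tau(\ns))$ over a restricted set of prime $d$, show that $\sum_n F_j(n) e^{-n/N}$ and $\sum_n F_j(n)^2 e^{-n/N}$ are both $\sim \sum_n e^{-n/N}$, and conclude $F_j(n)\approx 1$ for almost all $n$. Squaring $F_j$ forces an asymptotic for $\sum_n R_D(n)R_{\widetilde D}(n)\,\tau(\ns)^{-2}e^{-n/N}$ for two distinct $d,\widetilde d\in\mathcal{D}_j$; after expanding both $R_D$ and $R_{\widetilde D}$ into class group characters $\psi,\widetilde\psi$, one must show that the cross terms with $(\psi,\widetilde\psi)$ not both principal (and not both the non-trivial genus character) are negligible, which is done via Selberg's method applied to class group $L$-functions of the biquadratic field $\Q(\sqrt{D},\sqrt{\widetilde D})$. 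This off-diagonal biquadratic estimate --- not the admissibility second moment you flag in paragraph three --- is where the hard analysis lives, and it is absent from your plan. The paper's restriction to prime $d$ in fixed residue classes modulo $8$ and modulo all small primes is precisely the device that keeps the genus-theory terms under control in this expansion, and any proof along your lines will need something of the sort.
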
 

Similarly to Theorem \ref{thm1.3}, Theorem \ref{thm1.4} is really of interest only in the range \[ |k- k_0| \leq (\tfrac{1}{2} k_0 \log k_0)^{1/2},\] but this range includes most typical integers 
below $N$.   In Section 2 we shall deduce our main result Theorem \ref{mainthm} from Theorems \ref{thm1.3} and \ref{thm1.4}.  

Since our main interest is in establishing Theorem \ref{mainthm} we have made no effort to optimize the error terms and ranges for $k$ in Theorems \ref{thm1.3} 
and \ref{thm1.4}.  It would be of interest to establish analogues of these results uniformly in a wide range of $k$ (although when $k$ is large it may be better to work with 
$\omega(n)=k$, where $\omega(n)$ counts the number of distinct prime factors of $n$).   One case of particular interest may be $k=1$: representing primes up to $N$ 
using the quadratic forms $x^2 +dy^2$ with $d\leq \Delta$.  Here it would be possible to establish that a proportion $\rho(\Delta)$ of the primes up to $N$ may be so
represented with
 $\rho(1)=1/2$ (by Fermat's result on representing primes of the form $1 \ \md 4$ as a sum of two squares), $\rho(\Delta) <1$ for all $\Delta$, 
and $\rho(\Delta) \to 1$ as $\Delta \to \infty$.  Determining $\rho(\Delta)$ precisely, or understanding its precise asymptotic behavior as $\Delta$ gets large, seem like challenging and 
delicate problems.

Let us indicate very briefly the ideas behind Theorems \ref{thm1.3} and \ref{thm1.4}; here and in the rest of the introduction we shall be a little informal, and also assume that the reader is familiar with the classical theory of binary quadratic forms (which will be recalled in Section 3).  Recall that a square-free integer $n$ may be represented by some binary quadratic form of negative discriminant $D$ if and only if $\chi_D(p)= 1$ for all primes $p$ dividing $n$ (assume that $n$ is coprime to $D$).  If $n$ has $k$ prime factors, then each condition 
$\chi_D(p)=1$ has a $50\%$ chance of occurring, so that $n$ may be represented by some binary quadratic form of discriminant $D$ with probability $2^{-k}$.  This suggests 
that $\Delta$ must be about size $2^k$ in order to have a chance of representing many integers with $k$ prime factors.   This is the idea behind Theorem \ref{thm1.3}, and it 
can be made precise without too much difficulty (see Section 4).

The more difficult part of our argument is Theorem \ref{thm1.4}, which constitutes the bulk of the paper.  
If $\Delta$ is substantially larger than $2^k$, then the heuristic that we just mentioned would suggest that for most integers $n\leq N$ with $k$ prime factors there 
would be some negative discriminant $D$ with $|D|\leq \Delta$ such that $n$ is representable by some binary quadratic form of discriminant $D$, and indeed there would be a total of about $2^k$ such representations of $n$.  The number of inequivalent classes of binary quadratic forms of discriminant $D$ is the class number, which is of size $|D|^{1/2+ o(1)}$.   It is therefore likely that some of the $2^k$ (which is about $\Delta$) representations of $n$ would come from the principal form $x^2+dy^2$ (corresponding to the discriminant $D=-4d$), and indeed that there should be about $2^k/|D|^{1/2+o(1)}$ representations of $n$ by $x^2+dy^2$.  We make this heuristic precise by using  class group characters and their associated $L$-functions, together with a second moment method.   It would be relatively straightforward to obtain a version of Theorem \ref{thm1.4} where a positive proportion of the elements in ${\mathcal A}(N,k)$ are represented by the forms $x^2+dy^2$ with $d\leq \Delta$.  However, it is more delicate to obtain almost all integers in ${\mathcal A}(N,k)$, and to achieve this we impose congruence conditions on $d$ for all primes $p$ below a slowly growing parameter $W$.  A key fact is that when discriminants $d$ are restricted to such progressions, the value of $L(1,\chi_d)$ remains more or less constant.   To simplify genus theory considerations, we further restrict 
attention to prime values of $d$, but this is merely a matter of convenience.  

For $k$ sufficiently close to $k_0$, Theorems \ref{thm1.3} and \ref{thm1.4} show that the number of represented elements in ${\mathcal A}(N,k)$ undergoes 
a rapid phase transition as one goes from $\Delta = 2^k/k^4$ (when $0\%$ of ${\mathcal A}(N,k)$ is covered) to $\Delta = 2^k k^3$ (when $100\%$ of ${\mathcal A}(N,k)$ is 
covered).  While there is some scope to narrow the gap between $2^k/k^4$ and $2^k k^3$, the restriction to prime values of $d$ in our proof of Theorem \ref{thm1.4} would prevent us from fully closing this gap.   It seems likely that a more precise cutoff phenomenon occurs:  when $\Delta = \beta \sqrt{k} 2^k$ there is a proportion $p(\beta)$ of integers in 
${\mathcal A}(N,k)$ that are represented, with $0< p(\beta) <1$ for all $0 < \beta < \infty$, and with $p(\beta) \to 0$ as $\beta \to 0$ and $p(\beta) \to 1$ as $\beta \to \infty$.  Possibly our arguments, together with additional ideas taking into account genus theory, could be used to establish part of this cutoff phenomenon, and we hope that an interested reader will take up the challenge.  
 
Our discussion so far has been confined to representing almost all integers below $N$ using the forms $x^2+dy^2$ with $d\leq \Delta$.  It is natural to ask what happens if {\sl all} 
 integers below $N$ are to be represented.   Taking $x= \lfloor \sqrt{n}\rfloor$ and $y=1$ we see that $\Delta = 2\sqrt{N}$ suffices, and going beyond this trivial bound 
 already seems an interesting problem.  Since integers below $N$ have $\ll \log N/\log \log N$ distinct prime factors, extrapolating Theorem \ref{thm1.4} we may expect that 
 $\Delta =\exp( C \log N/\log \log N)$ is sufficient for some constant $C>0$.  As evidence towards this conjecture, we note that progress can be made in two weaker versions.  
 By a simple application of the pigeonhole principle one can show that every positive integer below $N$ may be represented by some non-degenerate binary quadratic form $f$ with $|\text{disc}(f)| \leq \exp(C \log N/\log \log N)$ with $C$ being any constant larger than $\log 4$.  Here non-degenerate means that the quadratic form does not factor into linear forms, or equivalently that the discriminant is not a square.   In fact, all elements of ${\mathcal A}(N,k)$ can be represented by some non-degenerate binary quadratic form 
 with absolute discriminant $\ll 4^k$ (for instance, all primes are of the form $x^2+y^2$, $x^2+ 2y^2$ or $x^2-2y^2$).  
The pigeonhole argument does not allow us to restrict attention to positive definite forms (although one can restrict attention to indefinite forms), let alone the smaller family of principal positive definite forms.  Assuming GRH for quadratic Dirichlet $L$-functions it can be shown that all integers below $N$ may be represented by some positive definite binary quadratic form with absolute discriminant below $\exp(C \log N/\log \log N)$ for any $C> \log 4$, and indeed that all elements in ${\mathcal A}(N,k)$ can be represented by such forms with absolute discriminant $\ll 4^k (\log N)^4$.
 
In the other direction, we may ask how large must $\Delta$ necessarily be if all integers $n\leq N$ are represented as $x^2 +dy^2$ with $d\leq \Delta$.   Complementing our 
 discussion above, we can establish here that $\Delta$ must be at least $\Delta_0= \exp(c \log N/\log \log N)$ for a positive constant $c$.  In fact, we can 
 establish the stronger result that there exists a square-free integer $n\leq N$ such that for any fundamental discriminant $d$ with $1 < |d|\leq \Delta_0$ 
 there exists a prime factor $p$ of $n$ with $\chi_d(p)=-1$.  Such an integer $n$ cannot be represented by any primitive non-degenerate binary quadratic form 
 with absolute discriminant below $\Delta_0$.  This result, which may be viewed as a variant of the least quadratic non-residue problem, follows from an application of log-free zero density estimates; details will be supplied elsewhere.

 Lastly, we draw attention to three papers from the literature where related problems concerning the integers represented by a family of binary quadratic forms are considered:  Blomer's work on sums of two squareful numbers \cite{blomer1}, the work of Bourgain and Fuchs \cite{BF} on Apollonian circle packings, and the work of Ghosh and Sarnak \cite{GhSa} on Markoff type cubic curfaces.  \vspace*{8pt} 
 
\emph{Notation.} For the most part notation will be introduced when it is needed. However, we remind the reader that $k_0$ will always denote $\log \log N$. From Section \ref{sec5} onwards, $W$ will denote a quantity which tends to infinity with $N$ sufficiently slowly; we will take $W := \log \log \log N$ for definiteness.\vspace*{8pt}

\emph{Plan of the paper.} Section \ref{sec2} is devoted to the proof that the upper and lower bounds (Theorems \ref{thm1.3} and \ref{thm1.4} respectively) imply the main theorem, Theorem \ref{mainthm}. Section \ref{sec3} gives some standard background on binary quadratic forms which will be used throughout the rest of the paper. In Section \ref{sec4}, we prove the relatively straightforward upper bound, Theorem \ref{thm1.3}.

The remainder of the paper is devoted to the much more involved proof of the lower bound, Theorem \ref{thm1.4}. First, we formulate a more technical variant of this result, Theorem \ref{thm5.1}. This result allows us to restrict attention to representing integers not divisible by $4$, using only quadratic forms $x^2 + dy^2$ with $d$ ranging over primes in certain congruence classes. The deduction of Theorem \ref{thm1.4} from Theorem \ref{thm5.1} is short and is given immediately after the statement of the latter. 

The proof of Theorem \ref{thm5.1} is via the second moment method. We divide the computations that arise into four separate technical propositions, Propositions \ref{prop5.2}, \ref{prop5.3}, \ref{prop5.4} and \ref{prop5.5}. The synthesis of these propositions to give a proof of Theorem \ref{thm5.1} is accomplished in Section \ref{sec6}. 

The final sections of the main part of the paper are devoted to the proofs of these four technical propositions. Proposition \ref{prop5.5} is a statement about averages of certain $L(1,\chi)$ and we handle it first, in Section \ref{sec7}. The remaining three results all require some background on class group $L$-functions, and Section \ref{sec8-class-group} provides an overview and references for the necessary material. Finally, the proofs of Propositions \ref{prop5.2}, \ref{prop5.3} and \ref{prop5.4} are given in Sections \ref{sec9}, \ref{sec10} and \ref{sec11} respectively.

Sections \ref{sec8-class-group}, \ref{sec9} and \ref{sec10} use Selberg's techniques \cite{selberg-54}. There is no particularly convenient reference for what we require, so we provide full details. The more standard parts of this may be found in Appendix \ref{app-A}.\vspace*{8pt}

\emph{Acknowledgments.} This work began at the 2022 Oberwolfach Analytic Number Theory meeting, and it is a pleasure to thank the Mathematisches Forschungsinstitut Oberwolfach for the stimulating working conditions.   BG is supported by a Simons Investigator grant and is grateful to the Simons Foundation for their continued support.  KS is supported in part by a Simons Investigator 
award from the Simons foundation, and a grant from the National Science Foundation.

\section{The upper and lower bounds imply the main theorem} \label{sec2}

In this section we show how Theorem \ref{mainthm} follows from Theorems \ref{thm1.3} and \ref{thm1.4}.

Suppose, as in the statement of Theorem \ref{mainthm}, that $\dmax = (\log N)^{\log 2} 2^{\alpha \sqrt{\log \log N}}$. It is enough to prove the result for 
\begin{equation}\label{alpha-bdd} |\alpha |\leq (\log \log N)^{1/10};
\end{equation} 
the result for all $\alpha$ follows from this case and the fact that \[ \Phi(-(\log \log N)^{1/10}) = o(1), \quad \Phi((\log \log N)^{1/10}) = 1 - o(1).\] 
Suppose henceforth that \eqref{alpha-bdd} holds.

Let $k^-$ be defined as the solution to $\Delta = k^3 2^k$, and $k^+$ as the solution to $\Delta = 2^k/k^4$.  Then, one may check that   
\[ 
k^+,   k^- =  \frac{\log\Delta}{\log 2} + O(\log \log \Delta)  =  k_0 + \alpha \sqrt{k_0} + O(\log k_0).
\]
In particular, by the assumption \eqref{alpha-bdd}, we see that $|k^{\pm} - k_0| \leq 2 k_0^{3/5}$.

For $k$ in the range $k_0-2 k_0^{3/5} \leq k \leq k^-$, Theorem \ref{thm1.4} shows that the number of integers in ${\mathcal A}(N,k)$ 
that may be represented as $x^2+dy^2$ with $d\leq \Delta$ is 
\begin{equation} 
\label{2.3} 
|{\mathcal A}(N,k)| \Big( 1+ O\Big(\frac{1}{\log k_0}\Big) \Big) + O\big(N k_0^{-3/4}\big). 
\end{equation} 
Stirling's formula 

and the approximation $1 -x = \exp(-x -\frac{x^2}{2} + O(x^3))$ with $x = 1 - \frac{k_0}{k}$ ($ = O(k_0^{-2/5})$) shows in this range of $k$ that 
\begin{equation}\label{stirling-3}
\frac{k_0^k}{k!} = \frac{1}{\sqrt{2\pi k_0}} \exp\Big( k_0 - \frac{(k_0-k)^2}{2k_0} + O\big( k_0^{-1/5}\big) \Big), 
\end{equation}
so that using Lemma \ref{lem1.2}, we may see that the quantity in \eqref{2.3} is 
$$
 \frac{N}{\sqrt{2\pi k_0} } \exp\Big( -\frac{(k-k_0)^2}{2k_0}\Big) \Big(1 + O\Big( \frac{1}{\log k_0} \Big) \Big) + O\big(N k_0^{-3/4}\big). 
$$
Summing over all $k$ in this range, we conclude that the number of integers $n\leq N$ that may be written as $x^2+dy^2$ with $d\leq \Delta$ is at least 
$$ 
\frac{N}{\sqrt{2\pi k_0}} \sum_{k_0 - 2k_0^{3/5} \leq k \leq k^- } \exp\Big( - \frac{(k-k_0)^2}{2k_0}\Big) + O\Big(\frac{N}{\log k_0} \Big) 
= (\Phi(\alpha) +o(1)) N, 
$$ 
upon approximating the sum by the corresponding integral.  This shows the lower bound implicit in Theorem \ref{mainthm}.  

To obtain the corresponding upper bound, 
note that for $k$ in the range $k^+ \leq k \leq k_0+2k_0^{3/5}$, Theorem \ref{thm1.3} shows that the number of integers in ${\mathcal A}(N,k)$ 
that cannot be represented as $x^2+dy^2$ with $d\leq \Delta$ is 
$|{\mathcal A}(N,k)| + O(N/k_0)$.  Using Lemma \ref{lem1.2} and Stirling's formula as above, this is 
$$ 
 \frac{N}{\sqrt{2\pi k_0} } \exp\Big( -\frac{(k-k_0)^2}{2k_0}\Big) \Big(1 + O\big( k_0^{-1/5} \big) \Big) + O\big(Nk_0^{-1}\big). 
$$
Summing over all $k$ in this range, we conclude the number of integers up to $N$ that cannot be represented as 
$x^2+dy^2$ with $d\leq \Delta$ is at least 
$$ 
\frac{N}{\sqrt{2\pi k_0}} \sum_{k^+ \leq k \leq k_0 +2k_0^{3/5} } \exp\Big( - \frac{(k-k_0)^2}{2k_0}\Big) + O\big(N k_0^{-1/5} \big) 
= (1-\Phi(\alpha) +o(1)) N. 
$$ 
This implies the upper bound implicit in Theorem \ref{mainthm}, and completes the proof. 

\section{Background on quadratic forms} \label{sec3}

For the theory in the rest of this section, good resources are \cite{cox}, \cite{davenport},  \cite[Chapter 22]{IwKo}, or 
\cite{zagier}.

\subsection{Fundamental discriminants and characters} 
A fundamental discriminant is an integer $D$ of the following type:  either $D \equiv 1 \ \md{4}$ and squarefree, or 
(ii) $D = 4m$ with $m \equiv 2,3 \  \md{4}$ and $m$ squarefree.   Apart from $D = 1$, these are precisely the discriminants of quadratic fields over $\Q$, and indeed the discriminant of $\Q(\sqrt{D})$ is $D$.  Equivalently, if $m$ is square-free, the quadratic field $\Q(\sqrt{m})$ 
has discriminant $4m$ if $m \equiv 2,3 \ \md{4}$ and $m$ if $m \equiv 1\  \md{4}$.  

Associated to the fundamental discriminant $D$ is the primitive quadratic Dirichlet character $\chi_{D}(n) = (\frac {D}{n})$, where the symbol here is the Kronecker symbol. 
This is defined to be completely multiplicative, and specified on the primes by the following:
\begin{itemize}
\item If $p$ is an odd prime, $\chi_{D}(p)= (\frac{D}{p})$ is the Legendre symbol;
\item $\chi_{D}(2) = 0$ if $D \equiv 0 \ \md{4}$, $1$ if $D \equiv 1 \ \md{8}$ and $-1$ if $D \equiv 5\  \md{8}$;
\item $\chi_{D}(-1) = \mbox{sgn}(D)$.
\end{itemize}

The Kronecker symbol $\chi_{D}$ is a primitive character of modulus $|D|$. It describes the splitting type of a prime $p$ in the quadratic field $K=\Q(\sqrt{D})$:  a prime $p$ splits in $\Q(\sqrt{D})$ if $(\frac{D}{p})=1$, remains inert if $(\frac{D}{p}) =-1$, and ramifies when  $(\frac{D}{p})=0$.  Thus the Dedekind zeta-function of the field $K$ is given by 
$$ 
\zeta_K(s) = \sum_{\mathfrak{a} \neq 0} (N \mathfrak{a})^{-s} = \zeta(s) L(s,\chi_D), 
$$ 
and the number of ideals in $\mathcal{O}_K$ of norm $n$ is 
$$ 
(1 \ast \chi_{D})(n) = \sum_{\ell | n} \chi_{D}(\ell). 
$$

\subsection{Positive definite forms and imaginary quadratic fields} \label{sec3.2} 

Let $D < 0$ be a negative fundamental discriminant, and let $K= \Q(\sqrt{D})$ denote the corresponding imaginary quadratic field.  

There is a well-known correspondence (going back to Gauss) between ideal classes in $K$ and equivalence classes of positive definite binary quadratic forms of discriminant $D$.  In particular, principal ideals in $K$ are in correspondence with the principal binary quadratic form given by $x^2 + \frac {|D|}4 y^2$ (in the case $D\equiv 0 \ \md 4$) and $x^2 + xy + \frac{1+|D|}{4} y^2$ (in the case $D \equiv 1 \ \md 4$, so that $|D|=-D \equiv 3 \ \md 4$).  

A key object of interest for us is 
\[
R_D (n) = \# \{ {\mathfrak a}:  N(\mathfrak a) =n, \; {\mathfrak a} \text{ principal } \} 
\]
which counts the number of principal ideals in ${\mathcal O}_K$ of norm $n$.  If $D\equiv 0 \ \md 4$, then a principal ideal ${\mathfrak a}$ of norm $n$ may be written as 
$(a+b\sqrt{D/4})$ and corresponds to two representations of $n$ by the principal form $x^2+ \frac{|D|}{4} y^2$, namely $n= (\pm a)^2 + \frac{|D|}{4} (\pm b)^2$ (with the exception of $D=-4$, where it corresponds to $4$ representations by the principal form $x^2+y^2$).   Similarly if $D\equiv 1\ \md 4$, a principal ideal ${\mathfrak a}$ of norm $n$ may be written as $(a + b \frac{1+\sqrt{D}}{2})$ and corresponds to two representations of $n$ by the principal form $x^2 +xy + \frac{1+|D|}{4} y^2$ (with the exception of $D=-3$, where it corresponds to $6$ representations by the principal form $x^2+xy+y^2$).  

We remark that 
\begin{equation} 
\label{3.2} R_D(n) \leq (1*\chi_D)(n),
\end{equation} 
since $(1*\chi_D)(n)$ counts all ideals with norm $n$, and that each ideal of norm $n$ corresponds to two (or $4$ when $D=-4$, or $6$ when $D=-3$) representations of $n$ by some equivalence class of binary quadratic forms of discriminant $D$.

To isolate the principal ideals of norm $n$, we shall use class group characters.  Let $C_K$ denote the ideal class group of $K$, and denote by $h_K$ its size which is 
the class number of $K$.  A class group character is a homomorphism $\psi: C_K \to \C^{\times}$.  We may think of such class group characters as maps 
$$ 
\psi: \{ \text{ non-zero ideals in } {\mathcal O}_K \} \to \C^{\times} 
$$ 
satisfying $\psi({\mathfrak a }{\mathfrak b}) = \psi({\mathfrak a}) \psi({\mathfrak b})$ and $\psi((\lambda)) = 1$ for every non-zero principal ideal $(\lambda)$.  We denote the dual group of class group characters by ${\widehat C}_K$.

If $\psi \in {\widehat C}_K$ is a class group character, then we define 
\begin{equation} 
\label{3.3} 
r(n,\psi) = r(n,\psi; D) = \sum_{N(\mathfrak a)=n} \psi({\mathfrak a}). 
\end{equation} 
Notice that ${\widehat C}_K$ always includes the principal character $\psi_0$ given by $\psi_0(\mathfrak a) =1$ for all ideals ${\mathfrak a}$.  In this case 
\begin{equation} 
\label{3.4} 
r(n,\psi_0) = \sum_{N(\mathfrak a)=n} 1 = (1*\chi_D)(n). 
\end{equation} 
The orthogonality relations for characters now allow us to express $R_D(n)$ in terms of $r(n,\psi)$: namely, 
\begin{equation} 
\label{3.5} 
R_D(n) = \frac{1}{h_K} \sum_{\psi \in {\widehat C}_K} r(n, \psi). 
\end{equation} 
With these preliminaries in place, we postpone a more detailed discussion of class group characters to Section \ref{sec8-class-group}.

\subsection{Representation by $x^2+dy^2$}  We now relate the concepts of the previous section to our specific problem of representing integers by the quadratic forms $x^2+dy^2$.  We will restrict attention to square-free integers $d$, which is sufficient for our purposes.  The problem of representing integers by $x^2+dy^2$ is naturally related to arithmetic in the field $K=\Q(\sqrt{-d}) = \Q(\sqrt{D})$ where $D$ denotes the fundamental discriminant 
\begin{equation} 
\label{3.6} 
D = \begin{cases} 
 -4d &\text{ if } d \equiv 1,2 \ \md{4}, \\ 
 -d & \text{ if } d \equiv 3\  \md{4}. 
 \end{cases}
 \end{equation}
Henceforth in the paper, we will adopt the following notational conventions. Unless explicitly stated otherwise, whenever we write $d$ we have in mind a square-free integer,  and 
corresponding to such $d$ will be the fundamental discriminant $D$ given in \eqref{3.6}, and the imaginary quadratic field $K = \Q(\sqrt{-d}) = \Q(\sqrt{D})$.
 Of course, $K$ and $D$ depend on $d$, but we will not indicate this explicitly. Sometimes we will additionally have a second positive square-free number $d'$, and $K', D'$ will be associated to it in the same way.

\begin{lemma}\label{lem3.1} Let $d \ge 1$ be square-free, and let $D, K$ be associated to $d$ as above. 
\begin{enumerate}
\item If $d \equiv 1$ or $2 \ \md{4}$, then the number of representations of $n$ by the quadratic form $x^2 + dy^2$ equals $2R_D(n)$, with the 
exception of the special case $d=1$ where it equals $4 R_{-4}(n)$.  
\item If $d\equiv 3\ \md{4}$ then the number of representations of $n$ by the quadratic form $x^2+dy^2$ is at most $2 R_D(n)$, with the exception of the special 
case $d=3$ where it is at most $6R_{-3}(n)$.   
\item If $d \equiv 7 \  \md{8}$ and $n$ is odd, then the number of representations of $n$ by the quadratic form $x^2+dy^2$ equals $2R_D(n)$.  
\end{enumerate}
\end{lemma}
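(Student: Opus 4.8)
The plan is to relate representations of $n$ by $x^2 + dy^2$ to principal ideals in $\mathcal{O}_K$ via the standard dictionary recalled in Section~\ref{sec3.2}, being careful about the difference between the form $x^2 + dy^2$ and the \emph{principal} form of discriminant $D$. The three cases split according to how $d$ sits relative to the fundamental discriminant $D$, so I would organize the proof around the formula \eqref{3.6}.

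First, for case (1): if $d \equiv 1,2 \pmod 4$, then $D = -4d$, so $D \equiv 0 \pmod 4$ and the principal form of discriminant $D$ is $x^2 + \frac{|D|}{4}y^2 = x^2 + dy^2$ exactly. Thus the representations of $n$ by $x^2 + dy^2$ are precisely the representations by the principal form, and the discussion in Section~\ref{sec3.2} (a principal ideal $(a + b\sqrt{D/4})$ of norm $n$ yielding the two representations $n = (\pm a)^2 + \frac{|D|}{4}(\pm b)^2$) gives that this count equals $2 R_D(n)$, with the stated exception $d = 1$ (i.e. $D = -4$) where each principal ideal gives $4$ representations, so one gets $4 R_{-4}(n)$. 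One should note the map ideal $\mapsto$ representation is a genuine bijection here (up to the sign ambiguities already accounted for by the factors $2$ or $4$), since $a + b\sqrt{-d}$ is determined by $(a,b)$ and conversely.

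For case (2): if $d \equiv 3 \pmod 4$, then $D = -d$ and the principal form of discriminant $D$ is $x^2 + xy + \frac{1+|D|}{4}y^2$, which is \emph{not} equal to $x^2 + dy^2$ — the latter has discriminant $-4d = 4D$, so it is an imprimitive form (or equivalently corresponds to the non-maximal order of conductor $2$). Every representation $n = x^2 + dy^2 = x^2 + dy^2$ gives the element $x + y\sqrt{-d} \in \mathbb{Z}[\sqrt{-d}] \subseteq \mathcal{O}_K$ of norm $n$, hence a principal ideal of $\mathcal{O}_K$ of norm $n$; but not every principal ideal arises this way (only those with a generator in the suborder $\mathbb{Z}[\sqrt{-d}]$), and the map to ideals is at most $2$-to-$1$ (or $6$-to-$1$ when $d = 3$, $D = -3$, because of the extra units). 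Hence the number of representations is \emph{at most} $2 R_D(n)$, respectively $6 R_{-3}(n)$ — the inequality \eqref{3.2}-style reasoning, combined with injectivity of $(x,y) \mapsto x + y\sqrt{-d}$ and the unit count.

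For case (3): this refines case (2) under the extra hypotheses $d \equiv 7 \pmod 8$ and $n$ odd, where we claim equality $2 R_D(n)$. The point is that when $d \equiv 3 \pmod 4$, $\mathcal{O}_K = \mathbb{Z}[\frac{1+\sqrt{-d}}{2}]$, and a principal ideal of \emph{odd} norm $n$ always has a generator in $\mathbb{Z}[\sqrt{-d}]$: writing a generator as $\frac{a + b\sqrt{-d}}{2}$ with $a \equiv b \pmod 2$, if $a,b$ were both odd the norm would be $\frac{a^2 + db^2}{4}$ with $a^2 + db^2 \equiv 1 + d \equiv 0 \pmod 8$ when $d \equiv 7 \pmod 8$, forcing the norm to be even — contradiction. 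So $a,b$ are both even, the generator lies in $\mathbb{Z}[\sqrt{-d}]$, and the correspondence becomes a bijection again, giving exactly $2R_D(n)$ (the unit group is $\{\pm 1\}$ since $d \geq 7 > 3$).

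The main obstacle, and the step requiring the most care, is case (2)/(3): correctly handling the suborder $\mathbb{Z}[\sqrt{-d}]$ versus the maximal order $\mathcal{O}_K$, verifying that distinct generators in $\mathbb{Z}[\sqrt{-d}]$ of the same ideal differ by a unit of $\mathcal{O}_K$ (so the $2$-to-$1$, or $6$-to-$1$ for $d=3$, bound is exactly the unit count), and in case (3) running the parity/congruence argument mod $8$ to show every odd-norm principal ideal does have a generator in the suborder. The other cases are essentially a direct transcription of the material already set up in Section~\ref{sec3.2}.
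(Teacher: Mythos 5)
Your argument is correct but takes a route different in presentation from the paper's, though the two are closely parallel. For case (1) you and the paper agree verbatim: $x^2+dy^2$ is literally the principal form when $D = -4d$, and the factors $2$ (or $4$ when $D=-4$) come from counting unit multiples of a generator.

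For cases (2) and (3), the paper stays entirely on the quadratic-forms side. It uses the explicit identity
\[
x^2 + dy^2 = (x-y)^2 + (x-y)(2y) + \tfrac{1+d}{4}(2y)^2,
\]
which puts representations of $n$ by $x^2+dy^2$ in bijection with representations of $n$ by the principal form $X^2 + XY + \tfrac{1+d}{4}Y^2$ having $Y$ even. The upper bound in (2) follows immediately from the fact that the total count (with no parity restriction on $Y$) is $2R_D(n)$ (or $6R_{-3}(n)$). For (3), the paper observes that when $d\equiv 7\ \md 8$ the coefficient $\tfrac{1+d}{4}$ is even, so a representation with $Y$ odd would force $n$ even; hence for odd $n$ the parity restriction is automatic and the bound becomes an equality.

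You work instead on the algebraic side, with elements $x+y\sqrt{-d}$ of the suborder $\Z[\sqrt{-d}]\subset\mathcal{O}_K$ and the observation that each principal ideal has exactly $w_K$ generators (hence the $2$- or $6$-to-$1$ bound), and for (3) you compute norms mod $8$ to show that an odd-norm principal ideal must have a generator already in $\Z[\sqrt{-d}]$. These two arguments are really the same under the change of basis $x+y\sqrt{-d} = X + Y\cdot\frac{1+\sqrt{-d}}{2}$ with $X=x-y$, $Y=2y$: ``$Y$ even'' is exactly ``the generator lies in $\Z[\sqrt{-d}]$''. The paper's version has the advantage of being entirely elementary (no discussion of orders or conductors is needed); yours makes the conceptual reason --- the distinction between the maximal order and $\Z[\sqrt{-d}]$ --- more visible. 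Either is acceptable; just make sure, if you write up the suborder version, to state explicitly that distinct $(x,y)$ give distinct elements of $\Z[\sqrt{-d}]$, so that the only collapsing in the map to ideals comes from the $w_K$ units.
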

\begin{proof}  If $d\equiv 1, 2 \ \md 4$, we have $D= -4d$, and the quadratic form $x^2 +dy^2$ is the principal form of discriminant $D$.  The result (1) now 
follows from our discussion in Section \ref{sec3.2}.  

If $d\equiv 3 \ \md 4$ then $D=-d$, and the principal form of discriminant $D$ is $x^2+xy + \frac{1+d}{4} y^2$.  The identity 
$$ 
x^2 + dy^2 = (x-y)^2 + (x-y)(2y) + \frac{1+d}{4} (2y)^2
$$ 
shows that the representations of $n$ as $x^2+dy^2$ are in bijective correspondence with the representations of $n$ as $X^2 + XY + \frac{1+d}{4} Y^2$ with $Y$ even.  
Since the total number of representations of $n$ as $X^2+XY +\frac{1+d}{4} Y^2$ (ignoring whether $Y$ is even or odd) equals $2R_D(n)$ (or $6R_{-3}(n)$ in the exceptional case $d=-3$), the upper bound stated in (2) follows. 

Finally, if $d\equiv 7 \ \md 8$ and $n$ is odd, then $\frac{1+d}{4}$ is even and so any representation of $n$ as $X^2+XY+ \frac{1+d}{4} Y^2$ must necessarily have $Y$ being even.  
Thus in this case the representations of $n$ by $X^2+XY + \frac{1+d}{4} Y^2$ equal the representations of $n$ by $x^2+dy^2$, and assertion (3) follows.  
\end{proof}

 \section{Proof of the upper bound} \label{sec4}

In this section we prove Theorem \ref{thm1.3}. It will follow from the following proposition. 
 
 \begin{proposition} \label{prop4.1}  Let $N$ be large, and $k$ an integer in the range \eqref{1.3}.   Let $d$ be a square-free integer with $d\leq \log N$.  
 Then the number of integers $n \in {\mathcal A}(N,k)$ that are represented by $x^2+dy^2$ is $\ll \frac{N}{2^k} (\log \log N)^3$.
  \end{proposition}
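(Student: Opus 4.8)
The plan is to bound the number of $n \in \mathcal{A}(N,k)$ represented by $x^2+dy^2$ by counting, with multiplicity, all representations and then dividing by a lower bound for the number of representations of each represented integer. More precisely, if $n = x^2 + dy^2$ then $n$ is represented by the principal form of discriminant $D$ (where $D$ is as in \eqref{3.6}), so by Lemma~\ref{lem3.1} and \eqref{3.2} we have $R_D(n) \geq 1$, hence $(1 * \chi_D)(n) \geq 1$. The key arithmetic input is that $(1 * \chi_D)(n)$ is multiplicative and that for a prime $p \nmid D$ with $\chi_D(p) = 1$ one has $(1 * \chi_D)(p^a) = a+1 \geq 2$, while $(1 * \chi_D)(p^a) = 0$ whenever $\chi_D(p) = -1$ and $a$ is odd. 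So if $n$ is represented and squarefree away from $D$, each of its $\Omega(n) = k$ (or $k$ minus the few prime factors dividing $D$) prime factors $p$ must satisfy $\chi_D(p) \neq -1$, and $(1*\chi_D)(n) \geq 2^{k - O(\log d / \log\log d)} = 2^{k-o(k)}$ after accounting for the $O(\log d)$ primes dividing $D$ and possible repeated prime factors, which are few for typical $n$.

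Thus I would write
$$ \#\{ n \in \mathcal{A}(N,k) : n = x^2 + dy^2 \} \ll 2^{-k} \big(\log\log N\big)^{O(1)} \sum_{\substack{n \leq N \\ \Omega(n) = k}} (1 * \chi_D)(n) + (\text{contribution of non-typical } n), $$
where the "non-typical" $n$ are those with a large prime-power divisor (say $p^a \| n$ with $p^a$ large, or many repeated prime factors); these are easily seen to be $O(N/2^k)$ or smaller by a crude count, using $d \leq \log N$ so that the primes dividing $D$ contribute a bounded factor. For the main term, I would invoke a standard upper bound for sums of multiplicative functions (e.g.\ Shiu's theorem, or a Selberg--Delange / Rankin-type estimate), noting that $1 * \chi_D$ has average value $L(1, \chi_D) = O(\log d) = O(\log\log N)$ on primes in an averaged sense, and more precisely $\sum_{n \leq N} (1*\chi_D)(n) \ll N L(1,\chi_D) \ll N \log\log N$; restricting to $\Omega(n) = k$ with $k$ near $\log\log N$ costs at most a further $(\log\log N)^{O(1)}$ factor via the usual $\frac{(\log\log N)^k}{k! \log N} \cdot (\log N)$ heuristic made rigorous by Selberg--Sathe type results. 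Combining, the main term is $\ll \frac{N}{2^k} (\log\log N)^{O(1)}$, and tracking the exponents carefully gives the stated power $3$.

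The main obstacle is controlling the two sources of loss precisely enough to land on the exponent $3$: first, the $O(\log d)$ primes dividing $D$ — since $d$ can be as large as $\log N$, this is $O(\log\log N)$ primes, and one must check they only cost a bounded multiplicative constant rather than a power of $\log\log N$ in the final bound (this works because dividing out $D$ only removes prime factors, and a prime $p \mid D$ contributes at most $(1*\chi_D)(p^a) \leq a+1$, which on average over $n$ is harmless); second, and more seriously, the restriction $\Omega(n) = k$ interacts with the multiplicative function $1*\chi_D$, and one needs a version of the sum estimate that is simultaneously uniform in $d \leq \log N$ and restricted to a thin set $\mathcal{A}(N,k)$. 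I expect the cleanest route is to write $1 * \chi_D = \sum_{n} \cdots$ as a Dirichlet series $\zeta(s) L(s,\chi_D)$, insert the constraint $\Omega(n) = k$ via an extra variable $z^{\Omega(n)}$ (so one works with $\sum_n (1*\chi_D)(n) z^{\Omega(n)} n^{-s}$, whose Euler product is $\prod_p (1 - z/p^s)^{-1}(1 - z\chi_D(p)/p^s)^{-1}$ up to bounded factors), and then apply a Selberg--Delange analysis or a direct Rankin-type argument with $n^{-s} \to N^{1/\log N}$-style truncation, extracting the coefficient of $z^k$ by a contour integral or by choosing $z$ optimally. The uniformity in $d$ is ensured by the fact that $L(1,\chi_D) \ll \log d \ll \log\log N$ unconditionally (Landau), with no need for zero-free regions since we only want an upper bound.
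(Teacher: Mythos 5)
Your core idea --- that a represented $n$ satisfies $(1*\chi_D)(n)\approx 2^{k}$, so one can count representations with multiplicity and divide --- is exactly the paper's strategy. But the first displayed inequality in your proposal, where you divide pointwise by $2^{k-O(\log d/\log\log d)}$ and claim the loss is $(\log\log N)^{O(1)}$, does not hold as stated: with $d$ as large as $\log N$ one can have $\omega(D)\gg \log\log N/\log\log\log N$, and $2^{\omega(D)}=\exp\bigl(c\log\log N/\log\log\log N\bigr)$ exceeds \emph{every} fixed power of $\log\log N$. So the pointwise lower bound $(1*\chi_D)(n)\geq 2^{k-\omega(D)}$ is genuinely too weak to yield the exponent $3$ (or any fixed exponent). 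You correctly sense that the primes dividing $D$ are ``harmless on average over $n$,'' but that averaging has to be built into the inequality itself rather than appealed to afterwards. The paper does this by factoring $n=rs$ with $s$ squarefree and coprime to $D$, and $r$ supported on primes dividing $D$ or occurring to exponent $\geq 2$; for represented $n$ one has the exact identity $2^{-\Omega(s)}(1*\chi_D)(s)=1$, whence the count is at most $2^{-k}\sum_{rs\in\mathcal{A}(N,k)}2^{\Omega(r)}(1*\chi_D)(s)$, and the loss from $D$ appears only through $\sum_{r}2^{\Omega(r)}/r\ll k\prod_{p\mid D}(1+2/p)\ll k(\log\log|D|)^2$, which is where the averaging is cashed in. You would need to supply this step (or an equivalent one) to make your first display correct.

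Two smaller points. First, your claim that the ``non-typical'' $n$ (large prime-power divisors) contribute $O(N/2^k)$ is not quite right: the prime $2$ alone contributes a factor $\asymp k$ (in the paper this is visible as the term $\sum_{j\leq k}2^j/2^j=k$ in the Euler factor at $p=2$), though this is still absorbed by the allowed $(\log\log N)^3$. Second, the Selberg--Delange/contour-integral machinery you propose for handling the constraint $\Omega(n)=k$ is unnecessary: once the factor $2^{-k}$ has been extracted using $\Omega(r)+\Omega(s)=k$, one simply drops the constraint by nonnegativity and uses the elementary bound $\sum_{s\leq x}(1*\chi_D)(s)\ll x\log|D|$ (hyperbola method plus the fact that $\chi_D$ has mean zero over a period). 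Uniformity in $d\leq\log N$ is then automatic.
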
 
 
 Before proving the proposition, let us deduce Theorem \ref{thm1.3}.   Note that if $d=d_1 d_2^2$ with $d_1$ square-free, then an integer represented by $x^2 +dy^2$ 
 is automatically represented by $x^2 +d_1 y^2$.  
 
 Using Proposition \ref{prop4.1}, it follows that the number of integers in ${\mathcal A}(N,k)$ that are represented by $x^2+dy^2$ for some $d$, $1\leq d\leq \Delta$ is 
$$ 
\ll  \Delta \frac{N}{2^k} (\log \log N)^3 \ll N (\log \log N)^{-1}, 
$$ 
since $\Delta \leq 2^k/k^4$.  This establishes Theorem \ref{thm1.3}. \vspace*{8pt}

To prove Proposition \ref{prop4.1} we require the following simple lemma. 
 \begin{lemma} \label{lem4.3}  Let $D$ be any fundamental discriminant apart from $D=1$.   For all $x\geq 1$ we have $\sum_{n\leq x} (1*\chi_D)(n) \ll x\log |D|$. 
  \end{lemma}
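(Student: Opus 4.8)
The plan is to convert this divisor-type sum into a sum over $\ell$ and reduce everything to a uniform (in $D$) bound on the partial sums $\sum_{\ell \le x} \chi_D(\ell)/\ell$. Writing $(1*\chi_D)(n) = \sum_{\ell \mid n}\chi_D(\ell)$ and interchanging the order of summation,
$$\sum_{n\le x}(1*\chi_D)(n) = \sum_{\ell\le x}\chi_D(\ell)\Big\lfloor\frac{x}{\ell}\Big\rfloor = x\sum_{\ell\le x}\frac{\chi_D(\ell)}{\ell} + O(x),$$
the error term coming from replacing $\lfloor x/\ell\rfloor$ by $x/\ell$ and using $|\chi_D(\ell)|\le 1$. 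Since $D\ne 1$ is a fundamental discriminant we have $|D|\ge 3$, so $\log|D|\gg 1$ and the $O(x)$ is absorbed into $O(x\log|D|)$; thus it suffices to prove $\sum_{\ell\le x}\chi_D(\ell)/\ell \ll \log|D|$ uniformly for all $x\ge 1$.

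For this, set $S(t) = \sum_{\ell\le t}\chi_D(\ell)$. Because $\chi_D$ is a primitive (hence non-principal) character modulo $|D|$, it has period $|D|$ and sums to zero over any complete residue system; splitting $[1,t]$ into complete blocks of length $|D|$ together with a remainder of length $<|D|$ gives the elementary bound $|S(t)| \le \min(t,|D|)$. (The Pólya–Vinogradov inequality $|S(t)|\ll \sqrt{|D|}\log|D|$ would work equally well, but is more than we need.) Partial summation then yields
$$\sum_{\ell\le x}\frac{\chi_D(\ell)}{\ell} = \frac{S(x)}{x} + \int_1^x\frac{S(t)}{t^2}\,dt,$$
where the boundary term is $O(1)$, the portion of the integral over $[1,|D|]$ is $\ll \int_1^{|D|}t^{-1}\,dt = \tfrac12\log|D|$, and the portion over $[|D|,x]$ (empty when $x<|D|$) is $\ll |D|\int_{|D|}^{\infty}t^{-2}\,dt = O(1)$. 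Adding these and using $\log|D|\gg 1$ gives $\sum_{\ell\le x}\chi_D(\ell)/\ell \ll \log|D|$, and combining with the first display completes the proof.

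This is a routine estimate with no genuine obstacle; the only point requiring a little care is that the bound must hold uniformly over the entire range $x\ge 1$ — in particular for $x$ much larger than $|D|$ — which is precisely what forces the truncated bound $|S(t)|\le\min(t,|D|)$ and the splitting of the integral at $t=|D|$. (Alternatively, one could write $\sum_{\ell\le x}\chi_D(\ell)/\ell = L(1,\chi_D) + O(1)$ for $x\ge|D|$ using the same character-sum input and quote the standard bound $L(1,\chi_D)\ll\log|D|$, handling $x<|D|$ by the trivial estimate $\sum_{\ell\le x}1/\ell\ll\log x<\log|D|$.)
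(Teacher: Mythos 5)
Your proof is correct. You replace the paper's two-case argument (a trivial divisor bound for $x \le |D|^2$, and a split of the divisor sum at $b = |D|$ for $x > |D|^2$) by a single, uniform argument: swap the order of summation to get $\sum_{\ell \le x}\chi_D(\ell)\lfloor x/\ell\rfloor$, peel off the $O(x)$ error, and then bound $\sum_{\ell \le x}\chi_D(\ell)/\ell \ll \log|D|$ by partial summation using $|S(t)| \le \min(t,|D|)$. The underlying input — that a non-principal character sums to zero over a complete period, giving the truncated bound on $S(t)$ — is exactly the same as in the paper, but the partial-summation packaging avoids the $x \le |D|^2$ versus $x > |D|^2$ dichotomy and is arguably cleaner, at the small cost of producing a crude (but sufficient) $O(x)$ term up front. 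One trivial slip: $\int_1^{|D|} t^{-1}\,dt = \log|D|$, not $\tfrac12\log|D|$; since you only need $\ll\log|D|$ this has no effect on the argument.
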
 
  \begin{proof}  Suppose first that $x\leq |D|^2$.  Since $(1*\chi_D)(n) \leq \tau(n)$ (the number of divisors of $n$), the sum in question is 
  $\leq \sum_{n\leq x} \tau(n) \ll x \log (x+1) \ll x\log |D|$.  
  
  Now suppose that $x> |D|^2$, and note that 
  $$ 
  (1*\chi_D)(n) = \sum_{ab =n} \chi_D(b) = \sum_{\substack{ab =n \\ b \leq |D|}}\chi_D(b) + \sum_{\substack{ab=n \\ b>|D|}} \chi_D(b).
  $$ 
 Therefore 
 \begin{equation} 
 \label{4.1}  
 \sum_{n\leq x} (1*\chi_D)(n) = \sum_{b\leq |D|} \chi_D(b) \sum_{a\leq x/b} 1 + \sum_{a\leq x/|D|} \sum_{|D| < b\leq x/a} \chi_D(b). 
 \end{equation} 
  The first term on the right side of \eqref{4.1} contributes 
  $$ 
  \sum_{b\leq |D|} \chi_D(b) \Big( \frac{x}{b}+O(1)\Big) \ll \sum_{b\leq |D|} \Big( \frac{x}{b}+1\Big) \ll x \log |D|.
  $$ 
  Since $\chi_D$ is a non-principal character to the modulus $|D|$, it sums to zero over any interval of length $D$, and therefore $|\sum_{|D| < b\leq x/a} \chi_D(b)| \leq |D|$. It follows that the 
 second term on the right side of \eqref{4.1} contributes $\ll |D| \sum_{a\leq x/|D|} 1 \ll x$, and the lemma follows.
  \end{proof}

 \begin{proof}[Proof of Proposition \ref{prop4.1}] Let $d$ be square-free with $d \leq \log N$, and let $D$ be the fundamental discriminant associated to it (as given in \eqref{3.6}). Write 
 
 $\mathcal{R} = \mathcal{R}(d)$ for the set of all $r$ such that the primes dividing $r$ either divide $|D|$ or appear to exponent at least $2$ in the prime factorization of $r$. Suppose $n \in {\mathcal A}(N,k)$ is an integer that can be expressed as $x^2+dy^2$.  
 Write $n$ uniquely as $rs$, where $(r,s)=1$, $s$ is square-free and composed of primes not dividing $|D|$, and $r \in \mathcal{R}$. We have that $\Omega(r) \leq k$, and note that $\Omega(s) = k-\Omega(r)$.  
 
 By Lemma \ref{lem3.1} and \eqref{3.2} we know that if $n$ is representable by $x^2+dy^2$ then $(1*\chi_D)(n) \geq R_D(n) >0$.  Since $(1*\chi_D)$ is a non-negative multiplicative function, it follows that $(1*\chi_D)(s)> 0$, or equivalently that every prime $p|s$ satisfies $\chi_D(p)=1$ and therefore $(1*\chi_D)(s) = 2^{\Omega(s)}$.   
 Thus
 \begin{align*}
 \sum_{\substack{ n \in {\mathcal A}(N,k) \\ n =x^2 +dy^2}} 1  \leq \sum_{\substack{ rs \in {\mathcal A}(N,k)}} 2^{-\Omega(s)} (1*\chi_D)(s) 
 & = 2^{-k} \sum_{ rs \in {\mathcal A}(N,k)} 2^{\Omega(r)} (1*\chi_D)(s) \\ & \leq 2^{-k} \sum_{\substack{ r \in \mathcal{R} \\ \Omega(r) \leq k \\ r \leq N} } 2^{\Omega(r)} \sum_{s \leq N/r} (1*\chi_D)(s),
 \end{align*}
 where in the last step we used the nonnegativity of $1*\chi_D$ to take the sum over all $s \leq N/r$. Applying Lemma \ref{lem4.3} to the sum over $s$, we obtain $$ 
  \sum_{\substack{ n \in {\mathcal A}(N,k) \\ n =x^2 +dy^2}} 1 \ll \frac{N}{2^k} \log |D| \sum_{\substack{r \in \mathcal{R} \\ \Omega(r) \leq k} } \frac{2^{\Omega(r)}}{r}. 
  $$ 
  Now 
  \[
  \sum_{\substack{r \in \mathcal{R} \\ \Omega(r) \leq k}} \frac{2^{\Omega(r)}}{r}  \leq \prod_{p| |D|} \Big( 1 +  \sum_{j=1}^{k} \frac{2^j}{p^j} \Big) \prod_{p \nmid |D|} \Big(1 +\sum_{j=2}^{k} \frac{2^j}{p^j}\Big) \ll k \prod_{p| |D|} \Big(1 + \frac 2p\Big) \ll k (\log \log |D|)^2, 
  \]
  where the factor $k$ above arises from the prime $p=2$.  Since $k\ll \log \log N$ and $|D|\leq \log N$, the proposition follows. 
   \end{proof}

\section{Plan of the proof of the lower bound} \label{sec5}

We now turn to the proof of Theorem \ref{thm1.4}, which constitutes the bulk of the paper.  Let $N$ be large, recall that $k$ is an integer in the range \eqref{1.3}, and 
suppose in all that follows that
\[
k^3 2^k \leq \Delta \leq \log N. 
\]
We wish to bound the exceptional integers $n \in {\mathcal A}(N,k)$ that cannot be represented as $x^2+dy^2$ with $d$ below $\Delta$.  In fact, we shall consider only representations by such quadratic forms when $d$ is a prime lying in a suitable residue class, and show that most integers can be represented even with this further constraint.  

To state our results more precisely, we distinguish two cases according to whether the $2$-adic valuation $v_2(n)$ is $0$ or $1$ (or in other 
words whether $n\equiv 1 \ \md 2$ or $n \equiv 2 \ \md 4$).   Results for integers $n$ that are multiples of $4$ will be deduced easily from these 
cases.   Thus we define, for all $j =0$, $1$,
\[ 
{\mathcal A}_j(k) = \{ v_2(n) = j, \ \ \Omega(n)=k\}, 
\]
\[
{\mathcal A}_j(N,k) = \{ n \leq N, \ \ n \in {\mathcal A}_j(k)\}.
\]
Observe that 
\[ 
|\mathcal{A}_0(N, k)| = |\mathcal{A}(N, k)| - |\mathcal{A}(N/2, k - 1)|,
\] and
\[ 
|\mathcal{A}_1(N, k)| = |{\mathcal A}_0(N/2,k-1)| = |\mathcal{A}(N/2, k - 1)| - |\mathcal{A}(N/4, k - 2)|. 
\] 
Thus from Lemma \ref{lem1.2} we may deduce that for $k$ satisfying $|k-k_0| \leq \frac 13 k_0$ we have 
\begin{equation}
\label{ajuk} 
\mathcal{A}_j(N,k) = 2^{-j-1}\frac{N}{\log N} \frac{k_0^{k}}{k!} \Big(1 + O\Big(\frac{1+|k-k_0|}{k_0}\Big)\Big).
 \end{equation}
Here we recall that $k_0$ denotes $\log \log N$.

To each case $j = 0,1$ we associate a set $\mathcal{D}_j$ of primes. Below we let $W$ denote a parameter tending to 
infinity slowly with $N$; for definiteness, we set $W=\log \log \log N$.   With this choice of $W$, define
\begin{equation} 
\label{5.4} 
{\mathcal D}_0 = \Big \{ d\in \Big[\frac{\Delta}{\log \Delta}, \Delta\Big] \text{ prime},  \ \ d \equiv 7 \ \md 8, \ \ \chi_D(p)=1 \text { for } p\leq W \Big \}, 
\end{equation} 
\begin{equation} 
\label{5.5} 
{\mathcal D}_1 = \Big \{ d \in \Big[\frac{\Delta}{\log \Delta}, \Delta\Big ] \text{ prime},  \ \ d \equiv 1 \ \md 4, \ \ \chi_D(p)=1 \text { for \emph{odd} } p\leq W \Big \}.  
\end{equation} 
Here, as usual, $D$ denotes the fundamental discriminant associated to $d$ as given in \eqref{3.6}.  Thus $D=-d$ for $d\in {\mathcal D}_0$, and since 
$D\equiv 1 \ \md 8$ we have $\chi_D(2) =1$ automatically.  If $d$ is in ${\mathcal D}_1$ then $D= -4d$, and here $\chi_D(2)=0$.  
The primes in ${\mathcal D}_0$ lie in $\prod_{3\leq p \leq W} \frac{p-1}{2}$ reduced residue classes $\md {8\prod_{3\leq p\leq W} p}$, while those in 
${\mathcal D}_1$ lie in $\prod_{3\leq p \leq W} \frac{p-1}{2}$ reduced residue classes $\md {4\prod_{3\leq p\leq W} p}$.  Since $W$ is suitably 
small, a simple application of the prime number theorem in arithmetic progressions gives
\[
|{\mathcal D}_0|  = (1 + o(1)) \frac{1}{2^{\pi(W)+1} }\frac{\Delta}{\log \Delta}, \qquad |{\mathcal D}_1| = (1 + o(1)) \frac{1}{2^{\pi(W)}} \frac{\Delta}{\log \Delta}. 
\]
In particular, since $W = \log \log \log N$, and since \[ \log \Delta \gg k = (1 + o(1))\log \log N,\] we have the crude bounds
\begin{equation}\label{5.6b}
 |\mathcal{D}_0|, \  |\mathcal{D}_1| \gg \Delta (\log \Delta)^{-1 + o(1)}.
 \end{equation}

We are now ready to state our result on representing integers in ${\mathcal A}_j(N,k)$ using the binary 
quadratic forms $x^2 + dy^2$ with $d\in {\mathcal D}_j$.  From this result we shall swiftly deduce Theorem \ref{thm1.4}.

\begin{theorem} \label{thm5.1}  Suppose that $N$ is large, $k$ is an integer in the range 
\begin{equation}
\label{1.4range-wide} 
|k - k_0 | \leq 2k_0^{2/3},
\end{equation} 
where $k_0 := \log \log N$, and that $k^3 2^k \leq \Delta \leq \log N$.   
For $j = 0,1$ let ${\mathcal E}_j(N,k)$ denote the exceptional set of integers in ${\mathcal A}_j(N,k)$ that cannot be expressed as 
$x^2+dy^2$ for some $d\in {\mathcal D}_j$.  Then we have
$$
|{\mathcal E}_j(N,k)| \ll_{\eps}  \frac{|{\mathcal A}(N,k)|}{\log k_0}  + N k_0^{\eps - 5/6}.
$$ 
\end{theorem}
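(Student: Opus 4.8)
The plan is to run a second moment (variance) argument on the representation-counting function
\[ X_j(n) := \sum_{d \in {\mathcal D}_j} R_D(n), \]
where $D$ is, as always, the fundamental discriminant attached to $d$. By Lemma~\ref{lem3.1}(3) when $j=0$ (as $d\equiv 7\ \md 8$ and $n$ is odd on ${\mathcal A}_0$) and Lemma~\ref{lem3.1}(1) when $j=1$ (as $d\equiv 1\ \md 4$ and $d\ge 5$), the number of representations of $n$ by $x^2+dy^2$ equals $2R_D(n)$; since every term of $X_j$ is non‑negative, ${\mathcal E}_j(N,k)=\{n\in{\mathcal A}_j(N,k):X_j(n)=0\}$. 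Write $M_1:=\sum_{n\in{\mathcal A}_j(N,k)}X_j(n)$, $M_2:=\sum_{n\in{\mathcal A}_j(N,k)}X_j(n)^2$ and $\mu:=M_1/|{\mathcal A}_j(N,k)|$. Then, since each $n\in{\mathcal E}_j(N,k)$ contributes $\mu^2$,
\[ |{\mathcal E}_j(N,k)|\,\mu^2 \;\le\; \sum_{n\in{\mathcal A}_j(N,k)}\big(X_j(n)-\mu\big)^2 \;=\; M_2 - \frac{M_1^2}{|{\mathcal A}_j(N,k)|}. \]
So it suffices to show $M_2 \le \big(1+O(1/\log k_0)\big)M_1^2/|{\mathcal A}_j(N,k)| + O\big(\mu^2 N k_0^{\eps-5/6}\big)$: the second moment must barely exceed the square of the mean. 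The entire difficulty is quantitative: obtaining a positive proportion of ${\mathcal A}_j(N,k)$ is soft, but \emph{almost all} forces one to compute the leading constants on both sides and verify that they agree, leaving no spurious factor larger than $1$ in $M_2\big/\big(M_1^2/|{\mathcal A}_j(N,k)|\big)$.

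\emph{First moment.} Interchanging sums and expanding $R_D(n)=h_K^{-1}\sum_{\psi\in\widehat C_K}r(n,\psi)$ via \eqref{3.5}, the principal character $\psi_0$ contributes $h_K^{-1}\sum_{n\in{\mathcal A}_j(N,k)}(1\ast\chi_D)(n)$. The Dirichlet series $\sum_n(1\ast\chi_D)(n)z^{\Omega(n)}n^{-s}$ equals $\zeta(s)^z L(s,\chi_D)^z$ times an Euler product holomorphic and bounded for $\Re s\ge\tfrac12+\delta$ which, at $s=1$, is independent of $D$ up to $1+O(1/\Delta)$ (the $\chi_D(p)$ in the correction factors cancels). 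A Selberg--Sathe contour/saddle-point analysis, with saddle at $z_0=k/k_0=1+O(k_0^{-1/3})$ in the range \eqref{1.4range-wide}, then gives $\sum_{n\in{\mathcal A}_j(N,k)}(1\ast\chi_D)(n)=\big(c(z_0)+o(1)\big)L(1,\chi_D)^{z_0}|{\mathcal A}_j(N,k)|$ for an explicit $D$-independent $c(z_0)$; the non-principal $\psi$ give a lower-order term by the same method applied to the class group $L$-functions $L(s,\psi)$, which are entire and so contribute no pole (I would isolate this as a separate proposition). The class number formula $h_K=(w_K/2\pi)\sqrt{|D|}\,L(1,\chi_D)$ ($w_K=2$ for $|D|>4$) converts $h_K^{-1}L(1,\chi_D)^{z_0}$ into $\asymp|D|^{-1/2}L(1,\chi_D)^{z_0-1}$, and since the conditions $\chi_D(p)=1$ for $p\le W$ in \eqref{5.4}--\eqref{5.5} pin $L(1,\chi_D)$ to the range $[\Omega(\log W),O(\log|D|)]$, one has $L(1,\chi_D)^{z_0-1}=\exp\big(O(k_0^{-1/3}\log k_0)\big)=1+o(1)$ uniformly. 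Thus $M_1=(c(z_0)+o(1))|{\mathcal A}_j(N,k)|\sum_{d\in{\mathcal D}_j}|D|^{-1/2}L(1,\chi_D)^{z_0-1}$, and an auxiliary statement on averages of $L(1,\chi_d)$ over ${\mathcal D}_j$ (a further isolated proposition) evaluates this sum. Contributions from $n$ with a common factor with some $d$ are negligible because of the constraint $\Omega(n)=k$.

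\emph{Second moment.} Write $M_2=\sum_{d,d'\in{\mathcal D}_j}\sum_{n\in{\mathcal A}_j(N,k)}R_D(n)R_{D'}(n)$ and split into diagonal and off-diagonal. On the diagonal, $\sum_n R_D(n)^2\le\sum_n(1\ast\chi_D)(n)^2$, whose $z^{\Omega}$-weighted generating series has a pole of order only $\tfrac32 z$ at $s=1$, giving a bound $(3/2)^{k}k^{O(1)}|{\mathcal A}_j(N,k)|$; combined with the factors $h_K^{-2}\asymp\Delta^{-1}$ and $|{\mathcal D}_j|$, and the hypothesis $\Delta\ge k^3 2^k$ (which makes $|{\mathcal D}_j|\gg\Delta^{1+o(1)}$ large), the whole diagonal is $\ll(\log N)^{-\delta}M_1^2/|{\mathcal A}_j(N,k)|$ for a fixed $\delta>0$ --- far below the permitted error. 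For the off-diagonal $d\ne d'$ (distinct primes, so $\gcd(d,d')=1$, which removes all $\gcd$/genus-theory complications --- the point of restricting to prime $d$), expand $R_DR_{D'}=(h_Kh_{K'})^{-1}\sum_{\psi,\psi'}r(n,\psi)\overline{r(n,\psi')}$. The terms with $(\psi,\psi')\ne(\psi_0,\psi_0')$ are again lower order via Selberg's method for the class group $L$-functions of $K,K'$ (the two remaining isolated propositions, split by whether one or both characters are non-trivial and by the pole order of the relevant Rankin--Selberg combination). The surviving term is $\sum_{d\ne d'}(h_Kh_{K'})^{-1}\sum_n(1\ast\chi_D)(n)(1\ast\chi_{D'})(n)$; here the $z^{\Omega}$-weighted inner series equals $\zeta(s)^zL(s,\chi_D)^zL(s,\chi_{D'})^zL(s,\chi_{DD'})^z$ times a convergent Euler product, so Selberg--Sathe yields $\big(c_2(z_0)+o(1)\big)L(1,\chi_D)^{z_0}L(1,\chi_{D'})^{z_0}L(1,\chi_{DD'})^{z_0}|{\mathcal A}_j(N,k)|$. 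The crucial observation is that $DD'$ \emph{inherits} the congruence conditions: $\chi_D(p)=\chi_{D'}(p)=1$ for $p\le W$ forces $\chi_{DD'}(p)=1$ for $p\le W$, so $L(1,\chi_{DD'})^{z_0}=L(1,\chi_{DD'})(1+o(1))$ is controlled in exactly the same way, and its average over $d'\in{\mathcal D}_j$ (via equidistribution of $\chi_{DD'}(p)$ for $p>W$, the content of the $L(1,\chi)$-average proposition) combines with the class number formula factors $(h_Kh_{K'})^{-1}\asymp|DD'|^{-1/2}\big(L(1,\chi_D)L(1,\chi_{D'})\big)^{-1}$ to reproduce $M_1^2/|{\mathcal A}_j(N,k)|$ with relative error $O(1/W)=O(1/\log k_0)$, the choice $W=\log\log\log N$ being exactly what makes $1/W=1/\log k_0$.

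Assembling these gives $M_2\le\big(1+O(1/\log k_0)\big)M_1^2/|{\mathcal A}_j(N,k)|+(\text{error})$, hence the stated bound on $|{\mathcal E}_j(N,k)|$; the term $Nk_0^{\eps-5/6}$ is what remains of the Selberg--Sathe error terms after division by $\mu^2$, and it dominates only near the upper end of \eqref{1.4range-wide}, where $|{\mathcal A}_j(N,k)|$ is itself small. The main obstacle is not any single estimate but the constant-matching: forcing the leading terms of $M_1^2/|{\mathcal A}_j(N,k)|$ and $M_2$ to coincide up to $1+O(1/\log k_0)$. This is precisely why the congruence restrictions in ${\mathcal D}_j$ (making $L(1,\chi_D)$ and $L(1,\chi_{DD'})$ nearly constant on the family, and $W=\log\log\log N$ calibrating the error) and the restriction to prime $d$ (eliminating $\gcd$ complications) are built into the statement; and the technically heaviest component --- the non-principal class-group-character contributions, handled by Selberg's method for class group $L$-functions --- I would package into the four separate propositions rather than carry inline.
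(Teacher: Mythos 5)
Your overall architecture — expand $R_D(n)$ via class group characters, isolate the principal-character contribution, use Selberg--Sathe for $z^{\Omega(n)}$-weighted sums, package the non-principal contributions and the $L(1,\chi)$-average into separate propositions, and run a second-moment/Chebyshev argument over $n$ — is the right one and closely matches the paper. But there is a genuine gap in the choice of counting function, and it is fatal for the ``almost all'' conclusion: you work with the unnormalized $X_j(n) = \sum_{d\in{\mathcal D}_j} R_D(n)$, whereas the paper's $F_j(n)$ (see \eqref{Fj}) divides each $R_D(n)$ by $\tau(\ns)$, the divisor function of the $W$-smooth part of $n$. This division is not cosmetic. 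For $p\leq W$ and any $d\in\mathcal{D}_j$, the construction forces $\chi_D(p)=1$, so $R_D(n)$ carries a common multiplicative factor $\approx\tau(\ns)$ independent of $d$. When you square $X_j$ and sum over $n$, this common factor produces a contribution $\E[\tau(\ns)^2]/\E[\tau(\ns)]^2 \asymp \prod_{p\le W}(1+1/p) \asymp \gamma_W^{-1}$ that the square of the mean does not have. In the Selberg--Sathe language: your off-diagonal generating series $\zeta^z L_D^z L_{D'}^z L_{DD'}^z H(s;z)$ has $H(1;1)=\prod_p(1-\chi_{DD'}(p)/p^2)$, an absolute constant; the factor $L(1,\chi_{DD'})$, whose average over $d,d'\in\mathcal{D}_j$ is $\asymp \gamma_W^{-1}$ precisely because $\chi_{DD'}(p)=1$ is forced for every $p\leq W$ (this is exactly Proposition~\ref{prop5.5}), is then uncompensated. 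One thus gets $M_2 \asymp \gamma_W^{-1}\, M_1^2/|{\mathcal A}_j(N,k)|$ with $\gamma_W^{-1}\asymp \log W \to \infty$, and Chebyshev (in either the $(X-\mu)^2$ or the Paley--Zygmund form) gives $|{\mathcal E}_j|\ll (\log W)\,|{\mathcal A}_j|$, which is worse than trivial. Your claim that the average of $L(1,\chi_{DD'})$ ``reproduces $M_1^2/|{\mathcal A}_j|$ with relative error $O(1/W)$'' is therefore incorrect: the $p>W$ part of the Euler product averages to $1+O(1/W)$, but the $p\leq W$ part contributes the unkilled $\gamma_W^{-1}$.

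The paper's $\tau(\ns)$ normalization fixes exactly this. It changes the local factors at $p\leq W$ from $(1-zp^{-s})^{-2}$ to $(1-zp^{-s})^{-1}$ (first moment) and from $\sum_\ell(\ell+1)^2z^\ell p^{-\ell s}$ to $(1-zp^{-s})^{-1}$ (second moment), which produces $\mathcal{G}(1;1,j)=2^{-j-1}\gamma_W$ in Proposition~\ref{prop5.2} and $\mathcal{G}(1;1,j)=2^{-j-1}\gamma_W^3(1+O(W^{-1}))$ in Proposition~\ref{prop5.3}; the resulting $\gamma_W^3/\gamma_W^2=\gamma_W$ exactly cancels the $\langle L(1,\chi_{d\tilde d})\rangle\asymp\gamma_W^{-1}$ from Proposition~\ref{prop5.5}, leaving $1+O(W^{-1})$. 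Since $R_D(n)>0$ iff $R_D(n)/\tau(\ns)>0$, the normalization does not change the exceptional set, only the variance. A secondary, smaller point: your diagonal bound via $\sum_n(1\ast\chi_D)(n)^2$ has a pole of order $2z$ at $s=1$ (the local Dirichlet coefficient at a prime is $z(1+\chi_D(p))^2=2z(1+\chi_D(p))$), not $\tfrac32 z$, so the diagonal should be sized $\asymp 2^k$, not $(3/2)^k$; this still sits comfortably inside the error once $\Delta\geq k^3 2^k$ (as in the paper's Proposition~\ref{prop5.4}), so it does not affect the outcome, but the exponent matters if one wants to narrow the gap between the upper and lower thresholds.
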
 
\begin{proof}[Deducing Theorem \ref{thm1.4} from Theorem \ref{thm5.1}]   Extracting the largest power of $4$, we see that 
every $n \in {\mathcal A}(N,k)$ may be written uniquely as 
$n= 4^m r$ where $r$ is either in ${\mathcal A}_0(N/4^m, k-2m)$ or in ${\mathcal A}_1(N/4^m, k-2m)$.  Further, if $r$ can be represented as $x^2 +dy^2$ with 
$d\leq \Delta$, then plainly so can $n= 4^m r$.   Thus 
$$ 
|{\mathcal E}(N,k)| \leq \sum_{m\geq 0} \big( |{\mathcal E}_0(N/4^m, k-2m)|  + |{\mathcal E}_1(N/4^m, k-2m)| \big).
$$
First let us dispense with the terms $m \geq \log k_0$.  Bounding $|{\mathcal E}_0(N/4^m, k-2m)|  + |{\mathcal E}_1(N/4^m, k-2m)|$ trivially by $N/4^m$, we 
see that these terms contribute $\ll \sum_{m\geq \log k_0} N/4^m \ll N/k_0$, which is better than we need.  

For the terms with $m\leq \log k_0$, we wish to use Theorem \ref{thm5.1} to bound the quantity $|{\mathcal E}_j (N/4^m, k-2m)|$ (for $j=0,1$).   We must check that the required 
conditions there hold.   The condition on $\Delta$ is automatic: since $\Delta$ is assumed to be $\geq k^3 2^k$ it is clearly also $\geq (k-2m)^3 2^{k-2m}$.  The main condition to check is 
the analogue of \eqref{1.4range-wide} which here reads $|(k-2m) - \log \log (N/4^m) | \leq 2 (\log \log (N/4^m))^{2/3}$. To verify this, note that for $m\leq \log k_0$, one has $\log \log (N/4^m) = k_0 +O(1)$, and so the left side above is 
$\leq |k_0 - k| + 2m + O(1) \leq k_0^{2/3} + 2\log k_0 + O(1)$ since $k$ is in the range \eqref{1.3}.  Thus we may apply Theorem \ref{thm5.1}, and conclude 
that 
$$ 
|{\mathcal E}_0(N/4^m, k-2m)| +|{\mathcal E}_1(N/4^m, k-2m)| \ll_{\eps} \frac{|{\mathcal A}(N/4^m,k-2m)|}{\log k_0} + \frac{N}{4^m}  k_0^{\eps - 5/6}.
$$
Now applying Lemma \ref{lem1.2} we obtain 
\begin{align*}
|{\mathcal A}& (N/4^m, k-2m)|  \ll \frac{N}{4^m \log (N/4^m)} \frac{(\log \log (N/4^m))^{k-2m}}{(k-2m)!} 
\\ & \ll \frac{N}{4^m \log N} \frac{(\log \log N)^{k-2m}}{(k-2m)!}
\ll \frac{N}{4^m \log N} \frac{k_0^{k}}{k!} \Big(\frac{k}{k_0}\Big)^{2m} 
\ll \frac{|{\mathcal A}(N,k)|}{4^m},  
\end{align*}  
where the final estimate holds since $k/k_0 = 1 +O(k_0^{-1/3})$ and $m\leq \log k_0$ so that $(k/k_0)^{2m} \ll 1$.  We conclude that 
the contribution of the terms with $m \leq \log k_0$ may be bounded by 
$$ 
\ll \sum_{m\leq \log k_0} 4^{-m} \Big( \frac{|{\mathcal A}(N,k)|}{\log k_0} + N k^{\eps - 5/6}\Big) \ll 
\frac{|{\mathcal A}(N,k)|}{\log k_0} + N k_0^{-3/4}.   
$$ 

Combining this estimate with our bound for the larger range of $m$, we complete the deduction of Theorem \ref{thm1.4}.  
\end{proof}

Theorem \ref{thm5.1} will be deduced (in the next section) from the following four propositions which form the heart of our argument.    
Before stating these propositions, we introduce some notation that will be in place for the rest of our work.  We shall factorize $n$ as 
$\ns \nl$, where $\ns$ is composed only of primes below $W$, and $\nl$ is composed only of primes above $W$.  Further we define 
\begin{equation} 
\label{pw} 
\gamma_W = \prod_{p\leq W} \big(1- 1/p \big). 
\end{equation} 
For each choice of $j=0$ or $1$ we define 
\begin{equation} 
\label{Fj} 
F_j(n) = \frac{1}{|{\mathcal D}_j|} \sum_{d\in {\mathcal D}_j} \frac{|D|^{1/2}}{\pi \gamma_W} \frac{R_D(n)}{\tau(\ns)}. 
\end{equation} 
Note that if $n$ cannot be represented as $x^2 +dy^2$ with $d\in {\mathcal D}_j$, then $R_D(n)=0$ for all $d\in {\mathcal D}_j$ and 
therefore $F_j(n)=0$.   The proof is based on showing that for $n\in {\mathcal A}_j(k)$, the quantity $F_j(n)$ is usually close to its expected value of $1$, 
which is achieved by showing that $(F_j(n)-1)^2$ is small on average over $n$.  The four propositions below facilitate the calculation of this variance, which 
will be carried out in the next section.

\begin{proposition} \label{prop5.2}  Let $N$ be large and let $k$ be an integer in the range \eqref{1.4range-wide}.  The following statements hold for either choice of $j=0$ or $1$.  
Let $d$ be an element in ${\mathcal D}_j$, and let $D$ be the corresponding fundamental discriminant. Then 
\[
\frac{|D|^{1/2}}{\pi \gamma_W} \sum_{n \in {\mathcal A}_j(k)} \frac{R_D(n)}{\tau(\ns)} e^{-n/N} =  \sum_{n\in {\mathcal A}_j(k)} e^{-n/N}  + N O_{\eps}\big( k_0^{\eps - 5/6} + 
L(1,\chi_D)^{-1} k_0^{-2}\big),  
\] 
where $\gamma_W$ is as in \eqref{pw}.
\end{proposition}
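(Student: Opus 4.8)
The plan is to evaluate the weighted sum $S(D) := \sum_{n \in \mathcal{A}_j(k)} \frac{R_D(n)}{\tau(n^{\flat})} e^{-n/N}$ by opening up $R_D(n)$ via class group characters, as in \eqref{3.5}: $R_D(n) = \frac{1}{h_K} \sum_{\psi \in \widehat{C}_K} r(n,\psi)$. The principal character $\psi_0$ contributes the main term, since $r(n,\psi_0) = (1 * \chi_D)(n)$, while the non-principal characters should contribute only to the error. Thus I would write $S(D) = \frac{1}{h_K}\sum_{\psi} \sum_{n \in \mathcal{A}_j(k)} \frac{r(n,\psi)}{\tau(n^{\flat})} e^{-n/N}$ and treat the $\psi_0$ term and the $\psi \neq \psi_0$ terms separately.

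For the $\psi_0$ (main) term: here $r(n,\psi_0) = (1*\chi_D)(n)$, and on $\mathcal{A}_j(k)$ the congruence conditions built into $\mathcal{D}_j$ (namely $\chi_D(p) = 1$ for $p \leq W$) combined with the restriction $v_2(n) = j$ force $(1 * \chi_D)(n^{\flat}) = \tau(n^{\flat})$ exactly — this is precisely why $\tau(n^{\flat})$ appears in the denominator of $F_j$ and why we insisted $d \equiv 7 \md 8$ (resp. $d \equiv 1 \md 4$) so that $\chi_D(2)$ behaves correctly. So the $\psi_0$ term becomes $\frac{1}{h_K}\sum_{n \in \mathcal{A}_j(k)} (1*\chi_D)(n^{\sharp}) e^{-n/N}$, a sum over smooth-free parts of a relatively standard multiplicative function. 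One then uses the class number formula $h_K = \frac{|D|^{1/2}}{\pi} L(1,\chi_D) \cdot \frac{w}{2}$ (with $w = 2$ here since $|D|$ is large) so that the prefactor $\frac{|D|^{1/2}}{\pi \gamma_W}$ in \eqref{Fj} turns $\frac{1}{h_K}$ into $\frac{1}{\gamma_W L(1,\chi_D)}$. One then needs to show $\frac{1}{\gamma_W L(1,\chi_D)}\sum_{n \in \mathcal{A}_j(k)} (1*\chi_D)(n^{\sharp}) e^{-n/N}$ equals $\sum_{n \in \mathcal{A}_j(k)} e^{-n/N}$ up to the stated error; this is where Proposition \ref{prop5.5} (on $L(1,\chi_d)$ being essentially constant over these progressions) and a Selberg–Delange / mean-value computation for the multiplicative function $(1*\chi_D)(n^{\sharp})$ restricted to $\Omega(n) = k$ enter. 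The factor $\gamma_W$ precisely compensates the local densities at primes $p \leq W$, where $(1*\chi_D)(p^a) = a+1$ forces a correction relative to the "expected" count.

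For the $\psi \neq \psi_0$ terms: these are governed by the class group $L$-function $L(s,\psi) = \sum_n r(n,\psi) n^{-s}$, which is entire (as $\psi$ is non-principal) and whose analytic behavior — convexity bounds, subconvexity, or at least a suitable zero-free region / log-free estimate — controls $\sum_n \frac{r(n,\psi)}{\tau(n^{\flat})} e^{-n/N}$ restricted to $\Omega(n) = k$. The restriction to $\Omega(n) = k$ is handled by a contour integral in an auxiliary variable (writing $z^{\Omega(n)}$ and extracting the coefficient of $z^k$), converting the problem to understanding $\sum_n r(n,\psi) n^{-s} z^{\Omega(n)}/\tau(n^{\flat})$, a perturbed $L$-function to which Selberg's method applies. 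Summing the resulting bound over all $h_K - 1$ non-principal $\psi$ and dividing by $h_K$ should give a saving of a power of $k_0$, yielding the $k_0^{\eps - 5/6}$ and $L(1,\chi_D)^{-1} k_0^{-2}$ error terms. This analytic estimation of the non-principal contribution — getting a power-of-$k_0$ saving uniformly over the whole class group, while tracking the dependence on $L(1,\chi_D)$ — is the main obstacle, and it is precisely the content deferred to Propositions \ref{prop5.3} and \ref{prop5.4} and the class-group-$L$-function machinery of Section \ref{sec8-class-group}; here I expect we only need to invoke those, organize the character decomposition cleanly, and carry out the (routine but careful) Selberg–Delange analysis of the principal term together with Proposition \ref{prop5.5}.
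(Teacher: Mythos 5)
Your overall architecture is the paper's: open $R_D(n)$ via \eqref{3.5}, let the principal character carry the main term, kill the non-principal characters analytically, and finish with the class number formula. But two of your structural claims are wrong in ways that would derail the write-up. First, the non-principal characters are \emph{not} the content of Propositions \ref{prop5.3} and \ref{prop5.4} (those are second-moment statements about products $R_D(n)R_{\widetilde D}(n)$ and $R_D(n)^2$, used only in Section \ref{sec6}), nor is Proposition \ref{prop5.5} used anywhere in this proof. The non-principal contribution is handled by Lemma \ref{lem8.2}: for each fixed non-principal $\psi$ one writes $\sum_{v_2(n)=j} r(n,\psi)\tau(\ns)^{-1}z^{\Omega(n)}n^{-s} = L(s,\psi)^z\mathcal{G}(s;z,j)$, uses the zero-free region and Borel--Carath\'eodory bound of Lemma \ref{lem8.1} to shift the contour, and gets $\ll N(\log N)^{-100}$ \emph{per character}; summing over the $\leq h_K$ characters and dividing by $h_K$ keeps this bound. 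There is no delicate "power-of-$k_0$ saving uniformly over the class group" to be extracted here, and no dependence on $L(1,\chi_D)$ enters from this part.

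Second, and consequently, you have mislocated the source of the stated error terms $k_0^{\eps-5/6}$ and $L(1,\chi_D)^{-1}k_0^{-2}$: they come entirely from the principal-character analysis, which you dismiss as routine. After Selberg's method gives the main term $N\frac{(\log N)^{z-1}}{\Gamma(z)}L(1,\chi_D)^z\mathcal{G}(1;z,j)$ and Fourier inversion in $z=e^{i\theta}$ extracts $\Omega(n)=k$, one is left with $\frac{N}{\log N}\cdot 2^{-j-1}\gamma_W\cdot\frac{(\log(L(1,\chi_D)\log N))^k}{k!}$ plus an error $O_\eps(Nk_0^{\eps-1}L(1,\chi_D))$ from the $\theta$-integral. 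The key quantitative step is the approximation $\bigl(1+\frac{\log L(1,\chi_D)}{k_0}\bigr)^k = L(1,\chi_D)\bigl(1+O_\eps(k_0^{\eps-1/3})\bigr)+O(k_0^{-3})$ (equation \eqref{9.19}), valid because $(\log N)^{-1/2}\leq L(1,\chi_D)\ll k_0$; the emerging factor $L(1,\chi_D)$ cancels against $h_K=|D|^{1/2}L(1,\chi_D)/\pi$, the $\gamma_W$ cancels against the prefactor, and the additive $O(k_0^{-3})$ term — which does \emph{not} carry a factor of $L(1,\chi_D)$ — is what survives as $L(1,\chi_D)^{-1}k_0^{-2}$ after division by $h_K$. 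Without this step your plan has no mechanism producing either error term. (A smaller point: your claim that $(1*\chi_D)(\ns)=\tau(\ns)$ "exactly" fails at $p=2$ when $j=1$, where $\chi_D(2)=0$; the resulting factor of $\tfrac12$ is absorbed into the normalization $2^{-j-1}$, visible in \eqref{9.7} and \eqref{9.15}.)
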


Partial summation and \eqref{ajuk} easily allow us to give an asymptotic for the sum $\sum_{n\in {\mathcal A}_j(k)} e^{-n/N}$ appearing above.  
Write 
\[ 
\sum_{n\in {\mathcal A}_j(k)} e^{-n/N} = \int^{\infty}_0 e^{-u} |\mathcal{A}_j(uN, k)| du = 
\int_{1/\log N}^{\log N} e^{-u} |{\mathcal A}_j(uN,k)| du + O\big(\frac{N}{\log N}\big),
\] 
where we truncated the integral above using the trivial bound $|{\mathcal A}_j(uN,k)| \leq uN$ in the range $u \not \in [1/\log N, \log N]$.  Now using 
\eqref{ajuk} for $u \in [1/\log N, \log N]$ and the estimate 
 
 $\frac{k_0^k}{k!} \ll k_0^{-1/2} \log N$, which follows from \eqref{stirling-3}, we obtain that for $k$ in the range \eqref{1.4range-wide} 
\begin{equation}
\label{partial-sum} 
\sum_{n\in {\mathcal A}_j(k)} e^{-n/N} = 2^{-j - 1} \frac{N}{\log N} \frac{k_0 ^{k}}{k!} +  O( N k_0^{-5/6}).
\end{equation}

Note that there is a small subtlety in the application of \eqref{ajuk}, which is that $N$ must be replaced by $uN$ not only in the obvious term $\frac{N}{\log N}$, but also $k_0$ must be replaced by $\log \log (uN)$. We leave it to the reader to check that these changes have negligible effect for $u$ in the stated range.

Our next proposition considers averages of $R_D(n) R_{\tilde D}(n)$ for two different elements $d$, ${\widetilde d} \in {\mathcal D}_j$.  The answer 
will involve the character $\chi_{d {\tilde d}}$, which we now briefly introduce.  Since $d$ and ${\tilde d}$ are different primes that are congruent to each other $\md 4$, it follows 
that $d{\tilde d} \equiv 1 \ \md 4$ is a fundamental discriminant, and so the Kronecker symbol $\chi_{d {\tilde d}}$ is a primitive character to the modulus $d{\tilde d}$.  
This character is also closely connected to the product of characters $\chi_D \chi_{\tilde D}$.  Indeed in the case $j=0$ both characters are identical; and in the 
case $j=1$ the character $\chi_D \chi_{\tilde D}$ is the imprimitive character $\md {4d{\tilde d}}$ induced by the primitive character $\chi_{d {\tilde d}}$.

\begin{proposition} \label{prop5.3} Let $N$ be large and let $k$ be an integer in the range \eqref{1.4range-wide}. Let $j$ be $0$ or $1$. 
Let $d$ and ${\widetilde d}$ be two different elements in ${\mathcal D}_j$, and let $D$ and $\widetilde{D}$ denote the corresponding fundamental discriminants. If $d\equiv {\widetilde d} \ \md 8$ \textup{(}which is automatic when $j=0$\textup{)} then 
\begin{align} 
\label{5.10} 
 \frac{|D{\widetilde D}|^{1/2} }{\pi^2 \gamma_W^2} \sum_{n \in {\mathcal A}_j(k)} & \frac{R_D(n) R_{\tilde D}(n)}{\tau(\ns)^2} e^{-n/N} =  \big(2^j  +  O(W^{-1})\big)\gamma_W L(1,\chi_{d{\widetilde d}}) \sum_{n\in {\mathcal A}_j(k)} e^{-n/N} \nonumber\\ 
 &  + NO_{\eps}\Big( L(1,\chi_{d{\widetilde d}})k_0^{\eps - 5/6} +   L(1,\chi_D)^{-1} L(1, \chi_{\tilde D})^{-1} k_0^{\eps - 3}\Big),  
 \end{align} 
 while if $d \not\equiv {\widetilde d} \ \md 8$ \textup{(}which can only happen for $j=1$\textup{)} then 
\begin{equation} 
\label{5.11} 
 \sum_{n \in {\mathcal A}_j(k)} \frac{R_D(n) R_{\tilde D}(n)}{\tau(\ns)^2} e^{-n/N} =0.
 \end{equation}
 \end{proposition}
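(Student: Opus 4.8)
The vanishing statement \eqref{5.11} has an elementary explanation with no analytic input. Suppose $j=1$ and $d,\widetilde d\in{\mathcal D}_1$ with $d\not\equiv\widetilde d\ \md 8$; since both are $\equiv 1\ \md 4$, one is $\equiv 1\ \md 8$ and the other $\equiv 5\ \md 8$. If $n\in{\mathcal A}_1(k)$, so that $v_2(n)=1$ and $n\equiv 2\ \md 4$, is represented by $x^2+dy^2$ with $d\equiv 1\ \md 4$, then reducing mod $4$ forces $x$ and $y$ to be odd, and then $n\equiv 1+d\ \md 8$. Hence a fixed $n\in{\mathcal A}_1(k)$ is representable by $x^2+dy^2$ for at most one of the two classes $d\equiv 1\ \md 8$ and $d\equiv 5\ \md 8$, so by Lemma \ref{lem3.1} no $n\in{\mathcal A}_1(k)$ has $R_D(n)$ and $R_{\widetilde D}(n)$ both nonzero; thus $R_D(n)R_{\widetilde D}(n)\equiv 0$ on ${\mathcal A}_1(k)$ and \eqref{5.11} follows.

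For \eqref{5.10} I would run a second-moment (Rankin--Selberg) argument built on the class group expansion \eqref{3.5}: expanding $R_D(n)R_{\widetilde D}(n)$ produces a double sum over $\psi\in{\widehat C}_K$ and $\widetilde\psi\in{\widehat C}_{\widetilde K}$ of $r(n,\psi)r(n,\widetilde\psi)$, carrying (when $j=1$, where $2$ ramifies) an extra unimodular factor given by the ideal class of the prime above $2$. The organising principle is that $\sum_p r(p,\psi)r(p,\widetilde\psi)p^{-s}$ has a $\log\frac1{s-1}$ singularity at $s=1$ exactly for the \emph{resonant} pairs --- those for which the $L$-functions $L_K(s,\psi)$ and $L_{\widetilde K}(s,\widetilde\psi)$ contain a common Dirichlet $L$-factor --- and is bounded near $s=1$ for all other pairs, by equidistribution of $\psi(\mathfrak p)$. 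Because $d$ is prime, the genus group of $K$ is trivial when $j=0$, so the only resonant pair is $(\psi_0,\widetilde\psi_0)$; when $j=1$ the discriminant $-4d$ has the two prime discriminant factors $-4$ and $d$, giving a single nontrivial genus character $\psi_1$ with $L_K(s,\psi_1)=L(s,\chi_{-4})L(s,\chi_d)$, so the resonant pairs are $(\psi_0,\widetilde\psi_0)$ and $(\psi_1,\widetilde\psi_1)$, the latter because both share $L(s,\chi_{-4})$. One checks that for $j=1$ these two contributions are equal up to the sign $\legendre 2d\,\legendre 2{\widetilde d}$, which is $+1$ precisely when $d\equiv\widetilde d\ \md 8$; thus the resonant part is $2^j$ copies of a single diagonal term when $d\equiv\widetilde d\ \md 8$, and vanishes otherwise (consistent with \eqref{5.11}).

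The diagonal term coming from $(\psi_0,\widetilde\psi_0)$ I would compute by Selberg--Delange. Here $r(n,\psi_0)r(n,\widetilde\psi_0)=(1\ast\chi_D)(n)(1\ast\chi_{\widetilde D})(n)$, and by the Ramanujan-type identity for $(1\ast\chi)(p^m)(1\ast\chi')(p^m)$ the Dirichlet series of $(1\ast\chi_D)(n)(1\ast\chi_{\widetilde D})(n)/\tau(\ns)^2$, twisted by $z^{\Omega(n)}$ and restricted to $v_2(n)=j$, factors as $[\zeta(s)L(s,\chi_D)L(s,\chi_{\widetilde D})L(s,\chi_D\chi_{\widetilde D})]^z$ times an Euler product holomorphic and bounded for $s$ near $1$ and $z$ in a fixed disc. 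The congruence conditions defining ${\mathcal D}_j$ force $(1\ast\chi_D)(\ns)=\tau(\ns)$, so the factors of that correction at the primes $p\le W$ and at $p=2$ collapse to explicit quantities, which is where $\gamma_W$ appears. Applying Selberg's contour method from Section \ref{sec8-class-group}, with the relevant $z$ near $1$, extracts the $z^k$ coefficient against the smoothing $e^{-n/N}$; combining with the class number formula $h_K=\pi^{-1}|D|^{1/2}L(1,\chi_D)$ --- which turns the prefactor $|D\widetilde D|^{1/2}/(\pi^2\gamma_W^2 h_K h_{\widetilde K})$ into $\gamma_W^{-2}L(1,\chi_D)^{-1}L(1,\chi_{\widetilde D})^{-1}$ --- and with $L(1,\chi_D\chi_{\widetilde D})=2^{-j}L(1,\chi_{d\widetilde d})$ for $d\equiv\widetilde d\ \md 8$ (the $2^{-j}$ being the Euler factor at $2$), the diagonal term comes out to $\gamma_W L(1,\chi_{d\widetilde d})\sum_{n\in{\mathcal A}_j(k)}e^{-n/N}$ up to a multiplicative $1+O(W^{-1})$ and an additive $NO_\eps(L(1,\chi_{d\widetilde d})k_0^{\eps-5/6})$, the latter coming from truncating the $e^{-n/N}$ integral and from Stirling errors, exactly as in the passage from \eqref{ajuk} to \eqref{partial-sum}. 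Multiplying by the $2^j$ resonant copies produces the asserted main term.

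The hard part, which I expect to occupy the bulk of the argument (with Selberg's techniques, developed in Sections \ref{sec8-class-group}--\ref{sec10}), is to show the non-resonant pairs contribute at most $NO_\eps(L(1,\chi_D)^{-1}L(1,\chi_{\widetilde D})^{-1}k_0^{\eps-3})$. One cannot bound these pairs one at a time and sum trivially over the $\ll h_K h_{\widetilde K}\ll\Delta^{1+o(1)}$ of them, since the pointwise bound $|r(n,\psi)r(n,\widetilde\psi)|\le(1\ast\chi_D)(n)(1\ast\chi_{\widetilde D})(n)$ is far too weak. Instead I would keep the sums over $\psi$ and $\widetilde\psi$ intact and use the absence of a pole: for a non-resonant pair the generating Dirichlet series carries no $\zeta(s)^z$ factor, so Selberg's method gives a saving of a power of $k_0$ over the diagonal term, and a mean-value (large-sieve type) estimate over the family of class group $L$-functions --- kept tractable by the restriction to prime $d$ --- then sums over the characters within the target. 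I anticipate the real obstacle to be the possible Landau--Siegel zeros of $L(s,\chi_D)$ and $L(s,\chi_{\widetilde D})$: accommodating these is precisely why the error is stated with the factors $L(1,\chi_D)^{-1}L(1,\chi_{\widetilde D})^{-1}$, and why the relevant inputs must be proved with full uniformity.
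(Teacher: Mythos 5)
Your treatment of \eqref{5.11} is correct and is essentially the paper's argument: reducing $x^2+dy^2\equiv 2\ \md 4$ forces $x,y$ odd, hence $n\equiv 1+d\ \md 8$, and Lemma \ref{lem3.1}(1) converts non-representability into $R_D(n)R_{\widetilde D}(n)=0$. Your identification of the main term also matches the paper: the only surviving character pairs are $(\psi_0,\widetilde\psi_0)$ and (for $j=1$) $(\psi_1,\widetilde\psi_1)$, the latter contributing the same as the former exactly when $\chi_d(2)=\chi_{\widetilde d}(2)$, i.e.\ when $d\equiv\widetilde d\ \md 8$; the diagonal is then evaluated by Selberg--Delange applied to $(\zeta(s)L(s,\chi_D)L(s,\chi_{\widetilde D})L(s,\chi_{d\widetilde d}))^z\mathcal{G}(s;z,j)$, followed by the class number formula. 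Your bookkeeping of the powers of $2$ (via $L(1,\chi_D\chi_{\widetilde D})=2^{-j}L(1,\chi_{d\widetilde d})$) is consistent with the paper's, which instead absorbs the Euler factor at $2$ into $\mathcal{G}$.

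The genuine gap is in your treatment of the non-resonant pairs. You assert that one cannot bound these pairs individually and sum trivially over the $h_Kh_{\widetilde K}$ of them, and you therefore propose an unspecified large-sieve/mean-value estimate over the family of class group $L$-functions. This is backwards: $h_Kh_{\widetilde K}\ll|D\widetilde D|^{1/2+o(1)}\ll(\log N)^{1+o(1)}$, so a per-pair bound of $N(\log N)^{-100}$ \emph{does} sum trivially, and that is exactly what the paper proves (Lemma \ref{lem8.4}). The input you are missing is not a mean-value theorem but a pointwise one: for each non-resonant pair one forms the Hecke character $\Psi(\mathfrak P)=\psi(N_{L/K}\mathfrak P)\widetilde\psi(N_{L/\widetilde K}\mathfrak P)$ on the biquadratic field $L=\Q(\sqrt D,\sqrt{\widetilde D})$, checks via Diao's Lemma 6 that $\Psi$ is non-principal outside cases (i) and (ii), and then runs the Selberg contour argument using Fogels's zero-free region for $L(s,\Psi)$ --- including the lower bound $\delta\geq C(m,\eps)|D_L|^{-\eps}$ for a possible real zero --- to save an arbitrary power of $\log N$ per pair, not merely "a power of $k_0$" as you anticipate. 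Relatedly, your diagnosis that the factors $L(1,\chi_D)^{-1}L(1,\chi_{\widetilde D})^{-1}$ in the error term of \eqref{5.10} are there to accommodate Landau--Siegel zeros of the off-diagonal is incorrect: they arise from the class number formula $h_K=|D|^{1/2}L(1,\chi_D)/\pi$ applied to the secondary term $O(k_0^{-3}\log N)$ in the Selberg--Delange asymptotic for the diagonal, after multiplying through by $|D\widetilde D|^{1/2}/h_Kh_{\widetilde K}$. Without the zero-free-region input your plan for the off-diagonal does not close.
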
 
 
 The next proposition concerns the case when $d = {\widetilde d}$, where an upper bound suffices.

 \begin{proposition} \label{prop5.4} Let $N$ be large and let $k$ be an integer in the range \eqref{1.4range-wide}.   Let $j=0$ or $1$, and let $d$ be an element of ${\mathcal D}_j$ with $D$ denoting the corresponding fundamental discriminant.  Then we have 
 \[
\frac{|D|}{\pi^2 \gamma_W^2} \sum_{n \in {\mathcal A}_j(k)} \frac{R_D(n)^2}{\tau(\ns)^2} e^{-n/N} \ll \frac{2^k N}{\gamma_W L(1,\chi_D)} \big( 
k_0^{-1/2} + L(1,\chi_D)^{-1}k_0^{-2}\big)    + |D|^{1/2} (\log |D|)^3 N. 
\]
\end{proposition}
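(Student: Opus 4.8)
\emph{Proof plan.} The strategy is a Rankin--Selberg unfolding combined with class group characters, after which a single main term must be estimated; crucially, only upper bounds for the relevant residues are needed, so the delicate uniform asymptotics of Sections~\ref{sec9}--\ref{sec10} can be avoided.

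\emph{Unfolding.} The quantity $R_D(n)^2$ counts ordered pairs $(\mathfrak a,\mathfrak b)$ of principal integral ideals of $\mathcal O_K$ with $N\mathfrak a=N\mathfrak b=n$. Writing $\mathfrak m$ for the ideal greatest common divisor of $\mathfrak a$ and $\mathfrak b$, we have $\mathfrak a=\mathfrak m\mathfrak a'$, $\mathfrak b=\mathfrak m\overline{\mathfrak a'}$ with $\mathfrak a'$ \emph{primitive} (no rational prime lies below both $\mathfrak a'$ and $\overline{\mathfrak a'}$), and the principality of $\mathfrak a$ and $\mathfrak b$ is equivalent to $[\mathfrak a']^2=1$ in $C_K$ together with $[\mathfrak m]=[\mathfrak a']$. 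This gives a bijection $(\mathfrak a,\mathfrak b)\leftrightarrow(\mathfrak m,\mathfrak a')$; carrying it out on the weighted sum in the statement (retaining the weights $e^{-n/N}$ and $\tau(\ns)^{-2}$ and the constraint $n\in\mathcal A_j(k)$, exactly as in the proofs of Propositions~\ref{prop5.2} and \ref{prop5.3}), and detecting the two class conditions by the orthogonality of the characters $\psi\in\widehat C_K$, one is led to an expression of the shape
\[
\frac1{h_K}\sum_{g\in C_K[2]}\ \Big(\sum_{\substack{\mathfrak a'\ \text{primitive}\\ [\mathfrak a']=g}} w(N\mathfrak a')\Big)\Big(\sum_{[\mathfrak m]=g} w'(N\mathfrak m)\Big),
\]
where $w,w'$ carry the $z^{\Omega(\cdot)}$-marking used to pick out $\Omega(n)=k$.

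\emph{Main term.} For each fixed $g\in C_K[2]$, the $\mathfrak m$-sum is essentially $h_K^{-1}\sum_\psi\overline\psi(g)L(s,\psi)$, whose only pole at $s=1$ is a simple one from $\psi=\psi_0$, where $L(s,\psi_0)=\zeta_K(s)=\zeta(s)L(s,\chi_D)$; likewise the $\mathfrak a'$-sum has, for each $g$, a simple pole at $s=1$ coming from the principal character. Hence the polar part of each $g$-summand behaves near $s=1$ like $h_K^{-2}\bigl(\zeta(s)L(s,\chi_D)\bigr)^2$, up to bounded Euler factors at primes dividing $D$ and at primes $\leq W$. Performing the coefficient extraction in $z$ by a contour/saddle-point argument analogous to, but less delicate than, those of Sections~\ref{sec9}--\ref{sec10}, the double pole produces a term of size $\asymp h_K^{-2}\,2^kNk_0^{-1/2}L(1,\chi_D)$, together with a secondary contribution of the type already encountered in Proposition~\ref{prop5.2}. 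Since $d$ is prime we have $\omega(D)\leq 2$, so $|C_K[2]|=2^{\omega(D)-1}\ll 1$; summing over $g$, multiplying by $|D|/(\pi^2\gamma_W^2)$, and using the class-number formula $h_K\asymp|D|^{1/2}L(1,\chi_D)$ to convert $|D|/h_K^2$ into $L(1,\chi_D)^{-2}$, one arrives at the first term $\tfrac{2^kN}{\gamma_W L(1,\chi_D)}\bigl(k_0^{-1/2}+L(1,\chi_D)^{-1}k_0^{-2}\bigr)$.

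\emph{Error term.} All remaining contributions --- the terms with $\psi\neq\psi_0$ in the $\mathfrak m$-sum, and those with a non-principal character in the $\mathfrak a'$-sum --- involve only Hecke $L$-functions attached to non-trivial class group characters, which are entire. Shifting the contour past $s=1$ to $\Re s=\tfrac12+\eps$ and estimating these $L$-functions by convexity costs only $|D|^{O(1)}$; since there are $h_K\ll|D|^{1/2+o(1)}$ characters in play and $|D|\leq\log N$, the total is $\ll N^{1-\delta}\ll|D|^{1/2}(\log|D|)^3N$. (These terms can instead be bounded by a completely elementary argument --- replacing $R_D$ by $1\ast\chi_D$ in one of the two factors and applying the Pólya--Vinogradov inequality as in the proof of Lemma~\ref{lem4.3} --- which already produces a bound of exactly the shape $|D|^{1/2}(\log|D|)^3N$.)

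\emph{Main obstacle.} The delicate point is the bookkeeping in the main-term step: keeping precise track of the dependence on $L(1,\chi_D)$ and on $\gamma_W$ --- the latter entering through both the $\tau(\ns)^{-2}$ weight and the congruence conditions defining $\mathcal D_j$, which jointly perturb the Euler factors at primes $\leq W$ --- and pinning down the rational Euler-product constant responsible for the factor $2^k$, all while only the residue at the double pole, rather than a full asymptotic expansion, needs to be controlled.
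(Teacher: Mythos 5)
Your plan takes a genuinely different route from the paper. The paper expands $R_D(n)^2 = h_K^{-2}\sum_{\psi,\widetilde\psi}r(n,\psi)r(n,\widetilde\psi)$ directly, kills the off-diagonal pairs $\widetilde\psi\notin\{\psi,\overline\psi\}$ via Lemma~\ref{lem8.5}, and then -- this is the key shortcut -- avoids any double-pole Selberg--Delange analysis for the genus characters by noting $r(n,\psi_0)^2 \leq 2^{\Omega(n)}r(n,\psi_0) = 2^k r(n,\psi_0)$, which reduces the main term to the already-established single-pole asymptotic of Proposition~\ref{prop5.2}; the complex characters $\psi$ (paired with $\widetilde\psi\in\{\psi,\overline\psi\}$) are handled by dropping the $\Omega(n)=k$ constraint and doing a plain Mellin shift to $\Re s = 3/4$. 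Your Rankin--Selberg unfolding of $R_D(n)^2$ via $(\mathfrak a,\mathfrak b)\leftrightarrow(\mathfrak m,\mathfrak a')$ and the computation $[\mathfrak a']^2=1$, $[\mathfrak m]=[\mathfrak a']$ are correct, and the double-pole heuristic does produce the $2^k$ from the $(2k_0)^k/k!$ factor, so the main term is in principle reachable your way, though at the cost of redoing the Hankel-contour analysis of Appendix~\ref{app-A} for an exponent-$2z$ singularity rather than reusing Proposition~\ref{prop5.2}.

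However, there is a genuine gap in your error-term step. After detecting the two class conditions by orthogonality, you obtain a double character sum $\frac{1}{h_K^2}\sum_{\psi,\psi'}\overline{\psi\psi'}(g)\,A(s,\psi\psi')\,B(s,\psi\overline{\psi'})$ (in your notation, the $\mathfrak m$-sum and the primitive $\mathfrak a'$-sum). Your claim that everything outside the ``both principal'' case ``involves only Hecke $L$-functions attached to non-trivial class group characters, which are entire'' is false: the cross terms where exactly one of $\psi\psi'$, $\psi\overline{\psi'}$ is principal (equivalently $\psi$ complex with $\psi'\in\{\psi,\overline\psi\}$ -- there are about $2h_K$ of these) still carry the factor $\zeta_K(s)$ from the principal side, hence a (branch-point) singularity at $s=1$. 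Shifting past $s=1$ to $\Re s = \tfrac12+\eps$ therefore picks up a residue-type contribution, which after summing over these $\asymp h_K$ characters and normalising by $h_K^{-2}$ gives a term of size $\asymp NL(1,\chi_D)(\log|D|)^{O(1)}/h_K \asymp N|D|^{-1/2}(\log|D|)^{O(1)}$; multiplying by $|D|/(\pi^2\gamma_W^2)$ this is $\asymp N|D|^{1/2}(\log|D|)^{O(1)}$, i.e., \emph{of exactly the same order as the stated error term}, not $\ll N^{1-\delta}$. This is precisely what the paper's bound for the complex characters (the computation around \eqref{11.3}--\eqref{11.4}, yielding $NL(1,\chi_D)L(1,\psi^2)\mathcal G(1)$ from the simple pole) captures, and it is the dominant contribution to the error $|D|^{1/2}(\log|D|)^3 N$. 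Your alternative elementary bound via replacing one factor $R_D$ by $1\ast\chi_D$ also does not obviously isolate these cross terms, since one still needs to extract a main term from the resulting sum. To repair the plan you would need to treat these single-pole terms explicitly, most simply by dropping the $\Omega(n)=k$ constraint for them (as the paper does) and bounding the simple-pole residue directly.
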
 

Finally, to complete our calculation of the average of $(F_j(n)-1)^2$, we shall need an asymptotic for the the average of $L(1,\chi_{d{\widetilde d}})$ 
appearing in Proposition \ref{prop5.3}.  

\begin{proposition} \label{prop5.5}  For each $j=0,1$ we have 
$$ 
\frac{1}{|\mathcal{D}_j|^{2}} \sum_{\substack{ d \neq \tilde{d} \in {\mathcal D}_j \\  d\equiv {\tilde d} \ \md 8} } L(1,\chi_{d\tilde{d}}) = \frac{1}{\gamma_W} \big(2^{-j}+ O(W^{-1})\big). 
$$ 
\end{proposition}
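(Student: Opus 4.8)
The plan is to compute the average of $L(1,\chi_{d\tilde d})$ over pairs $d \neq \tilde d$ in $\mathcal{D}_j$ with $d \equiv \tilde d \pmod 8$ by expanding $L(1,\chi_{d\tilde d})$ into a (suitably truncated) Dirichlet series and swapping the order of summation. First I would use the classical smoothed approximation
\[ L(1,\chi_{d\tilde d}) = \sum_{m \le x} \frac{\chi_{d\tilde d}(m)}{m} + O\!\left(\frac{\sqrt{d\tilde d}}{x}\right), \]
valid for any $x$, taking $x$ a small power of $\Delta$ (say $x = \Delta^{10}$) so the error term is negligible even after summing trivially over the $\ll (\Delta/\log\Delta)^2$ pairs. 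Since $d, \tilde d$ are distinct primes congruent mod $8$, their product is a fundamental discriminant and $\chi_{d\tilde d}(m) = \left(\frac{d\tilde d}{m}\right)$ factors (for $(m,d\tilde d)=1$, which covers all but a negligible contribution) as $\chi_d(m)\chi_{\tilde d}(m)$ up to controlled sign/quadratic-reciprocity factors; in the congruence classes defining $\mathcal{D}_j$ these reciprocity factors are constant. So the main term becomes
\[ \frac{1}{|\mathcal{D}_j|^2} \sum_{m \le x} \frac{1}{m} \Big(\sum_{d \in \mathcal{D}_j'} \chi_d(m)\Big)\Big(\sum_{\tilde d \in \mathcal{D}_j'} \chi_{\tilde d}(m)\Big) \]
modulo the diagonal $d = \tilde d$ and the mod-$8$ splitting, where $\mathcal{D}_j'$ denotes the appropriate sub-collection.

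Next I would evaluate the inner sum $\sum_{d \in \mathcal{D}_j} \chi_d(m) = \sum_{d \in \mathcal{D}_j} \left(\frac{m}{d}\right)$ (after reciprocity) using the prime number theorem in arithmetic progressions: for fixed $m$, as $d$ ranges over primes in the fixed union of residue classes defining $\mathcal{D}_j$, the symbol $\left(\frac{m}{d}\right)$ depends only on $d$ modulo $m$ (or $4m$), so this is a sum of $\chi$-like values over primes in progressions. When $m$ is a perfect square (coprime to the relevant modulus) the symbol is identically $1$ on all of $\mathcal{D}_j$ and we get $|\mathcal{D}_j|$; otherwise there is cancellation and the sum is $O(|\mathcal{D}_j| \cdot \Delta^{-\delta})$ for some $\delta > 0$ coming from Siegel–Walfisz, uniformly for $m \le x = \Delta^{O(1)}$. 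Squaring, the off-diagonal-in-$m$ (non-square $m$) terms contribute $O(\Delta^{-\delta} \log x) = o(1)$, while the square terms $m = \ell^2$ contribute $\sum_{\ell} \ell^{-2}$ restricted to $\ell$ coprime to $2\prod_{p \le W} p$ — which is exactly $\frac{1}{\zeta(2)} \cdot \prod_{p \le W}(1 - p^{-2})^{-1} \cdot (\text{correction})$; one needs to track that the $\chi_d(p) = 1$ conditions for $p \le W$ force $\left(\frac{p}{d}\right) = 1$, so the symbol at $m$ divisible by such $p$ is again constant, contributing the missing Euler factors. Collecting these, and subtracting the negligible diagonal $d = \tilde d$ contribution (which is $\ll |\mathcal{D}_j|^{-1} \sum_d L(1,\chi_{d^2})$, bounded since $d^2$ is not a fundamental discriminant — here one should note $d \neq \tilde d$ is enforced, so this term is simply absent), one arrives at a main term of the shape $\frac{6}{\pi^2}\prod_{p \le W}(1-p^{-2})^{-1} \cdot (\text{Euler factors at } p \le W) = \gamma_W^{-1}$ up to the factor $2^{-j}$ accounting for whether $p=2$ participates, with error $O(W^{-1})$ absorbing the incomplete products and the $\pm$ contributions. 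The final bookkeeping should reconcile the $\prod_{p \le W}$ factors into precisely $\gamma_W^{-1}(2^{-j} + O(W^{-1}))$.

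The main obstacle I anticipate is the uniformity in $m$ of the cancellation in $\sum_{d \in \mathcal{D}_j}\left(\frac{m}{d}\right)$: one is summing over primes $d$ up to $\Delta$ in residue classes to a modulus that can be as large as $4m \le \Delta^{O(1)}$, which is far outside the Siegel–Walfisz range and indeed far outside GRH-strength ranges for individual $m$. The resolution must be that we do not need cancellation for each individual $m$, but only on average over $m \le x$ — i.e., a large-sieve / Pólya–Vinogradov-type bound for $\sum_{m \le x} \frac{1}{m}\big|\sum_{d} \left(\frac{m}{d}\right)\big|^2$, exploiting that this is (essentially) a sum of $\left(\frac{d\tilde d}{m}\right)$ over $m$, where for $d \neq \tilde d$ the modulus $d\tilde d \asymp \Delta^2 \gg x$ so character-sum cancellation (Pólya–Vinogradov) is available. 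Thus the correct route is: expand the square in the $d,\tilde d$ variables first, write the result as $\sum_{d \neq \tilde d} \sum_{m \le x} \frac{1}{m}\left(\frac{d\tilde d}{m}\right)$, and use that $\left(\frac{d\tilde d}{\cdot}\right)$ is a non-principal character mod $d\tilde d > x$ for $d \neq \tilde d$, so the inner sum is $\sum_{m \le x} \frac{1}{m}\left(\frac{d\tilde d}{m}\right) = L(1,\chi_{d\tilde d}) + O(\sqrt{d\tilde d}/x)$ — wait, that's circular. The genuinely non-circular step is to *truncate at square $m$ separately*: split $m = \ell^2 r$ with $r$ squarefree; the $r = 1$ part gives the main term directly (no cancellation needed, since $\left(\frac{d\tilde d}{\ell^2}\right) = 1$ when $(\ell, d\tilde d) = 1$), and for $r > 1$ one has a genuine non-principal character $m \mapsto \left(\frac{d\tilde d}{m}\right)$ restricted to $m$ with squarefree part $r$, and summing over $d, \tilde d$ one gets $\sum_{r > 1 \text{ sqfree}} \frac{1}{r}\big(\sum_{d}\left(\frac{d}{r}\right)\big)^2 / (\text{something})$ — now the modulus $r$ is small, so we are back to needing Siegel–Walfisz, but only for the squarefree modulus $r \le x$ which is fine since $r$ ranges over a set where we can afford to lose on the large ones: $\sum_{r > 1} r^{-1} \cdot |\mathcal{D}_j|^2 \Delta^{-\delta}$ if $r \le \Delta^{\delta/2}$, and a trivial bound $|\mathcal{D}_j|^2$ combined with $\sum_{r > \Delta^{\delta/2}, r \le x} r^{-1} \cdot (\ldots)$ — here I would instead invoke the large sieve for the quadratic characters $d \mapsto \left(\frac{\cdot}{d}\right)$ to get $\sum_{d \le \Delta} |\sum_{r \le x} a_r \left(\frac{r}{d}\right)|^2 \ll (\Delta + x)\sum |a_r|^2$, which handles all $r > 1$ at once and gives the $o(|\mathcal{D}_j|^2)$ bound provided $x = o(\Delta)$ — so in fact I would take $x$ a small power of $\Delta$ but still $\ge \Delta^{1/2}$ times a log to kill the initial approximation error, consistent with $x = o(\Delta)$. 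This large-sieve input for real characters is standard (e.g. Heath-Brown's quadratic large sieve, or the simpler Pólya–Vinogradov plus Cauchy–Schwarz), and its correct deployment is the one genuinely non-routine ingredient.
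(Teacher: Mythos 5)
Your overall plan follows the same general shape as the paper's argument — expand $L(1,\chi_{d\tilde d})$ via a P\'olya--Vinogradov truncation, isolate the contribution of $n$ with trivial squarefree part as the main term, and show the remaining range is negligible. However, the mechanism you sketch for handling the oscillatory terms has a genuine gap. First, an arithmetic slip: since $d,\tilde d$ are primes of size $\asymp\Delta$, one has $d\tilde d\asymp\Delta^2$ and $\sqrt{d\tilde d}\asymp\Delta$, so the P\'olya--Vinogradov truncation forces $x\gg\Delta\log\Delta$; your final remark that one may take $x\geq\Delta^{1/2}$ times a log, ``consistent with $x=o(\Delta)$,'' cannot stand. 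More seriously, even granting the truncation, the two tools you propose do not cover the full range of squarefree parts $r$: the Heath--Brown quadratic large sieve carries an unavoidable $(MN)^{\epsilon}$ loss for any fixed $\epsilon>0$, so after dividing by $|\mathcal{D}_j|^2\asymp\Delta^{2}(\log\Delta)^{-2+o(1)}$ it only gives an acceptable bound for $r\gg\Delta^{c\epsilon}$, while Siegel--Walfisz (or the prime number theorem in arithmetic progressions) reaches only moduli $r\ll(\log\Delta)^A$. The intermediate range $(\log\Delta)^A\ll r\ll\Delta^{c\epsilon}$ is covered by neither, and the contribution of those $r$ would dominate; nothing in your write-up handles it.

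The paper closes exactly this gap with a step you do not have. After truncating at $M=\Delta^{3/2}$, it shows that the range $(\log\Delta)^{20}<n\leq M$ is negligible by first taking absolute values and \emph{forgetting} the special structure of $\mathcal{D}_j$: it bounds $\sum_{d,\tilde d}|\cdot|$ by $\sum_{D\leq\Delta^2,\ D\equiv 1\ \md 8}|\cdot|$, applies Cauchy--Schwarz over this enlarged $D$-range (sacrificing a harmless factor $\ll(\log\Delta)^{1+o(1)}$), opens the square in the $n$-variable, and then applies P\'olya--Vinogradov \emph{in the $D$-variable}: for $n_1n_2$ neither a square nor twice a square, $\sum_{D\leq\Delta^2,\ D\equiv1\ \mdsub{8}}\chi_D(n_1n_2)\ll\sqrt{n_1n_2}\log(n_1n_2)$, which wins because $n_1n_2\leq M^2=\Delta^3$ is small compared with the $D$-range of size $\Delta^2$. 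This reduces the problem to $n\leq(\log\Delta)^{20}$, where all moduli that arise are $\ll(\log\Delta)^{22}$ and the prime number theorem in arithmetic progressions suffices for the $r>1$ terms. Incorporating this ``Cauchy--Schwarz and relax the discriminant range'' maneuver is precisely what your proposal needs to become correct.
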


\section{Deducing Theorem \ref{thm5.1} from Propositions \ref{prop5.2}, \ref{prop5.3}, \ref{prop5.4} and \ref{prop5.5}}  \label{sec6}

We now deduce Theorem \ref{thm5.1} from the four propositions enunciated in the previous section.  Let $j$ be $0$ or $1$, and 
$k$ an integer in the range \eqref{1.4range-wide}.  Recall from \eqref{Fj} the definition of $F_j(n)$, and recall that $F_j(n) =0$ 
if $n$ cannot be represented as $x^2+dy^2$ with $d\in {\mathcal D}_j$.   Therefore, writing ${\mathcal E}_j(N,k)$ for the exceptional set as in Theorem \ref{thm5.1},
\begin{equation} 
\label{6.1} 
|{\mathcal E}_j(N,k)| \ll \sum_{n\in {\mathcal A}_j(k)} (F_j(n)-1)^2 e^{-n/N} = \sum_{n\in {\mathcal A}_j(k)} (F_j(n)^2 - 2 F_j(n) +1) e^{-n/N}. 
\end{equation}
We now invoke Propositions \ref{prop5.2}, \ref{prop5.3}, \ref{prop5.4}, and \ref{prop5.5} to bound the right side above.  To handle 
some error terms that arise, we require bounds for the average values of $L(1,\chi_D)^{-m}$ with $m=1$ and $2$. Although we can be more precise, it suffices to use \cite[Theorem 2]{GS} and \eqref{5.6b} to obtain 
\begin{equation} 
\label{6.2} 
\frac{1}{|\mathcal{D}_j|}\sum_{d\in {\mathcal D}_j} L(1,\chi_D)^{-m} \leq \frac{1}{|\mathcal{D}_j|}\sum_{\substack{d \leq \Delta \\ d\text{ odd} \\ \mu^2(d) = 1}} L(1,\chi_D)^{-m} \ll \frac{\Delta}{|\mathcal{D}_j|} \ll (\log \Delta)^{1 + o(1)}
\end{equation} for $j = 0,1$ and $m = 1,2$. 

A few further remarks on the application of \cite[Theorem 2]{GS} may be helpful. First, since we are dealing with moments where $m$ is bounded (albeit negative) we can exclude the contribution of exceptional characters, as remarked in the paragraph following the statement of \cite[Theorem 2]{GS}. Second, denoting by $X$ the random Euler product featuring in the statement of \cite[Theorem 2]{GS} then, as remarked in \cite[page 995]{GS}, $\mathbf{P}(L(1, X) \leq 1/t)$ decays doubly-exponentially as $t \rightarrow \infty$, so that the moments $\E L(1, X)^{-1}$ and $\E L(1,X)^{-2}$ are bounded.

 From Proposition \ref{prop5.2}, \eqref{6.2} (with $m = 1$) and the assumption that $\Delta \leq \log N$, it follows that 
\begin{equation} 
\label{6.3} 
 \sum_{n \in{\mathcal A}_j(k)} F_j(n) e^{-n/N}  = 
 \sum_{n\in {\mathcal A}_j(k)} e^{-n/N} + O_{\eps}(N k_0^{\eps - 5/6}).
 \end{equation} 
 
It remains to evaluate the terms involving $F_j(n)^2$ in \eqref{6.1}.   Expanding out the square we have 
 \begin{equation}\label{10-t}
 \sum_{n\in {\mathcal A}_j(k)} F_j(n)^2 e^{-n/N} = \frac{1}{\pi^2 \gamma_W^2 |{\mathcal D}_j|^2} \sum_{d, {\widetilde d} \in {\mathcal D}_j} |D \tilde D|^{1/2} \sum_{n\in {\mathcal A}_j(k)} \frac{R_D(n) R_{\widetilde D}(n)}{\tau(\ns)^2 } e^{-n/N}. 
 \end{equation}
Here we separate the diagonal terms $d={\widetilde d}$ from the off-diagonal terms $d\neq {\widetilde d}$.  By Proposition \ref{prop5.4} we see that the contribution of the diagonal terms is bounded by 
$$ 
\ll \frac{N}{|{\mathcal D}_j|^2} \sum_{d\in {\mathcal D}_j} \Big( 2^k \gamma_W^{-1} \big( L(1,\chi_D)^{-1}k_0^{-1/2} + L(1,\chi_D)^{-2} k_0^{-2}\big) + 
 |D|^{1/2} (\log |D|)^3\Big).
$$
Using \eqref{5.6b}, \eqref{6.2}, the Mertens bound $\gamma_W \geq 1/\log W = (\log \Delta)^{-o(1)}$, and that $\Delta \geq k^3 2^k$,  the above is 
\begin{equation} 
\label{6.4} 
\ll 2^k N k_0^{-1/2}(\log \Delta)^{2+o(1)}\Delta^{-1} + N (\log \Delta)^{5}\Delta^{-1/2} 
\ll N k_0^{-1}.
\end{equation} 
As for the off-diagonal terms in \eqref{10-t}, using Proposition \ref{prop5.3} we see that their contribution is 
\begin{align*}
\frac{1}{|{\mathcal D}_j|^2} \sum_{\substack{ d\neq {\widetilde d} \in {\mathcal D}_j \\ d\equiv {\widetilde d}\ \md 8}} \bigg(\big( &
2^j  + O(W^{-1})\big) L(1, \chi_{d\widetilde d})\gamma_W \sum_{n\in {\mathcal A}_j(k)} e^{-n/N} \\
&+ NO_{\eps}\big(L(1,\chi_{d{\widetilde d}})^{-1} k_0^{\eps - 5/6} + L(1,\chi_D)^{-1} L(1,\chi_{\widetilde D})^{-1} k_0^{\eps - 3 }\big)\Big).
\end{align*} 
Now using Proposition \ref{prop5.5}, \eqref{6.2}, and the bound

$\gamma_W^{-1} \ll (\log \Delta)^{o(1)}$, the above is 
\begin{equation} 
\label{6.5} 
\big(1 +O (W^{-1}) \big) \sum_{n\in {\mathcal A}_j(k)} e^{-n/N} + O_{\eps}( N k_0^{\eps - 5/6}). 
\end{equation} 

Combining \eqref{6.4} and \eqref{6.5} we conclude that 
 $$ 
 \sum_{n\in {\mathcal A}_j(k)} F_j(n)^2 e^{-n/N} = \big(1 +O(W^{-1}) \big) \sum_{n\in {\mathcal A}_j(k)} e^{-n/N} + O_{\eps}( N k_0^{\eps - 5/6}).
 $$
Taken together with \eqref{6.3}, it follows that 
\[
\sum_{n\in {\mathcal A}_j(k)} (F_j(n)-1)^2 e^{-n/N}  \ll_{\eps}
W^{-1}  \sum_{n\in {\mathcal A}_j(k)} e^{-n/N} +  N k_0^{\eps - 5/6} \ll  W^{-1} |{\mathcal A}(N,k)|+ N k_0^{\eps - 5/6},  \]
in view of \eqref{partial-sum} and Lemma \ref{lem1.2}.  
Using this estimate in \eqref{6.1}, and recalling that $W = \log \log \log N = \log k_0$, Theorem \ref{thm5.1} follows. 

\section{Proof of Proposition \ref{prop5.5}} \label{sec7}

In the proof below it is convenient to set 
\[
K = (\log \Delta)^{20}, \qquad M = \Delta^{3/2}. 
\] 
Suppose $d$ and ${\widetilde d}$ are distinct elements in ${\mathcal D}_j$ with $d \equiv {\widetilde d} \ \md 8$.  Then 
$d{\widetilde d}$ is a square-free integer $\equiv 1 \ \md 8$, and is thus a fundamental discriminant.  
Since $d{\tilde d}\leq \Delta^2$, partial summation and  the P\'olya-Vinogradov inequality give
\begin{equation} 
\label{7.1} 
L(1,\chi_{d{\tilde d}}) = \sum_{n\leq M} \frac{\chi_{d{\tilde d}}(n)}{n} + \int_{M}^{\infty} \sum_{M< n\leq t} \chi_{d\tilde d}(n) \frac{dt}{t^2} 
= \sum_{n\leq M} \frac{\chi_{d{\tilde d}}(n)}{n} +  O(\Delta^{-1/4}). 
\end{equation}

We first show that (when summed over $d$ and ${\tilde d}$) the terms with $n > K$ contribute a negligible amount.  
Here we extend the sum over $d {\tilde d}$ to all discriminants below $\Delta^2$ that are $1 \ \md 8$.  Recall that a 
discriminant is an integer $\ell \equiv 0$ or $1 \ \md 4$, and that every discriminant $\ell$ may be written uniquely as 
$\ell_0 r^2$ where $\ell_0$ is a fundamental discriminant.  For every discriminant $\ell$ we may define the Kronecker symbol $\chi_\ell$ 
exactly as in Subsection 3.1, and it defines a quadratic character $\md \ell$, possibly imprimitive and induced from the primitive character 
$\chi_{\ell_0}$.  Thus,  using Cauchy-Schwarz, we find  
\[
\sum_{\substack{ d, \widetilde{d} \in {\mathcal D}_j \\ d\neq {\widetilde d}  \\ d\equiv {\widetilde d} \ \md 8}} \Big| \sum_{ K \leq n \leq M} \frac{\chi_{d{\tilde d}}(n)}{n}\Big| 
\leq \sum_{\substack{ d\leq \Delta^2 \\ d\equiv 1 \ \md 8}} \Big|  \sum_{ K \leq n \leq M} \frac{\chi_{d}(n)}{n}\Big|  
\leq \Delta \Big(  \sum_{\substack{ d\leq \Delta^2 \\ d\equiv 1 \ \md 8}} \Big|  \sum_{K \leq n \leq M} \frac{\chi_{d}(n)}{n}\Big|^2 \Big)^{1/2}. 
\]
Expanding the square, we obtain 
\begin{equation} 
\label{7.2} 
\ \sum_{\substack{ d\leq \Delta^2 \\ d\equiv 1 \ \md 8}} \Big|  \sum_{ K \leq n \leq M} \frac{\chi_{d}(n)}{n}\Big|^2  
= \sum_{K \leq n_1, n_2 \leq M } \frac{1}{n_1 n_2} \sum_{\substack{ d\leq \Delta^2 \\ d\equiv 1 \ \md 8}} \chi_d(n_1n_2). 
\end{equation} 

Write $n_1 n_2$ as $2^a n$ where $n$ is odd.  Since $d\equiv 1 \ \md 8$, $\chi_d(2)=1$, and therefore $\chi_d(n_1 n_2) = 
\chi_d(n)$ may also be expressed as the Jacobi symbol $(\frac{d}{n})$.   Now the Jacobi symbol $( \frac{\cdot}{n})$ is a quadratic 
character $\md n$, and is non-principal exactly when $n$ is not a square; or, in other words, when $n_1 n_2$ is neither a square 
nor twice a square.  Thus, when $n_1 n_2$ is neither a square nor twice a square we find by the P{\' o}lya-Vinogradov inequality 
$$ 
\sum_{\substack{ d\leq \Delta^2 \\ d\equiv 1 \ \md 8}} \Big( \frac{d}{n}\Big) 
= \sum_{8k+1 \leq \Delta^2} \Big(\frac{8k+1}{n}\Big) = \Big( \frac{8}{n}\Big) \sum_{k \leq (\Delta^2-1)/8} \Big(\frac{k+\overline{8}}{n}\Big) 
\ll \sqrt{n} \log n, 
$$ 
where $\overline 8$ denotes the inverse of $8$ modulo $n$.  If $n_1 n_2$ is a square or twice a square, then the inner sum over $d$ in \eqref{7.2} 
is clearly $O(\Delta^2)$.  Thus we obtain that the quantity in \eqref{7.2} is 
$$ 
\ll \Delta^2 \sum_{\substack{ K \leq n_1, n_2 \leq M \\ n_1 n_2 = \square,  2 \square}} \frac{1}{n_1 n_2}  + \sum_{K \leq n_1, n_2 \leq M } 
\frac{\sqrt{n_1 n_2} \log (n_1n_2)}{n_1  n_2}.
$$ 
The second term above is easily bounded by $\ll M\log M$.   Now consider the first term, where we handle the case $n_1 n_2 =m^2$ with the case $n_1 n_2 =2m^2$ treated 
in the same manner.  The terms $n_1 n_2 =m^2$ contribute, with $\tau(\cdot)$ denoting the divisor function  
$$ 
\leq \Delta^2 \sum_{K \leq m \leq M} \frac{\tau(m^2)}{m^2} \leq \frac{\Delta^2}{K} \sum_{m\leq M} \frac{\tau(m^2)}{m} \ll \frac{\Delta^2}{K} \prod_{p\leq M}\Big(\sum_{j=0}^{\infty} \frac{\tau(p^{2j})}{p^j} \Big) 
\ll \frac{\Delta^2}{K} (\log M)^3.
$$ 
We conclude that the quantity in \eqref{7.2} is 
$$ 
\ll \frac{\Delta^2}{K} (\log \Delta)^3 + M \log M \ll  \Delta^2 (\log \Delta)^{-10}. 
$$ 

Combining the above argument with \eqref{7.1} we find that 
\begin{equation} 
\label{7.4} 
\sum_{\substack{ d \neq \tilde{d} \in {\mathcal D}_j  \\ d\equiv {\tilde d} \ \md 8} } L(1,\chi_{d{\tilde d}}) = \sum_{\substack{ d \neq \tilde{d} \in {\mathcal D}_j  \\ d\equiv {\tilde d} \ \md 8} }  
\sum_{n\le K } \frac{\chi_{d {\tilde d}}(n)}{n}  +O\big( \Delta^2 (\log \Delta)^{-5} \big). 
\end{equation}  
To analyse the main term above, write $n\leq K$ uniquely as $n= frm^2$ where $f$ and $r$ are square-free with all prime factors of $f$ being below $W$ and 
all prime factors of $r$ being above $W$ (in particular, $r$ is odd, and note that $r$ could be $1$).  Note that for all $p$, $3\leq p\leq W$, we have $\chi_{d{\widetilde d}}(p) = \chi_{D}(p) \chi_{\widetilde D}(p) =1$.  
Since $d \equiv {\widetilde d} \ \md 8$, we have $d{\widetilde d} \equiv 1 \ \md 8$, and it follows also that $\chi_{d{\widetilde d}} (2) =1$.  Finally since $d$ and $\widetilde d$ 
are primes in the range $[\Delta/\log \Delta, \Delta]$, and $m^2 \leq n \leq K = (\log \Delta)^{20}$ we know that $(d{\widetilde d},m^2)=1$ and therefore $\chi_{d{\widetilde d}}(m^2) =1$.  
Thus $\chi_{d{\widetilde d}}(n)$ equals the Jacobi symbol $(\frac{d{\widetilde d}}{r})$, which for given $r$ is a quadratic character that is principal when $r=1$ and non-principal for 
$r>1$.   With this notation, the main term in \eqref{7.4} may be expressed as 
\begin{equation} 
\label{7.5} 
\sum_{n=fr m^2 \leq K} \frac{1}{n} \sum_{\substack{ d \neq \tilde{d} \in {\mathcal D}_j  \\ d\equiv {\tilde d} \ \md 8} } \Big( \frac{d{\widetilde d}}{r} \Big). 
\end{equation} 

We now show that the asymptotic in Proposition \ref{prop5.5} arises from the contribution of $r=1$ here, while the terms with $r >1$ contribute a negligible amount.  When $r=1$, note 
that $(\frac{d \widetilde d}{r}) =1$.  Since $d$ and ${\widetilde d}$ range over primes in $[\Delta/\log \Delta, \Delta]$ in suitable progressions modulo $8\prod_{3\leq p\leq W} p$,
and this modulus is $\leq e^{(1+o(1))W} = (\log \Delta)^{1+o(1)}$,  by the prime number theorem in arithmetic progressions it follows that 
$$ 
\sum_{\substack{ d \neq \tilde{d} \in {\mathcal D}_j  \\ d\equiv {\tilde d} \ \md 8} }1 = 2^{-j} |{\mathcal D}_j|^2  +O(\Delta^2 (\log \Delta)^{-10}). 
$$ 
When $j=0$ the condition $d \equiv {\widetilde d} \ \md 8$ is automatic, while when $j=1$ we only know from the definition that $d \equiv {\widetilde d} \ \md 4$ and the 
extra constraint $\md 8$ accounts for the factor $2^j=2$ above.  Now the unrestricted sum over $n$ satisfies 
$$ 
\sum_{n=fm^2 \ge 1}  \frac{1}{n} =\prod_{p\leq W} \big(1- p^{-1}\big)^{-1} \prod_{p>W} \big( 1- p^{-2}\big)^{-1} = \gamma_W^{-1} \big( 1+ O(W^{-1})\big), 
$$ 
while the tail  $\sum_{n= fm^2 >K} 1/n$ may be bounded by 
$$ 
\sum_{f|\prod_{p\leq W} p} \frac 1f \sum_{m \geq \sqrt{K/f}} \frac 1{m^2} \ll \frac{1}{\sqrt{K}} \sum_{f|\prod_{p\leq W} p} \frac{1}{\sqrt{f}} \ll \frac{\log \log N}{\sqrt{K}} \ll (\log \Delta)^{-9}. 
$$ 
We conclude that the terms with $r=1$ in \eqref{7.5} contribute 
$$ 
\Big( 2^{-j} |{\mathcal D}_j|^2  +O(\Delta^2(\log \Delta)^{-10})\Big) \Big( \gamma_W^{-1} \big( 1+ O(W^{-1}\big)+ O((\log \Delta)^{-9})\Big) 
$$
This is $2^{-j} |\mathcal{D}_j|^2   \gamma_W^{-1} ( 1+ O(W^{-1}))$, matching the expression in the proposition. 

It remains to show that the contribution to \eqref{7.5} of terms with $r>1$ is negligible.  Given $d\in {\mathcal D}_j$, consider the sum over ${\widetilde d}$ in \eqref{7.5}, which is 
$$ 
\sum_{\substack {\widetilde d \in {\mathcal D}_j \\ {\widetilde d}\neq d \\ {\widetilde d}\equiv d \ \md 8}} \Big( \frac{d{\widetilde d}}{r} \Big) 
= \Big( \frac{d}{r} \Big) \sum_{\substack {\widetilde d \in {\mathcal D}_j \\ {\widetilde d}\equiv d \ \md 8}} \Big( \frac{{\widetilde d}}{r} \Big)  + O(1)
=\Big( \frac{d}{r} \Big)  \sum_{ a\ \md r} \Big( \frac{a}{r} \Big) \sum_{\substack {\widetilde d \in {\mathcal D}_j \\ {\widetilde d}\equiv d \ \md 8\\ d\equiv a  \ \md r}} \!\!\!\! 1 +O(1). 
$$ 
Now the sum over ${\widetilde d}$ above counts primes in $[\Delta/\log \Delta,\Delta]$ lying in a suitable number of arithmetic progressions modulo
$8r \prod_{3\leq p\leq W} p$.  Since the modulus is $\ll K e^{(1+o(1))W} \leq (\log \Delta)^{22}$, an application of the prime number theorem in arithmetic 
progressions shows that the above equals
$$ 
\Big( \frac{d}{r} \Big)  \sum_{ a\ \md r} \Big( \frac{a}{r} \Big) \Big( \frac{1}{\phi(r)} \frac{|{\mathcal D}_j|}{2^j}  + O\big(\Delta(\log \Delta)^{-40}\big) \Big) 
= O\big( \Delta (\log \Delta)^{-20}\big), 
$$ 
upon noting that the main terms cancel (since $(\frac{\cdot}{r})$ is a non-principal character) and that $r\leq K = (\log \Delta)^{20}$.  Thus the contribution of the terms $r>1$ to \eqref{7.5} 
is 
$$ 
\ll \sum_{n \leq K} \frac{1}{n} |{\mathcal D}_j| \Delta (\log \Delta)^{-20} \ll \Delta^2(\log \Delta)^{-19}.
$$ 

Combining this with our evaluation of the terms with $r=1$, we conclude that the quantity in \eqref{7.5} is $2^{-j} \gamma_W^{-1} |{\mathcal D}_j|^2 (1+O(W^{-1}))$, and using this in 
\eqref{7.4} the proof of Proposition \ref{prop5.5} is complete.

\section{Class group  \texorpdfstring{$L$}{}-functions} \label{sec8-class-group}

We begin by recalling properties of class group $L$-functions over general number fields.   In our work we will only need the special cases of quadratic and biquadratic extensions.  
Let $K$ be a number field of degree $m$ and discriminant $D_K$.  Let $\Psi$ be a character of the class group of $K$, and let $L(s,\Psi)$ denote the corresponding $L$-function.  Recall that $L(s,\Psi)$ is defined by 
\begin{equation} 
\label{8.1} 
L(s, \Psi) = \sum_{\mathfrak a \neq 0} \Psi(\mathfrak a) N(\mathfrak a)^{-s} = \prod_{\mathfrak p} \big(1 - \Psi(\mathfrak p) N(\mathfrak p)^{-s} \big)^{-1}, 
\end{equation} 
where both the Dirichlet series and Euler product above converge absolutely in the half plane $\sigma > 1$.  In the half-plane $\sigma >1$, we define a 
holomorphic branch of $\log L(s,\Psi)$ by setting 
\begin{equation} 
\label{8.2} 
\log L(s,\Psi) = \sum_{\mathfrak p} \log \big( 1- \Psi(\mathfrak p) N(\mathfrak p)^{-s}\big)^{-1} = 
\sum_{\mathfrak p} \sum_{j=1}^{\infty} \frac{1}{j} \Psi(\mathfrak p)^j N(\mathfrak p)^{-js}.  
\end{equation} 
The Dirichlet series coefficients of $L(s,\Psi)$ are bounded in absolute value by the corresponding coefficients of the Dedekind zeta function $\zeta_K(s)$, which in turn 
are no more than the coefficients of $\zeta(s)^m$ (which has coefficients given by the $m$-divisor function).  Further, the coefficients of $\log L(s,\Psi)$ (as 
defined above) are supported on prime powers, and bounded in size by the coefficients of $\log \zeta_K(s)$ (defined as above for the principal character $\Psi_0$), and thus
 are no more than $m/j$ on the prime powers $p^j$.  In particular, we note that in the half-plane $\sigma >1$ 
 \begin{equation} 
 \label{8.3} 
|\log L(s,\Psi)| \leq m \log \zeta(\sigma) \leq m \log \Big( \frac{\sigma}{\sigma-1}\Big),
\end{equation} 
with the second bound being a standard bound for $\zeta$ (see for instance \cite[Corollary 1.4]{mv}).

We now collect together some classical bounds for $L(s,\Psi)$, along with describing a zero free region for $L(s,\Psi)$ and bounds for $|\log L(s,\Psi)|$ inside 
the zero free region.

\begin{lemma} \label{lem8.1}   Let $K$, $\Psi$, and $L(s,\Psi)$ be as above.  Then the following statements hold.
\begin{enumerate}\item  Suppose that $\Psi$ is not the principal character. Then $L(s,\Psi)$ extends to an entire function, and uniformly in the region $\sigma \geq 0$ satisfies the bound  
\begin{equation} 
\label{8.4} 
|L(\sigma+it,\Psi)| \ll_m \Big( (|D_K| (1+|t|)^m)^{(1-\sigma)/2} + 1 \Big) (\log (|D_K|(1+|t|)))^{m}. 
\end{equation}
For every $\eps >0$ there is a constant $C=C(m,\eps) >0$ such that the region 
\[
{\mathcal R}_0 = {\mathcal R}_0(\eps) = \{ \sigma \geq 1- C |D_K|^{-\eps}, \ \ |t| \leq |D_K| \} 
\]
is free of zeros of $L(s,\Psi)$. Thus $\log L(s,\Psi)$ extends analytically to the region ${\mathcal R}_0$, and moreover in the sub-region 
\[
{\mathcal R} = {\mathcal R}(\eps) =  \Big \{ \sigma \geq 1- \tfrac{1}{2} C |D_K|^{-\eps}, \ \ |t| \leq \tfrac{1}{2} |D_K| \Big\}
\]
we have the bound
\begin{equation}\label{8.5}
|\log L(s,\Psi)| \leq 6m\eps \log |D_K| + O_{m,\eps}(1). 
\end{equation}

\item Suppose that $\Psi$ is the principal character so that $L(s,\Psi)$ is the Dedekind zeta-function $\zeta_K(s)$ of the field $K$.  The 
Dedekind zeta function extends to a meromorphic function, with a single simple pole at $s=1$.    
The convexity bound \eqref{8.4} holds provided $|t| \geq 1$, while for $|t|\leq 1$ the same bound holds for $|(s-1)\zeta_K(s)|$. 
The region ${\mathcal R}_0$ is free of zeros of $\zeta_K(s)$, and the function $\log ((s-1)\zeta_K(s))$ extends analytically to the 
region ${\mathcal R}_0$.   The bound \eqref{8.5} holds for $|\log \zeta(s)|$ in the subregion ${\mathcal R}$ provided $|s-1| \geq 1$, 
and for points in ${\mathcal R}$ with $|s-1| \leq 1$ the same bound holds for $|\log ((s-1) \zeta_K(s))|$ instead. 
\end{enumerate}
\end{lemma}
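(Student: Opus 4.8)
\textbf{Proof plan for Lemma \ref{lem8.1}.}

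The plan is to assemble the lemma from classical ingredients in the Selberg style, treating parts (1) and (2) in parallel since the only genuine difference is the pole of $\zeta_K$ at $s=1$, which is removed throughout by the factor $(s-1)$. First I would establish the analytic continuation and the convexity bound \eqref{8.4}. Writing $\zeta_K(s) = \zeta(s) L(s,\Psi)$-type factorizations are not needed here; instead one uses that $L(s,\Psi)$ is a Hecke $L$-function, whose completed $L$-function $\Lambda(s,\Psi)$ (with suitable gamma factors and conductor $|D_K|$) is entire (for $\Psi \neq \Psi_0$) and satisfies a functional equation relating $s$ to $1-s$. The convexity bound then follows in the standard way: the bound is trivial for $\sigma \geq 1+\delta$ from absolute convergence, it follows on $\sigma = -\delta$ from the functional equation together with Stirling for the gamma factors, and the Phragm\'en--Lindel\"of principle interpolates across the critical strip; the $(\log(|D_K|(1+|t|)))^m$ factor absorbs the degree-$m$ divisor-type coefficients on the line $\sigma = 1$ (or $\sigma$ slightly above $1$ with a short argument pushing to $\sigma = 1$). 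For part (2) the same works with $(s-1)\zeta_K(s)$ in place of $L(s,\Psi)$, since multiplying by $(s-1)$ kills the pole while changing the order by a bounded amount.

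Next I would produce the zero-free region ${\mathcal R}_0$. The classical zero-free region for Hecke $L$-functions, obtained from the $3+4\cos\theta+\cos 2\theta \geq 0$ trick applied to $\zeta_K(\sigma)^3 |L(\sigma+it,\Psi)|^4 |L(\sigma+2it,\Psi^2)|$, gives a region of the shape $\sigma \geq 1 - c/\log(|D_K|(2+|t|))$ away from a possible Siegel zero. However, the statement here only asks for the weaker region $\sigma \geq 1 - C|D_K|^{-\eps}$, $|t| \leq |D_K|$, so I do not need the full log-free strength. In fact for the range $|t| \leq |D_K|$ one has $\log(|D_K|(2+|t|)) \ll \log|D_K|$, and the possible exceptional (Siegel) zero is real and of size at most $1 - c'|D_K|^{-\eps}$ by Siegel's theorem (applied to the relevant quadratic character dividing into $\zeta_K$, or directly via the Landau--Siegel bound for the class group $L$-function; since $\Psi$ quadratic gives the only real issue and $\Psi$ of higher order has no real zero near $1$ by a positivity argument). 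Choosing $C = C(m,\eps)$ small enough relative to Siegel's constant makes ${\mathcal R}_0$ zero-free; this step is ineffective in $C$, which is why the statement carries an unspecified constant. The analogous claim for $\zeta_K$ in part (2) is the same, noting the pole at $s=1$ sits at the boundary and is handled by passing to $(s-1)\zeta_K(s)$.

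Finally I would bound $|\log L(s,\Psi)|$ in the sub-region ${\mathcal R}$, which is where the specific constant $6m\eps$ appears. The mechanism is the Borel--Carath\'eodory lemma (or equivalently a Hadamard three-circles / Jensen-type argument): $\log L(s,\Psi)$ is analytic on ${\mathcal R}_0$, and on the larger region one has the upper bound $\Re \log L(s,\Psi) = \log|L(s,\Psi)| \ll_m m(1-\sigma)\log|D_K| + m\log\log|D_K| \ll_m m\eps\log|D_K|$ coming from the convexity bound \eqref{8.4} (the exponent $(1-\sigma)/2 \leq C|D_K|^{-\eps}/2$ contributes $\ll |D_K|^{C|D_K|^{-\eps}} = \exp(O(C|D_K|^{-\eps}\log|D_K|)) = 1 + o(1)$, so really the dominant contribution to the real part bound is the $(\log\cdots)^m$ factor, giving $\Re \log L \ll m\log\log|D_K|$; one has to be a little careful to actually extract a bound of the shape $6m\eps\log|D_K|$, which comes rather from applying Borel--Carath\'eodory on disks of radius comparable to $C|D_K|^{-\eps}$ and carefully tracking the constants — the reader should consult \cite{selberg-54} and the appendix). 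Apply Borel--Carath\'eodory with center a point of ${\mathcal R}$ and two concentric circles both inside ${\mathcal R}_0$, radii a constant multiple of $C|D_K|^{-\eps}$ apart; since $\log L$ vanishes (in a suitable normalization, as $\sigma \to \infty$ one has $\log L \to 0$, and one reduces to this by first bounding at a reference point with $\sigma$ large where $|\log L| \leq m\log\zeta(\sigma) = O_m(1)$ by \eqref{8.3}) one gets $|\log L(s,\Psi)| \ll (\text{radius ratio}) \cdot \sup \Re \log L$, and tuning the radii yields the clean constant $6m\eps$ up to $O_{m,\eps}(1)$. The version for $\log((s-1)\zeta_K(s))$ in part (2) is identical once one works with the entire function $(s-1)\zeta_K(s)$, splitting into $|s-1|\geq 1$ (where $\log(s-1)$ is harmlessly bounded and one may as well bound $\log\zeta_K$) and $|s-1|\leq 1$ (where one keeps the $(s-1)$ factor). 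The main obstacle is purely bookkeeping: getting the explicit constant $6m\eps$ rather than some unspecified $O_{m,\eps}(\eps\log|D_K|)$, which forces one to run Borel--Carath\'eodory with carefully chosen radii and to be honest about the contribution of the convexity bound near $\sigma = 1$; none of the individual ingredients (functional equation, Phragm\'en--Lindel\"of, $3$-$4$-$1$ zero-free region, Siegel, Borel--Carath\'eodory) is non-standard, and the more routine parts are deferred to Appendix \ref{app-A}.
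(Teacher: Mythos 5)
Your overall architecture matches the paper's: analytic continuation and convexity for the Hecke $L$-function (the paper cites Fogels's Lemma 4 rather than re-running Phragm\'en--Lindel\"of, and handles the possible real zero near $1$ via Fogels's reduction to Brauer's theorem rather than Dirichlet--Siegel, but these are interchangeable at the level of what the lemma asserts), followed by Borel--Carath\'eodory to get \eqref{8.5}. The parts on continuation, convexity, and the zero-free region ${\mathcal R}_0$ would go through as you describe.

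There is, however, a concrete gap in your derivation of \eqref{8.5}, precisely at the point you flag as delicate. You propose to make the Borel--Carath\'eodory center value negligible by ``reducing to a reference point with $\sigma$ large where $|\log L| = O_m(1)$,'' so that the final bound is (radius ratio)$\,\times\sup\Re\log L \ll m\log\log|D_K|$. This is incompatible with using disks of radius $\asymp C|D_K|^{-\eps}$: a disk centered at large $\sigma$ that reaches the points of ${\mathcal R}$ with $\sigma = 1-\tfrac12 C|D_K|^{-\eps}$ has radius $>1$, and since the outer disk must stay inside ${\mathcal R}_0$ (which extends only to $\sigma = 1-C|D_K|^{-\eps}$), the gap $R-r$ between the two radii is forced to be $O(|D_K|^{-\eps})$, making the radius ratio $2r/(R-r) \gg |D_K|^{\eps}$ and destroying the bound. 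The correct setup, and the paper's, is to center at $z_0 = 1+\tfrac{C}{2}|D_K|^{-\eps}+it$ with $r = C|D_K|^{-\eps}$ and $R=\tfrac32 C|D_K|^{-\eps}$, so that both ratios $2r/(R-r)=4$ and $(R+r)/(R-r)=5$ are absolute constants. But then the center value is \emph{not} $O(1)$: by \eqref{8.3} it is $m\log(\sigma_0/(\sigma_0-1)) = m\eps\log|D_K|+O_{m,\eps}(1)$, and it is this term, multiplied by $5$, that is the dominant source of the constant in \eqref{8.5}; the $\sup\Re\log L \ll m\log\log|D_K|$ term contributes only $4m\log\log|D_K|$, which is absorbed into the slack between $5m\eps$ and $6m\eps$. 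So the constant $6m\eps$ does not come from ``tuning the radii against $\sup\Re\log L$'' as you suggest, and as written your plan would either lose the bound entirely (large-$\sigma$ center) or leave the main term unaccounted for (small disks with the center value wrongly taken to be $O(1)$). The fix is exactly to use \eqref{8.3} at $z_0$ just to the right of the $1$-line; part (2) is then identical with $(s-1)\zeta_K(s)$ in place of $L(s,\Psi)$ near $s=1$, as you say.
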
 
\begin{proof}    Suppose first that $\Psi$ is nonprincipal.  
The analytic continuation of $L(s,\Psi)$ to the entire plane is due to Hecke (for a modern account, see for example Chapter 7 of \cite{Nark}).

The bound in \eqref{8.4} is a standard convexity bound, and for instance may be obtained 
from Lemma 4 of Fogels \cite{Fogels1}.    Fogels's paper \cite{Fogels1} established a classical zero-free region for $L(s,\Psi)$ of the form $\sigma \geq 1- c/\log (|D|(1+|t|))$ for a suitable constant $c>0$, when the character $\Psi$ is complex.   In the case of a real character $\Psi$, the same region is free of zeros of $L(s,\Psi)$ except for the possibility of a simple zero at $1-\delta$ for a real number $\delta$ (analogous to the Siegel zero for Dirichlet $L$-functions).  Analogously to the Brauer--Siegel theorem, Fogels \cite{Fogels2} shows, by reducing to 
Brauer's work, that $\delta \geq C(m, \eps) |D_K|^{-\eps}$.  Thus the region ${\mathcal R}_0$ is free of zeros of $L(s,\Psi)$.  

The bound \eqref{8.5} on $|\log L(s,\Psi)|$ in the narrower region ${\mathcal R}$ follows by an application of the Borel--Caratheodory lemma using the preliminary bounds 
\eqref{8.3} and \eqref{8.4}, as we shall now see.  Let $z_0 = 1+ \frac C2 |D_K|^{-\eps} +i t$ with $|t|\leq |D_K|/2$, and put $r= C|D_K|^{-\eps}$ and $R = \frac 32 C |D_K|^{-\eps}$.   The function $f(z) = \log L(z,\Psi)$ is holomorphic inside the circle of radius $R$ centered at $z_0$ (since this is contained in the region ${\mathcal R}_0$), and for $z$ inside this larger circle it satisfies the bound 
\[
\Re f(z) = \log |L(z,\Psi)| \leq m \log \log |D_K| + O_{m,\eps}(1),   
\]
 
since by \eqref{8.4} we have $|L(z,\Psi)| \ll_{m,\epsilon} (\log |D_K|)^m$.  Further, by \eqref{8.3} 
\[
|f(z_0)| = | \log L(1+\tfrac{1}{2} C |D_K|^{-\eps} + it, \Psi)| \leq m \eps \log |D_K| + O_{m,\eps}(1). 
\]
The Borel-Carath{\' e}odory lemma (see for example Section 5.5 of \cite{titchmarsh}) 
now shows that for $z$ inside the smaller circle $|z-z_0| \leq r$ one has 
\begin{align*}
|f(z)| &\leq \frac{2r}{R-r} \sup_{|z-z_0| \leq R}\Re f(z) + \frac{R+r}{R-r} |f(z_0)| \\
&\leq 4 m \log \log |D_K| + 5m \eps \log |D_K| + O_{m,\eps}(1) \le 6m \eps \log |D_K| + O_{m,\eps}(1). 
\end{align*} 
This establishes \eqref{8.5} for all $s= \sigma+it$ with $|t|\leq \frac{1}{2}|D_K|$ and $1-\frac{1}{2} C |D_K|^{\eps} \leq \sigma \leq 1+ \frac{3}{2}C |D_K|^{-\eps}$.  
When $\sigma > 1+\frac {3}{2}C |D_K|^{-\eps}$ (and $|t| \leq \frac{1}{2}|D_K|$) the bound in \eqref{8.5} follows at once from \eqref{8.3}, and this completes the proof in the 
case of nonprincipal $\Psi$.

The case when $\Psi$ is principal follows in the same way.  The only difference is that the Dedekind zeta function has a pole at $s=1$, so that near $1$ we 
deal with $(s-1) \zeta_K(s)$ instead.  
\end{proof}

To prove Propositions \ref{prop5.2}, \ref{prop5.3} and \ref{prop5.4} we shall make use of the expression \eqref{3.5} of $R_D(n)$ in terms of the coefficients of the class group $L$-functions $r(n,\psi)$.   As consequences of  Lemma \ref{lem8.1}, we now show that in such expressions the contribution of most class group characters $\psi$ is negligible.  
The main lemmas we will prove in this section are Lemmas \ref{lem8.2}, \ref{lem8.4} and \ref{lem8.5}. The analytic details are very similar across all three, so we will only give complete details in the proof of Lemma \ref{lem8.2}.

Recall the convention introduced in Section \ref{sec5}, namely that for integer $n$ we write $n = \nl \ns$, where $\nl$ has only prime factors $\leq W$, and $\ns$ only prime factors $>W$.

\begin{lemma}\label{lem8.2}   Let $N$ be large and $k$ be an integer in the range \eqref{1.4range-wide}.  Let $j$ be $0$ or $1$, and let $d$ be an element of ${\mathcal D}_j$ with $D$ denoting the corresponding fundamental discriminant.  Let $\psi$ be a non-principal class group character of the quadratic field $K=\Q(\sqrt{D})$.  Then 
\[
\sum_{n\in {\mathcal A}_j(k)} \frac{r(n,\psi)}{\tau(\ns)} e^{-n/N} \ll N(\log N)^{-100}.
\]
\end{lemma}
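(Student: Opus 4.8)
The plan is to detect the condition $n \in \mathcal{A}_j(k)$ (that is, $v_2(n) = j$ and $\Omega(n) = k$) by a contour integral, and then to use the zero-free region and bound \eqref{8.5} for $\log L(s,\psi)$ from Lemma~\ref{lem8.1} to estimate the resulting integral. First I would introduce a complex variable $z$ to keep track of $\Omega(n)$: for $|z| = 2$, say, one has
\[
\sum_{\substack{n \geq 1 \\ v_2(n) = j}} \frac{r(n,\psi)}{\tau(\ns)} z^{\Omega(n)} e^{-n/N} = \frac{1}{k!}\, ,
\]
no — rather, $[z^k]$ of the generating Dirichlet-type sum. Concretely I would write, via Cauchy's formula in $z$ and a Mellin transform in an auxiliary variable $s$,
\[
\sum_{n \in \mathcal{A}_j(k)} \frac{r(n,\psi)}{\tau(\ns)} e^{-n/N} = \frac{1}{2\pi i}\oint_{|z|=\rho} \frac{1}{2\pi i}\int_{(c)} G(z,s)\, \Gamma(s) N^s\, ds\, \frac{dz}{z^{k+1}},
\]
where $G(z,s) = \sum_{n,\ v_2(n)=j} r(n,\psi) z^{\Omega(n)} n^{-s}/\tau(\ns)$ is an Euler product. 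The key point is that $G(z,s)$ factors as an Euler product over primes $p$; the Euler factor at each prime $p$ involves $r(p^a,\psi)$, and since $r(n,\psi)$ is (up to the $2$-adic and small-prime issues) essentially multiplicative with $r(p,\psi) = 1 + \psi(\mathfrak{p}_1)\cdots$ summing over primes above $p$, the product $\prod_p (1 + z\, r(p,\psi) p^{-s} + \cdots)$ is comparable to $L(s,\psi)^z$ times a factor that is absolutely convergent and bounded in a region a bit to the left of $\Re s = 1$.

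The main step is then to shift the $s$-contour to $\Re s = 1 - \delta$ for a small $\delta$ (comfortably inside the zero-free region $\mathcal{R}_0(\eps)$ of Lemma~\ref{lem8.1}, since $|D| \leq \Delta \leq \log N$ so $|D|^{-\eps}$ is not too small relative to what we need, and crucially $|t| \leq |D|$ covers the range where $\Gamma(s)$ is not yet tiny). Because $\psi$ is \emph{non-principal}, $L(s,\psi)$ is entire, so there is no pole at $s = 1$ — this is exactly why we gain. On the shifted contour, \eqref{8.5} gives $|\log L(s,\psi)| \leq 6m\eps \log|D| + O_{m,\eps}(1)$ with $m = 2$, so $|L(s,\psi)^z| \leq \exp(O(|z| \eps \log |D|)) = |D|^{O(\eps)} \leq (\log N)^{O(\eps)}$, which is negligible against the saving. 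The $\Gamma(s) N^s$ factor contributes $N^{1-\delta}$ times a rapidly decaying function of $\Im s$, so the whole integral is $\ll N^{1-\delta} (\log N)^{O(\eps)} \times (\text{the } z\text{-integral})$. The $z$-integral over $|z| = \rho$ with a suitable $\rho$ of order $1$ (or perhaps $\rho$ slightly larger, balancing $\rho^{-k}$ against the growth of $G$ in $z$) contributes $\rho^{-k}$ times a harmless factor; since $k = k_0 + O(k_0^{2/3}) = O(\log\log N)$, the factor $\rho^{-k}$ is at most $(\log\log N)^{O(1)}$ or so, utterly dominated by $N^{-\delta}$. Collecting, the bound is $\ll N^{1-\delta/2} \ll N (\log N)^{-100}$ with room to spare.

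The one genuinely delicate point — the step I expect to be the main obstacle — is controlling the Euler product $G(z,s)$ uniformly: one must check that $G(z,s) / L(s,\psi)^z$ extends holomorphically and is bounded (polynomially in $|D|$, at worst) in the strip $1 - \delta \leq \Re s \leq 2$ for $|z|$ on the chosen circle. This requires handling the finitely many small primes $p \leq W$ separately (where $\ns$ vs.\ $\nl$ and the $2$-adic constraint $v_2(n) = j$ both intervene, and where $\tau(\ns)$ sits in the denominator), and checking that at the ramified prime dividing $D$ and at $p = 2$ the local factors cause no trouble. One also needs a clean definition of the fractional power $L(s,\psi)^z$, which is legitimate precisely because $\log L(s,\psi)$ is single-valued and bounded on $\mathcal{R}_0$ by Lemma~\ref{lem8.1}. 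Once these local bookkeeping issues are dispatched, the contour shift and the crude bound \eqref{8.5} do all the work, and the estimate $N(\log N)^{-100}$ (indeed any fixed negative power of $\log N$) drops out immediately. Since the analytic mechanism here is identical to what will be needed for Lemmas~\ref{lem8.4} and~\ref{lem8.5}, it makes sense to set up the Euler-product-plus-contour framework carefully once, as the excerpt promises to do.
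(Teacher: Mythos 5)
Your proposal follows essentially the same route as the paper's: this is Selberg's method --- introduce $z^{\Omega(n)}$, write the generating Dirichlet series as $L(s,\psi)^z$ times an Euler product $\mathcal{G}(s;z,j)$ converging absolutely to the left of $\Re s = 1$, shift the $s$-contour into the Fogels zero-free region of Lemma \ref{lem8.1} (the paper takes $\Re s = 1-(\log N)^{-1/2}$, truncates at $|\Im s|\leq(\log\log N)^2$ using the decay of $\Gamma$, works with $|z|=1$, and recovers $\Omega(n)=k$ by Fourier inversion), and use \eqref{8.5} to bound $|L(s,\psi)^z|$ by a small power of $\log N$. The local-factor bookkeeping at $p\leq W$ and $p=2$ that you flag as the one delicate point is exactly what the paper carries out, obtaining $|\mathcal{G}(s;z,j)|\ll\exp(W^{1/4})$ for $\Re s\geq 3/4$, so your plan is sound.
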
 
\begin{proof} 
The key idea here and in the proofs of Lemmas \ref{lem8.4} and \ref{lem8.5} is to follow Selberg \cite{selberg-54} and introduce, for any $z \in \C$ with $|z| = 1$, the Dirichlet series
\[
\mathcal{F}(s;z,j) := \sum_{v_2(n) = j} \frac{r(n,\psi)}{\tau(\ns)} z^{\Omega(n)} n^{-s}.  
\]
Later, we will recover the condition $\Omega(n) = k$ (which defines the set $\mathcal{A}_j(k)$) by Fourier inversion.

Since (by \eqref{3.3}, \eqref{3.4}) $|r(n,\psi)| \leq \tau(n)$ we see that ${\mathcal F}(s,z,j)$ converges absolutely in the half plane Re$(s) =\sigma >1$, and further satisfies the bound 
\begin{equation} 
\label{8.7} 
|{\mathcal F}(s; z,j)| \leq \sum_{n=1}^{\infty} \tau(n) n^{-\sigma} = \zeta(\sigma)^2 \leq \Big( \frac{\sigma}{\sigma-1}\Big)^2,
\end{equation} 
using \cite[Corollary 1.4]{mv} in the last step.
Further by Mellin inversion we have, 
 setting $c= 1+1/\log N$,
\begin{equation} \label{8.7a}
\sum_{n\in {\mathcal A}_j(k)} \frac{r(n,\psi)}{\tau(\ns)}z^{\Omega(n)} e^{-n/N} = \frac{1}{2\pi i} \int_{c-i\infty}^{c+i\infty} {\mathcal F}(s; z,j) N^s \Gamma(s) ds.
\end{equation} 
By Stirling's formula $|\Gamma(\sigma+ it)| \ll (1+|t|)^{\sigma-1/2} e^{-\pi |t|/2}$ uniformly for $\sigma$ in bounded intervals (see, for instance, (C.19) of 
\cite{mv}).  Combining this with the bound \eqref{8.7} we find that the tails of the integral in \eqref{8.7a} above where $|\text{Im}(s)| \geq (\log \log N)^2$ contribute 
\[
\ll \int_{|t| > (\log \log N)^2} N^c (\log N)^2 (1+|t|)^{c-1/2} e^{-\pi |t|/2} dt \ll N (\log N)^{-100}. 
\]
Thus, writing $T= (\log \log N)^2$, 
\begin{equation} 
\label{8.8}  
\sum_{n\in {\mathcal A}_j(k)} \frac{r(n,\psi)}{\tau(\ns)} z^{\Omega(n)}e^{-n/N} = \frac{1}{2\pi i} \int_{c-iT}^{c+iT} {\mathcal F}(s; z,j) N^s \Gamma(s) ds + O\big( N (\log N)^{-100}\big).  
\end{equation} 

To estimate the truncated integral here, we shall extend ${\mathcal F}(s;z,j)$ analytically a little to the left of the $1$-line and shift contours.  
To extend ${\mathcal F}(s;z,j)$ analytically we shall compare it with $L(s,\psi)^z$.  Note that when Re$(s) >1$ we may define $L(s,\psi)^z$ by 
the Euler product $\prod_{\mathfrak p} (1- \psi(\mathfrak p)/N(\mathfrak p)^s)^{-z}$, and this product converges absolutely when Re$(s)>1$.  Further 
we may extend $L(s,\psi)^z$ analytically to a wider region by writing it as $\exp(z\log L(s,\psi))$ and using the analytic continuation described in Lemma \ref{lem8.1}.   
Thus, define 
\[
{\mathcal G}(s;z,j) =  {\mathcal F}(s;z,j) L(s, \psi)^{-z}, 
\]
which is, to start with, analytic in the half-plane $\sigma >1$.  The definition of ${\mathcal F}(s;z,j)$ permits us (in this region) to write ${\mathcal G}(s;z,j)$ as an Euler 
product $\prod_p {\mathcal G}_p(s;z,j)$, whose factors we now describe.  For $p>W$ we have 
\begin{align*}
{\mathcal G}_p(s;z,j) &= \Big( \sum_{j=0}^{\infty} z^j r(p^j,\psi) p^{-js} \Big) \prod_{ {\mathfrak p}|p} \big(1 -\psi(\mathfrak p)N(\mathfrak p)^{-s} \big)^z 
\\
&= \Big( 1+  zr(p,\psi) p^{-s} + O(p^{-2\sigma}) \Big) \big( 1 - z p^{-s} \sum_{N(\mathfrak p) =p} \psi(\mathfrak p) + O( p^{-2\sigma})\big) 
\\
&=  1+ O(p^{-2\sigma}). 
\end{align*} 
 For $p$ with $3\leq p\leq W$ we have 
\[
 {\mathcal G}_p(s;z,j) = \Big( \sum_{j=0}^{\infty} \frac{r(p^j,\psi)}{(j+1)} z^j p^{-js}\Big) \prod_{ {\mathfrak p}|p} \big(1 -\psi(\mathfrak p)N(\mathfrak p)^{-s} \big)^z  
 = 1+ O(p^{-\sigma}).  
 \]
 Finally for $p=2$ we have 
 \[
 {\mathcal G}_2(s;z,j) = \left\{\begin{array}{ll} 
 z 2^{-s-1} r(2,\psi) \prod_{\mathfrak p| 2} \big(1- \psi(\mathfrak p)N(\mathfrak p)^{-s}\big)^{z} &\text{ if  } j=1 \\ 
 \prod_{\mathfrak p| 2} \big(1- \psi(\mathfrak p) N(\mathfrak p)^{-s}\big)^{z}  &\text{ if } j =0,
 \end{array}\right. 
 \]
 and in both cases this is $1+O(2^{-\sigma})$.  From these remarks we see that the Euler product ${\mathcal G}_p(s;z,j)$ 
 converges absolutely in the region $\Re(s)>\frac{1}{2}$, and defines a holomorphic function of $s$ in that region.  Moreover, in the region 
 $\sigma \geq \frac{3}{4}$, we have the bound 
 \begin{equation} 
 \label{8.10} 
 |{\mathcal G}(s;z,j)| \ll \prod_{p\leq W} \big( 1 + O(p^{-3/4})\big) \ll \exp( W^{1/4}). 
 \end{equation} 
 
For the rest of the paper, we fix the domain
\begin{equation}\label{w-def-first}  \mathcal{W} := \{s \in \C:  1 - 2(\log N)^{-1/2} < \Re s < 2,\; |\Im s| < 2(\log \log N)^2\} .\end{equation} 
 
Applying Lemma \ref{lem8.1} 
with $\eps =\frac{1}{100}$ (and $m=2$), we see that (keeping in mind $(\log N)^{1/2} \leq |D| \leq \log N$) the function  $\log L(s,\psi)$ is analytic in $\mathcal{W}$, and satisfies $|\log L(s,\psi)| \leq \frac 18 \log |D| +O(1)$ 
here.  Therefore 
\[
\mathcal{F}(s;z,j)  = \exp(z \log L(s,\psi)) {\mathcal G}(s;z,j) 
\]
is also analytic in $\mathcal{W}$ and by \eqref{8.10} satisfies in this region 
\[
|{\mathcal F}(s;z,j)| \ll \exp\Big( \tfrac{1}{8} \log |D| + W^{1/4} \Big) \ll \log N. 
\]
We now return to the integral in \eqref{8.8}, and replace the line of integration from $c-iT$ to $c+iT$ by integrals along the following three line segments: 
(i) the horizontal line segment from $c-iT$ to $1-(\log N)^{-1/2} -iT$, (ii) the vertical line segment from $1-(\log N)^{-1/2} -iT$ to $1-(\log N)^{-1/2} +iT$, and (iii) 
the horizontal line segment from $1-(\log N)^{-1/2} +iT$ to $c+iT$.  On the horizontal line segment (i) we may bound the integral by
$$ 
\ll \int_{1-(\log N)^{-1/2}}^{c} N^{\sigma} (\log N) |\Gamma(\sigma -iT)| d\sigma \ll N (\log N) e^{-T} \ll N(\log N)^{-100}, 
$$ 
upon using $|\Gamma(\sigma -iT)| \ll T^{\sigma -1/2} e^{-\pi T/2} \ll e^{-T}$.  Naturally the same estimate applies to the integral on the horizontal line segment in (iii).  
As for the vertical line segment (ii), the integral here is 
\begin{align*}
&\ll N^{1- (\log N)^{-1/2}} (\log N) \int_{-T}^{T} |\Gamma( 1-(\log N)^{-1/2} +it)| dt \\
&\ll N (\log N) \exp(-\sqrt{\log N}) \ll N(\log N)^{-100}. 
\end{align*} 
Putting all this together and recalling \eqref{8.1}, we conclude that uniformly for $|z|=1$  
\[
\sum_{v_2(n) = j} \frac{r(n,\psi)}{\tau(\ns)} z^{\Omega(n)}  e^{-n/N} \ll N(\log N)^{-100}.
\]
By Fourier inversion 
\[
\sum_{n \in {\mathcal A}_j(k)} \frac{r(n,\psi)}{\tau(\ns)} = \frac{1}{2\pi} \int_{0}^{2\pi} e^{-ik\theta} d\theta \sum_{v_2(n) = j} \frac{r(n,\psi)}{\tau(\ns)} e^{i\theta \Omega(n)}  e^{-n/N} \ll N(\log N)^{-100}, 
\]
and the proof of Lemma \ref{lem8.2} is complete.
\end{proof}

To state the other two lemmas of this section, we first need to isolate the \emph{genus characters} which play a special role.   The genus characters for a quadratic field are the class group characters that take only the real values $\pm 1$. We will only need to know what these are in the case $d$ prime, in which case the classification is as follows.

\begin{proposition}\label{prop27}
Let $d$ be an odd prime, and let $D$ be the associated fundamental discriminant as in \eqref{3.6}. Let $K = \Q(\sqrt{D})$. 
\begin{enumerate}
\item If $d \equiv 3 \ \md{4}$, so $D = -d$, then there is only one genus character in $\widehat{C}_K$, namely the principal character $\psi_0$. The corresponding class group $L$-function is the Dedekind zeta function of $K$, given by 
$$
L_K(s,\psi_0) = \zeta_K(s) = \zeta(s) L(s,\chi_{D}).
$$ 
\item If $d \equiv 1\  \md{4}$, so $D = -4d$, then there are two genus characters in $\widehat{C}_K$: the principal character $\psi_0$, whose $L$-function is equal to the Dedekind zeta-function of $K$ as above, and a nontrivial genus character $\psi_1$. On prime ideals $\mathfrak{p}$, $\psi_1$ is given by $\psi_1(\mathfrak{p}) = \chi_{-4}(N\mathfrak{p})$ if $\mathfrak{p}$ lies above an odd prime, and $\psi(\mathfrak{p}) = \chi_{d}(2)$ if $\mathfrak{p}$ is the \textup{(}unique, ramified\textup{)} prime ideal above $2$. The corresponding $L$-function is given by $L_K(s,\psi_1) = L(s,\chi_{-4}) L(s,\chi_{d})$.
\end{enumerate}
\end{proposition}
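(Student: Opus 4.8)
The plan is to deduce everything from classical genus theory (as developed in \cite{cox}): the genus characters of $K = \Q(\sqrt D)$ are in bijection with the unordered factorizations $D = D_1 D_2$ of the fundamental discriminant $D$ into a product of two coprime fundamental discriminants (the factorization $D = 1 \cdot D$ being allowed, and corresponding to $\psi_0$), and the genus character $\psi$ attached to such a factorization satisfies $L_K(s,\psi) = L(s,\chi_{D_1}) L(s,\chi_{D_2})$. Equivalently, on a prime ideal $\mathfrak p$ lying above a rational prime $p$ with $p\nmid D_1$ one has $\psi(\mathfrak p) = \chi_{D_1}(N\mathfrak p)$, and symmetrically $\psi(\mathfrak p) = \chi_{D_2}(N\mathfrak p)$ whenever $p \nmid D_2$; the two formulas agree whenever $p$ is unramified in $K$, since then $\chi_{D_1}(N\mathfrak p)\chi_{D_2}(N\mathfrak p) = \chi_D(N\mathfrak p) = 1$.

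Granting this, the proof reduces to enumerating prime-discriminant factorizations in the two congruence classes. If $d \equiv 3 \ \md{4}$ then $D = -d = (-1)^{(d-1)/2}d$ is itself a prime discriminant, which admits no nontrivial factorization into coprime fundamental discriminants; hence $\psi_0$ is the only genus character, and $L_K(s,\psi_0) = \zeta_K(s) = \zeta(s)L(s,\chi_D)$ is just the factorization of the Dedekind zeta function recorded in Section~\ref{sec3}. If $d \equiv 1 \ \md{4}$ then $D = -4d$, and here $-4$ and $d$ are coprime fundamental discriminants with product $D$, so the only factorizations are $D = 1 \cdot D$ (giving $\psi_0$) and $D = (-4)\cdot d$ (giving a second, necessarily nonprincipal, genus character $\psi_1$). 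Applying the recipe with $D_1 = -4$, $D_2 = d$ gives exactly the stated description of $\psi_1$: on a prime $\mathfrak p$ above an odd rational prime one has $\psi_1(\mathfrak p) = \chi_{-4}(N\mathfrak p)$ (this covers the ramified prime above $d$ as well, where $N\mathfrak p = d$ and $\chi_{-4}(d)=1$ since $d\equiv 1 \ \md 4$), while at the unique ramified prime above $2$, where $2 \mid D_1$ and the value must therefore be read off from $\chi_{D_2} = \chi_d$, one has $\psi_1(\mathfrak p) = \chi_d(2)$; and $L_K(s,\psi_1) = L(s,\chi_{-4})L(s,\chi_d)$.

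To make this self-contained one needs to justify the two genus-theory inputs in precisely these cases. The count of genus characters is the standard isomorphism $C_K/C_K^2 \cong (\Z/2\Z)^{t-1}$ with $t$ the number of rational primes dividing $D$; here $t=1$ for $D = -d$ and $t=2$ for $D = -4d$. That $\mathfrak a \mapsto \chi_{-4}(N\mathfrak a)$ (defined on ideals of odd norm) descends to a character of $C_K$ comes down to triviality on principal ideals: since $D = -4d$ we have $\mathcal O_K = \Z[\sqrt{-d}]$ and $N(a+b\sqrt{-d}) = a^2+db^2 \equiv a^2+b^2 \ \md 4$; if this is odd then exactly one of $a,b$ is odd, so $a^2+b^2\equiv 1 \ \md 4$ and the value is $1$. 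The map is real-valued, hence a genus character, and it is nonprincipal because there is a rational prime $p$ with $\chi_{-4}(p) = \chi_d(p) = -1$ (so $\chi_D(p)=1$, $p$ splits, and the prime above it has $\psi_1$-value $-1$); such $p$ exists by Dirichlet's theorem, since the condition $p\equiv 3 \ \md 4$ and the condition that $p$ be a quadratic non-residue modulo $d$ are independent.

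Finally the $L$-function identity---which, conversely, is what pins down the ramified values of $\psi_1$ if one prefers not to invoke the general recipe---follows by matching Euler factors prime by prime: at a split prime $p$ one has $\chi_{-4}(p)=\chi_d(p)$ and both sides contribute $(1-\chi_{-4}(p)p^{-s})^{-2}$; at an inert prime $\chi_{-4}(p)\chi_d(p)=-1$ and both sides contribute $(1-p^{-2s})^{-1}$; and at $p=2$ and $p=d$ one of the two characters vanishes, leaving a single factor that matches $(1-\psi_1(\mathfrak p)N\mathfrak p^{-s})^{-1}$ with $\psi_1(\mathfrak p)$ as above. The only step requiring any care is this bookkeeping at the ramified primes $2$ and $d$; I do not anticipate any genuine obstacle.
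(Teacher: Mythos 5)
Your proposal is correct and follows essentially the same route as the paper: both classify genus characters by the factorizations of $D$ into a product of coprime fundamental discriminants, use the recipe $\psi(\mathfrak p) = \chi_{D_1}(N\mathfrak p)$ off the ramified primes, and invoke the Kronecker factorization $L_K(s,\psi) = L(s,\chi_{D_1})L(s,\chi_{D_2})$. The paper simply cites \cite[Chapter 12, Satz 2]{zagier} for these facts, while you additionally supply the direct verifications (the $(\Z/2\Z)^{t-1}$ count, the descent of $\chi_{-4}(N\cdot)$ to $C_K$, nonprincipality via Dirichlet, and Euler-factor bookkeeping); these are all correct.
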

\begin{proof}
There is a bijective correspondence between genus characters of imaginary quadratic fields of discriminant $D$ and factorizations $D = D' \cdot D''$ into fundamental discriminants, with the decomposition $D = 1 \cdot D$ being allowed, and with decompositions different only in the order of $D, D''$ being considered equivalent. See \cite[Chapter 12, Satz 2]{zagier} for a discussion of this, as well as a discussion of how to compute these factorisations in terms of the factorisation of $D$ into \emph{prime discriminants}. For us, the factorisations can easily be computed by hand: if $d \equiv 3\  \md{4}$ then there is only the trivial factorization, whilst when $d \equiv 1\  \md{4}$, in which case $D = -4d$, we additionally have the factorisation in which $D' = -4$ and $D'' = d$. 

In \cite[Chapter 12, Satz 2]{zagier} one may also find the description of a genus character $\psi$ corresponding to a given factorization $D = D' \cdot D''$: it is given on prime ideals by
\[ \psi(\mathfrak{p}) = \left\{ \begin{array}{ll}   \chi_{D'} (N\mathfrak{p}) & (N\mathfrak{p}, D') = 1\\ \chi_{D''}(N\mathfrak{p}) & (N\mathfrak{p}, D'') = 1\end{array} \right.    \] (That this is well-defined is part of the statement.)
We have the \emph{Kronecker factorization} of $L$-functions
\[ L_K(s,\psi) = L(s, \chi_{D'}) L(s, \chi_{D''}).\]
Specialising to the specific case $D' = -4$, $D'' = d$ gives the stated result.
\end{proof}

\begin{lemma} \label{lem8.4}  Let $N$ be large and $k$ be an integer in the range \eqref{1.4range-wide}.  Let $j$ be $0$ or $1$, and let $d$ and ${\widetilde d}$ be two distinct elements of ${\mathcal D}_j$ with $D$ and ${\widetilde D}$ denoting the corresponding fundamental discriminants.  Let $\psi$ and ${\widetilde \psi}$ be characters of the class groups of $K =\Q(\sqrt{D})$ and ${\widetilde K} = \Q(\sqrt{{\widetilde D}})$ respectively.  Then
\[
\sum_{n \in {\mathcal A}_j(k) } \frac{r(n,\psi) r(n, {\widetilde \psi})}{\tau(\ns)^2} e^{-n/N } \ll N(\log N)^{-100}
\]
unless \textup{(i)} $\psi$ and ${\widetilde \psi}$ are the principal characters in their respective class groups, or \textup{(ii)} both $\psi$ and ${\widetilde \psi}$ are the non-principal genus character in their respective class groups \textup{(}and this possibility occurs only in the case $j=1$\textup{)}.  
\end{lemma}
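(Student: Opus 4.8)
The plan is to mimic Selberg's method exactly as in the proof of Lemma~\ref{lem8.2}, the only new feature being that the relevant Dirichlet series must be compared with a \emph{Rankin--Selberg} convolution rather than a single $L$-function. For $z\in\C$ with $|z|=1$ introduce
\[
\mathcal{F}(s;z,j):=\sum_{v_2(n)=j}\frac{r(n,\psi)\,r(n,\widetilde\psi)}{\tau(\ns)^2}\,z^{\Omega(n)}\,n^{-s},
\]
which, since $|r(n,\psi)|,|r(n,\widetilde\psi)|\leq\tau(n)$, converges absolutely and is $\ll\zeta(\sigma)^4$ in the half-plane $\Re s=\sigma>1$. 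Exactly as in \eqref{8.7a}--\eqref{8.8}, Mellin inversion (with the same truncation of the $\Gamma$-integral to $|\Im s|\leq T:=(\log\log N)^2$) followed by Fourier inversion in $z=e^{i\theta}$ reduces the lemma to the estimate $\int_{c-iT}^{c+iT}\mathcal{F}(s;z,j)N^s\Gamma(s)\,ds\ll N(\log N)^{-100}$, uniformly for $|z|=1$, with $c=1+1/\log N$.

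To continue $\mathcal{F}$ to the left of $\Re s=1$ we compare it with the Rankin--Selberg convolution $L(s,\psi\times\widetilde\psi)$, whose Euler factor at a prime $p\nmid 2d\widetilde d$ is $\prod_{i,j}(1-\alpha_i\beta_j p^{-s})^{-1}$, the $\alpha_i$ and $\beta_j$ being the local parameters of $L(s,\psi)$ and $L(s,\widetilde\psi)$ (so $\alpha_1\alpha_2=\beta_1\beta_2=1$ at split primes). Setting $L(s,\psi\times\widetilde\psi)^{-z}:=\exp(-z\log L(s,\psi\times\widetilde\psi))$ and $\mathcal{G}(s;z,j):=\mathcal{F}(s;z,j)L(s,\psi\times\widetilde\psi)^{-z}$, the classical Rankin--Selberg unfolding $\sum_{a\geq0}r(p^a,\psi)r(p^a,\widetilde\psi)x^a=(1-x^2)\prod_{i,j}(1-\alpha_i\beta_j x)^{-1}$ shows, after the usual expansion, that the $p$-th Euler factor of $\mathcal{G}$ is $1+O(p^{-2\sigma})$ for $p>W$ and $1+O(p^{-\sigma})$ for $p\leq W$ (the loss for small $p$ coming, as in Lemma~\ref{lem8.2}, from the $\tau(\ns)^2$ in the denominator), while the finitely many Euler factors at $2,d,\widetilde d$ contribute bounded nonvanishing holomorphic factors; hence $\mathcal{G}(s;z,j)$ is holomorphic and $\ll\exp(O(W^{1/4}))\ll(\log N)^{o(1)}$ in the half-plane $\Re s\geq\frac34$. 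The point of the comparison is that $L(s,\psi\times\widetilde\psi)$ coincides, up to those same bad Euler factors and a harmless $\zeta(2s)$-type factor, with the class group $L$-function $L(s,\Psi)$ of the biquadratic field $M=\Q(\sqrt D,\sqrt{\widetilde D})$, for the character $\Psi\in\widehat{C}_M$ obtained by pulling $\psi$ and $\widetilde\psi$ back along the norm maps $N_{M/K}$ and $N_{M/\widetilde K}$ and multiplying (this is automorphic induction, and may also be checked directly on Euler factors). Since $|D_M|\ll|D\widetilde D|^2\ll(\log N)^4$, Lemma~\ref{lem8.1} applies to $L(s,\Psi)$ with $m=4$ and $\eps=\tfrac1{400}$.

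It remains to dichotomize according to whether $\Psi$ is principal. Using the Kronecker factorizations of the genus characters recorded in Proposition~\ref{prop27}, one checks that $L(s,\psi\times\widetilde\psi)=L(s,\Psi)$ acquires a pole at $s=1$ precisely in cases (i) and (ii) of the lemma. Indeed, if $\psi$ or $\widetilde\psi$ fails to be a genus character, then $L(s,\psi\times\widetilde\psi)$ is the Rankin--Selberg convolution of two automorphic $L$-functions (Eisenstein, or cuspidal with complex multiplication by $K$, respectively $\widetilde K$); as $K\neq\widetilde K$ these are never twist-equivalent, so no pole occurs. If both $\psi$ and $\widetilde\psi$ are genus characters, then $L(s,\psi\times\widetilde\psi)$ factors as $\prod_{i,j}L(s,\chi_i\widetilde\chi_j)$ over the Kronecker factors of $L(s,\psi)$ and $L(s,\widetilde\psi)$, and some $\chi_i\widetilde\chi_j$ is principal exactly when $\{\psi,\widetilde\psi\}=\{\psi_0,\widetilde\psi_0\}$ or --- only possible when $j=1$, since both factorizations then contain the common factor $\chi_{-4}$ --- when $\{\psi,\widetilde\psi\}=\{\psi_1,\widetilde\psi_1\}$. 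Outside cases (i) and (ii), $L(s,\Psi)$ is therefore entire, so by Lemma~\ref{lem8.1}(1) it is zero-free in the domain $\mathcal{W}$ of \eqref{w-def-first}, with $|\log L(s,\psi\times\widetilde\psi)|\leq 24\eps\log|D_M|+O(1)\ll\log\log N$ there. Consequently $\mathcal{F}(s;z,j)=\exp(z\log L(s,\psi\times\widetilde\psi))\,\mathcal{G}(s;z,j)$ is holomorphic and $\ll\log N$ throughout $\mathcal{W}$, and shifting the contour in the truncated integral to $\Re s=1-(\log N)^{-1/2}$ --- the horizontal and vertical segments being estimated exactly as in Lemma~\ref{lem8.2} using the rapid decay of $\Gamma$ together with this bound on $\mathcal{F}$ --- gives the required $\ll N(\log N)^{-100}$ (in cases (i) and (ii), $L(s,\psi\times\widetilde\psi)$ has a simple pole at $s=1$ and the argument, correctly, does not apply).

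The main obstacle is the bookkeeping of the third paragraph: correctly identifying $L(s,\psi\times\widetilde\psi)$ with the biquadratic class group $L$-function $L(s,\Psi)$ --- in particular tracking the $\zeta(2s)$-type and bad Euler factors so that they are absorbed harmlessly into $\mathcal{G}$ --- and pinning down exactly when $\Psi$ is principal. The genus-theoretic case analysis is elementary given Proposition~\ref{prop27}, but must be carried out with care, and one must separately confirm that the Euler factors at $2$, $d$ and $\widetilde d$ introduce neither a pole nor a zero of $L(s,\psi\times\widetilde\psi)$ inside $\mathcal{W}$.
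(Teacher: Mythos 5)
Your proof is correct, and its analytic core is the same as the paper's: you compare the Dirichlet series with the $L$-function of the class group character $\Psi=(\psi\circ N_{M/K})\cdot(\widetilde\psi\circ N_{M/\widetilde K})$ of the biquadratic field $M=\Q(\sqrt D,\sqrt{\widetilde D})$ (your Rankin--Selberg convolution $L(s,\psi\times\widetilde\psi)$ is exactly $L(s,\Psi)$ up to bad Euler factors), verify that the ratio $\mathcal G$ is holomorphic and $\ll\exp(W^{1/4})$ for $\Re s\geq 3/4$, and run the same contour shift using Lemma \ref{lem8.1}. Where you genuinely diverge is the classification of when $\Psi$ is principal: the paper quotes Diao's Lemma 6 to reduce to the case where both $\psi,\widetilde\psi$ are genus characters and then checks the remaining cases by hand, whereas you argue via automorphy --- cuspidality of $\theta_\psi$ for non-genus $\psi$, the Rankin--Selberg pole criterion, and the Kronecker factorization of Proposition \ref{prop27} in the genus case. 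This makes the step self-contained at the cost of heavier machinery. Two small corrections: the local identity should read $\sum_a r(p^a,\psi)r(p^a,\widetilde\psi)x^a=(1-\chi_D(p)\chi_{\widetilde D}(p)x^2)\prod_{i,j}(1-\alpha_i\beta_jx)^{-1}$, not $(1-x^2)\prod(\cdots)^{-1}$ (harmless, since the discrepancy sits at order $p^{-2s}$ and is absorbed into $\mathcal G$); and the criterion for a pole of $L(s,f\times g)$ at $s=1$ is $g\cong\bar f$, not failure of twist-equivalence --- in the cuspidal--cuspidal case the cleanest way to rule this out is that the nebentypus characters $\chi_D$ and $\chi_{\widetilde D}$ are distinct.
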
 
\begin{proof}   Given two characters $\psi \in {\widehat C}_K$ and ${\widetilde \psi} \in {\widehat C}_{\widetilde K}$, we may find a class group character $\Psi$ on the biquadratic field $L=\Q(\sqrt{D}, \sqrt{{\widetilde D}})$ such that for all unramified primes $p$ 
\[ 
\sum_{\substack{ {\mathfrak P} \subset {\mathcal O}_L \\ N_{L/\Q}(\mathfrak P)=p}} \Psi(\mathfrak P) = \Big( \sum_{\substack{ {\mathfrak p} \subset {\mathcal O}_K \\ N_{K/\Q}(\mathfrak p) = p} }\psi({\mathfrak p}) \Big) 
\Big( \sum_{\substack{ \widetilde{\mathfrak{p}} \subset {\mathcal O}_{\widetilde K} \\ N_{\widetilde{K}/\Q}(\widetilde{\mathfrak{p}}) = p }} {\widetilde \psi}(\widetilde{ \mathfrak p}) \Big) = r(p, \psi) r(p, \widetilde \psi). 
\]
The character $\Psi$ is defined by setting 
\[
\Psi({\mathfrak P}) = \psi(N_{L/K}(\mathfrak P)) {\widetilde \psi} (N_{L/{\widetilde K}} (\mathfrak P))
\]
where $N_{L/K}$ denotes the ideal norm from $L$ to $K$ (and similarly for ${\widetilde K}$) and, by 

Diao \cite[Lemma 6]{diao} we may check that $\Psi$ is non-principal except in the cases (i) and (ii) described in the lemma. (Precisely, Lemma 6 of Diao \cite{diao} 
shows that $\Psi$ can be principal only if $\psi$ and ${\widetilde\psi}$ are genus (or real) characters.  The last remaining case when one of $\psi$ or ${\widetilde \psi}$ 
is principal while the other equals a non-principal genus character is easily checked directly.)   Therefore we may write for any complex number $z$ with $|z|=1$
\[
\sum_{v_2(n) = j} \frac{r(n,\psi) r(n,{\widetilde \psi})}{\tau(\ns)^2} z^{\Omega(n)}n^{-s} = L(s,\Psi)^z {\mathcal G}(s;z,j) 
\] 
where ${\mathcal G}(s;z,j)$ is given by a suitable Euler product which converges absolutely in Re$(s) \geq \frac 34$ and satisfies in that region 
\[
\log {\mathcal G}(s;z,j) \ll W^{1/4}. 
\]
Since the discriminant of $L$ is $\ll \Delta^4$, using Lemma \ref{lem8.1} and arguing exactly as in our proof of Lemma \ref{lem8.2} we establish that 
\[
\sum_{v_2(n) = j} \frac{r(n,\psi) r(n,{\widetilde \psi})}{\tau(\ns)^2} z^{\Omega(n)} e^{-n/N}  \ll N(\log N)^{-100}, 
\]
and then the bound of the lemma follows by an application of Fourier inversion.
\end{proof} 

\begin{lemma} \label{lem8.5} Let $N$ be large and $k$ be an integer in the range \eqref{1.4range-wide}.  Let $j$ be $0$ or $1$, and let $d$ be an element of ${\mathcal D}_j$ with $D$ 
denoting the corresponding fundamental discriminant.  Let $\psi$ and ${\widetilde \psi}$ be two characters of the class group of $K =\Q(\sqrt{D})$, and suppose that 
neither $\psi$ nor $\overline{\psi}$ is equal to ${\widetilde \psi}$.  Then
$$ 
\sum_{n \in {\mathcal A}_j(k) } \frac{r(n,\psi) r(n, {\widetilde \psi})}{\tau(\ns)^2} e^{-n/N } \ll N(\log N)^{-100}. 
$$ 
\end{lemma}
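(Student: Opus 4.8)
The plan is to run exactly the argument of Lemma~\ref{lem8.2} (which in turn underlies Lemma~\ref{lem8.4}), the only genuinely new point being the identification of the auxiliary $L$-functions. For $z \in \C$ with $|z| = 1$ introduce the Dirichlet series
\[
\mathcal{F}(s;z,j) := \sum_{v_2(n) = j} \frac{r(n,\psi)\, r(n,\widetilde\psi)}{\tau(\ns)^2}\, z^{\Omega(n)}\, n^{-s},
\]
which converges absolutely for $\Re s > 1$ since $|r(n,\psi)\, r(n,\widetilde\psi)| \le \tau(n)^2$; at the end one recovers the condition $\Omega(n) = k$ by Fourier inversion in the argument of $z$, exactly as in Lemma~\ref{lem8.2}.

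The key observation is an identity of Rankin--Selberg type. For a prime $p$ that splits in $K$, say $(p) = \mathfrak{p}\overline{\mathfrak{p}}$, one has $\psi(\overline{\mathfrak{p}}) = \overline{\psi(\mathfrak{p})}$ (because $\mathfrak{p}\overline{\mathfrak{p}} = (p)$ is principal and $|\psi(\mathfrak{p})| = 1$), and likewise for $\widetilde\psi$; multiplying out $r(p,\psi) = \psi(\mathfrak{p}) + \overline{\psi(\mathfrak{p})}$ against $r(p,\widetilde\psi)$ gives
\[
r(p,\psi)\, r(p,\widetilde\psi) = r(p,\psi\widetilde\psi) + r(p,\psi\overline{\widetilde\psi}),
\]
and the same identity holds trivially (both sides vanishing) for inert $p$. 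Write $\psi_1 := \psi\widetilde\psi$ and $\psi_2 := \psi\overline{\widetilde\psi}$, two characters of the class group of $K$; the hypothesis that neither $\psi$ nor $\overline\psi$ equals $\widetilde\psi$ is exactly the statement that both $\psi_1$ and $\psi_2$ are non-principal. By the displayed identity and \eqref{8.1}, the Dirichlet series $\sum_n r(n,\psi) r(n,\widetilde\psi) n^{-s}$ has, for every prime $p \nmid 2D$, the same $p^{-s}$-coefficient as $L(s,\psi_1) L(s,\psi_2)$, so we may factor
\[
\mathcal{F}(s;z,j) = L(s,\psi_1)^{z}\, L(s,\psi_2)^{z}\, \mathcal{G}(s;z,j)
\]
with $\mathcal{G}(s;z,j) = \prod_p \mathcal{G}_p(s;z,j)$ an Euler product. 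Checking the local factors as in Lemma~\ref{lem8.2}: $\mathcal{G}_p = 1 + O(p^{-2\sigma})$ for $p > W$ with $p \nmid D$ (the $p^{-s}$-terms cancel by the identity above); $\mathcal{G}_p = 1 + O(p^{-\sigma})$ for the finitely many $p \le W$ and for $p = 2$; and at the single ramified prime $d$ the $p^{-s}$-terms need not cancel, but $\mathcal{G}_d = 1 + O(d^{-\sigma})$, which is harmless as $d \ge \Delta/\log\Delta$ is enormous. Hence $\mathcal{G}(s;z,j)$ is holomorphic in $\Re s > \tfrac12$ and satisfies $\log \mathcal{G}(s;z,j) \ll W^{1/4}$ for $\Re s \ge \tfrac34$.

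From here the proof is word-for-word that of Lemma~\ref{lem8.2}. Applying Lemma~\ref{lem8.1} with $m = 2$ and $\eps = \tfrac{1}{100}$ to the non-principal characters $\psi_1$ and $\psi_2$ (recall $(\log N)^{1/2} \le |D| \le \log N$, so the domain $\mathcal{W}$ of \eqref{w-def-first} lies inside the region $\mathcal{R}(\tfrac{1}{100})$), the functions $\log L(s,\psi_1)$ and $\log L(s,\psi_2)$ are analytic in $\mathcal{W}$ and are $O(\log\log N)$ there, so together with the bound on $\mathcal{G}$ we get $|\mathcal{F}(s;z,j)| \ll \log N$ throughout $\mathcal{W}$. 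Now apply Mellin inversion as in \eqref{8.7a}, truncate at height $T = (\log\log N)^2$, and move the contour to $\Re s = 1 - (\log N)^{-1/2}$: the horizontal segments and the tails contribute $\ll N(\log N)^{-100}$ by the decay of $\Gamma$, and the shifted vertical segment contributes $\ll N(\log N)\exp(-\sqrt{\log N}) \ll N(\log N)^{-100}$. This gives $\sum_{v_2(n) = j} \frac{r(n,\psi) r(n,\widetilde\psi)}{\tau(\ns)^2} z^{\Omega(n)} e^{-n/N} \ll N(\log N)^{-100}$ uniformly for $|z| = 1$, and Fourier inversion in $z = e^{i\theta}$ isolates $\Omega(n) = k$, completing the proof.

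I do not expect a real obstacle here: the only content beyond Lemma~\ref{lem8.2} is the Rankin--Selberg identity together with the remark that the hypothesis forces both twists $\psi\widetilde\psi$ and $\psi\overline{\widetilde\psi}$ to be non-principal (so that Lemma~\ref{lem8.1} applies), and the analytic part is identical to before. The one computational nuisance, the non-cancellation of the $p^{-s}$-term in $\mathcal{G}$ at the ramified prime $d$, is absorbed trivially since $d$ is so large.
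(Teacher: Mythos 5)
Your proof is correct and is essentially the same as the paper's: the same Rankin--Selberg identity $r(p,\psi)r(p,\widetilde\psi) = r(p,\psi\widetilde\psi) + r(p,\overline\psi\widetilde\psi)$ (your $\psi\overline{\widetilde\psi}$ and the paper's $\overline\psi\widetilde\psi$ are complex conjugates with identical $r$-values, so the factorizations are equivalent), the same observation that the hypothesis forces both twists to be non-principal, and the same Selberg/Mellin/contour-shift machinery carried over from Lemma \ref{lem8.2}. Your explicit remarks about the ramified prime $d$ and the local factor at $p=2$ are details the paper leaves to the reader but are handled correctly.
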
 
\begin{proof}  For an unramified prime $p$ we have 
\[
r(p,\psi) r(p, \widetilde{\psi}) = r(p, \psi {\widetilde \psi}) + r(p, \overline{\psi} \widetilde \psi). 
\]
To see this, note that if $p$ is inert then $r(p,\psi)=r(p,\widetilde{\psi})=r(p,\psi \widetilde\psi)=r(p,\overline{\psi} \widetilde \psi)=0$, 
while if $p$ splits as ${\mathfrak p}\overline {\mathfrak p}$ then $r(p,\psi)=\psi(\mathfrak{p}) + \overline{\psi}(\mathfrak{p})$ (and similarly 
for the other quantities) so that the stated relation follows with a little algebra.  
It follows that for any complex number $z$ with $|z|=1$ we may write 
\[
\sum_{v_2(n) = j} \frac{r(n,\psi) r(n,{\widetilde \psi})}{\tau(\ns)^2} z^{\Omega(n)} n^{-s} = L(s,\psi {\widetilde \psi})^z L(s, \overline{\psi} \widetilde{\psi})^z  {\mathcal G}(s;z,j) 
\]
where ${\mathcal G}(s;z,j)$ is given by a suitable Euler product which converges absolutely in Re$(s) \geq \frac 34$ and satisfies in that region 
\[
\log {\mathcal G}(s;z,j) \ll W^{1/4}. 
\]
By hypothesis both $\psi \widetilde{\psi}$ and $\overline{\psi} \widetilde{\psi}$ are non-principal characters of the class group of $K$, and therefore arguing exactly as in Lemma \ref{lem8.2} and  Lemma \ref{lem8.4}, we obtain the lemma.
\end{proof} 

\section{Proof of Proposition \ref{prop5.2}} \label{sec9}

In this section we prove Proposition \ref{prop5.2}. Using \eqref{3.5} we may write 
\[
\sum_{n \in {\mathcal A}_j(k)} \frac{R_D(n)}{\tau(\ns)} e^{-n/N} = \frac{1}{h_K} \sum_{\psi \in {\widehat C}_K} \sum_{n \in {\mathcal A}_j(k)} \frac{r(n,\psi)}{\tau(\ns)} e^{-n/N}.
\]
By Lemma 8.2, the contribution of the non-principal characters is $\ll N(\log N)^{-100}$. Thus 
\begin{equation} 
\label{9.1} 
\sum_{n \in {\mathcal A}_j(k)} \frac{R_D(n)}{\tau(\ns)} e^{-n/N} = \frac{1}{h_K}  \sum_{n \in {\mathcal A}_j(k)} \frac{r(n,\psi_0)}{\tau(\ns)} e^{-n/N} + O\big( N(\log N)^{-100}\big),
\end{equation}
where $\psi_0$ is the trivial character. 

To understand the main term on the right-hand side of \eqref{9.1}, we will again follow Selberg \cite{selberg-54} and introduce for any $z \in \C$ with $|z| = 1$ the Dirichlet series
\begin{equation}
\label{9.2} 
\mathcal{F}(s; z, j) :=  \sum_{v_2(n) = j } \frac{r(n,\psi_0)}{\tau(\ns) } z^{\Omega(n)} n^{-s}, 
\end{equation}
which to begin with converges absolutely for $\sigma =\text{Re}(s) >1$ and defines a holomorphic function there.  As in Selberg's work, we will find that $\mathcal{F}$ can be understood in terms of the complex powers of $\zeta$ and $L$-functions, thereby obtaining an analytic continuation of ${\mathcal F}$ to a wider region. The sum in 
\eqref{9.1} can be expressed in terms of a contour integral involving ${\mathcal F}(s)$, which can then be evaluated using the analytic continuation of ${\mathcal F}$
and an argument involving a Hankel contour. Since we need to keep track of the uniformity in $d$, we give a self-contained account in Appendix \ref{app-A}.

Let us turn to the details. We obtain an analytic continuation of ${\mathcal F}$ to a wider 
region by writing 
\begin{equation} 
\label{9.3} 
{\mathcal F}(s;z,j) = (\zeta(s) L(s,\chi_D))^z {\mathcal G}(s;z,j). 
\end{equation} 
Note that $\zeta(s) L(s,\chi_D)$ is the Dedekind zeta function of the quadratic field $\Q(\sqrt{D})$, and 
by $(\zeta(s) L(s,\chi_D))^z$ we mean $\exp(z\log (\zeta(s) L(s,\chi_D))$, where the logarithm is initially defined in $\sigma >1$ by an absolutely convergent Dirichlet series 
as in \eqref{8.2}.  Thus \eqref{9.3} should be thought of as the definition of the function ${\mathcal G}(s;z,j)$, which is holomorphic in the half plane $\sigma >1$.  We shall shortly see 
that ${\mathcal G}(s;z,j)$ is analytic in $\sigma > \frac 12$ with suitable bounds in that region.  By part (2) of Lemma \ref{lem8.1} we may obtain an analytic continuation of $\log ((s-1)\zeta(s) L(s,\chi_D))$ to the region ${\mathcal R}_0$ with corresponding bounds in the region ${\mathcal R}$.   In this way we obtain a continuation of ${\mathcal F}(s;z,j)$ 
(essentially) to the region ${\mathcal R}$, except that we must omit the real line segment to the left of $s=1$ owing to the logarithmic singularity at $s=1$.

From the multiplicative nature of the definition of $\mathcal{F}(s;z,j)$, in the region $\sigma>1$ we see that ${\mathcal G}(s;z,j)$ is given by 
an Euler product $\prod_{p} {\mathcal G}(s;z,j)$.  We now describe these Euler factors.  If $p>W$ we have 
\begin{equation} 
\label{9.4} 
{\mathcal G}_p(s;z,j) = \big(\sum_{\ell =0}^{\infty} r(p^{\ell}, \psi_0) z^{\ell} p^{-\ell s} \big) \big(1- p^{-s}\big)^z \big( 1- \chi_D(p)p^{-s} \big)^z, 
\end{equation} 
and since $r(p,\psi_0) = 1+ \chi_D(p)$ and $0\leq r(p^\ell,\psi_0)\leq (\ell+1)$ it follows that 
\begin{align} \nonumber 
{\mathcal G}_p(s;z,j) = \big( 1 + (1+\chi_D(p)) z p^{-s} + O(p^{-2\sigma})\big) & \big( 1 - (1+\chi_D(p)) z  p^{-s} + O(p^{-2\sigma})\big)
\\ & = 1 + O(p^{-2\sigma}). \label{9.5}
\end{align}
For $3\leq p\leq W$, from our choice \eqref{5.4}, \eqref{5.5} of $d$, we have  $\chi_D(p)=1$ so that  $r(p^{\ell},\psi_0) = (1 \ast \chi_D)(p^{\ell}) = \ell+1$.  Thus 
\begin{equation} 
\label{9.6} 
{\mathcal G}_p(s;z,j) = \Big(\sum_{\ell =0}^{\infty} \frac{r(p^{\ell}, \psi_0)}{\ell+1}  z^{\ell} p^{-\ell s} \Big)  \big(1- p^{-s}\big)^z \big( 1- p^{-s} \big)^z 
= \big( 1- z p^{-s} \big)^{-1} \big( 1-p^{-s}\big)^{2z}.   
\end{equation}
As we have just seen, for the primes $p\geq 3$ there is no dependence on $j$. By contrast, for $p=2$ the behaviour is different in the two cases $j=0$ (where $\chi_D(2)=1$) and $j=1$ (where 
$\chi_D(2)=0$).  Here we find  
\begin{equation} 
\label{9.7} 
{\mathcal G}_2(s;z,j) = 
\begin{cases} 
(1-2^{-s})^{2z} &\text{ if } j=0\\ 
z2^{-s-1} (1-2^{-s})^z &\text{ if } j=1. 
\end{cases} 
\end{equation}
From \eqref{9.6} and \eqref{9.7} note that for all $p\leq W$ (and uniformly for $|z| = 1$)
\begin{equation} 
\label{9.8} 
{\mathcal G}_p(s;z,j) = 1 + O( p^{-\sigma}). 
\end{equation} 
From \eqref{9.5} and \eqref{9.8} we see that the Euler product $\prod_{p} {\mathcal G}_p(s;z,j)$, which was known initially to converge absolutely in $\Re s =\sigma >1$, in fact converges absolutely for $\sigma >\frac 12$.  Moreover for $\sigma \geq \frac 34$ 
we deduce that 
\begin{equation} 
\label{9.9} 
|{\mathcal G}(s;z,j)| \ll \exp\Big( \sum_{p\leq W} O(p^{-3/4}) \Big) \ll \exp(W^{1/4}). 
\end{equation} 

Now we apply Selberg's method as explained in Appendix \ref{app-A}. Specifically, by \eqref{a.10} and \eqref{a.11}, we obtain

\begin{equation}
\sum_{ v_2(n) = j}  \frac{r(n, \psi_0)}{\tau(\ns)} z^{\Omega(n)} e^{-n/N}   =  {N} \frac{(\log N)^{z-1}}{\Gamma(z)} L(1,\chi_{D})^z {\mathcal G}(1;z,j)  + O_{\eps}\Big( N (\log N)^{\Re z -\frac 32 + \eps}\Big). \label{9.95}
\end{equation}
Applying the above with $z= e^{i\theta}$ for $-\pi \leq \theta \leq \pi$ and applying orthogonality (Fourier inversion), we deduce that 
\begin{align}
 \nonumber
 \sum_{\substack{n \in {\mathcal A}_j(k)}}  &\frac{r(n,\psi_0)}{\tau(\ns)} e^{-n/N}  =  \frac{1}{2\pi} \int_{-\pi}^{\pi} 
  \sum_{v_2(n) = j}  \frac{r(n, \psi_0)}{\tau(\ns)} e^{i\theta \Omega(n)} e^{-n/N} e^{-ik\theta} d\theta \nonumber \\ 
&= \frac{N}{\log N} \frac{1}{2\pi} \int_{-\pi}^{\pi} (L(1,\chi_{D}) \log N)^{e^{i \theta}} \frac{{\mathcal G}(1;e^{i\theta},j)}{\Gamma(e^{i\theta})} e^{-ik\theta} d\theta  +O_{\eps}\big( N (\log N)^{\eps - 1/2} \big). \label{9.10}
\end{align}

We now simplify the main term appearing in \eqref{9.10}.  Since $1/\Gamma(z)$ is entire, uniformly for $\theta \in [-\pi,\pi]$ we have
\begin{equation} 
\label{9.11} 
\frac{1}{\Gamma(e^{i\theta})} = \frac{1}{\Gamma(1)} + O(|e^{i\theta} - 1|)  = 1+ O(|\theta|).
\end{equation} 
Now from its definition in \eqref{9.4} we may see that for $p>W$
 \begin{align*}
 \frac{d}{d\phi} & \log {\mathcal G}_p(1;e^{i\phi}, j)  = i \frac{\sum_{\ell=1}^{\infty} \ell r(p^\ell, \psi_0)e^{i\ell \phi} p^{-\ell}}{\sum_{\ell =0}^{\infty} r(p^\ell, \psi_0)  e^{i\ell \phi}p^{-\ell}} 
 +i e^{i\phi} \log \big( (1 - p^{-1}) (1 - \chi_D(p) p^{-1}) \big) \\
& = i \frac{e^{i\phi} r(p,\psi_0)/p + O(1/p^2)}{1+O(1/p)} - ie^{i\phi}\big( (1+\chi_D(p)) p^{-1}  +O(p^{-2}) \big)
= O(p^{-2}).
 \end{align*}
 Integrating this over $\phi$ from $0$ to $\theta$ we obtain that, for $\theta \in [-\pi, \pi]$, 
 \begin{equation} 
 \label{9.12} 
 \frac{{\mathcal G}_p(1;e^{i\theta},j)}{{\mathcal G}_p(1;1,j)} = \exp\big( O (|\theta| p^{-2}) \big) = 1 + O\big( |\theta| p^{-2} \big). 
 \end{equation}
Similarly, from \eqref{9.6} and \eqref{9.7} it follows that for all $p\leq W$ 
\begin{equation} 
\label{9.13} 
\frac{{\mathcal G}_p(1;e^{i\theta},j)}{{\mathcal G}_p(1;1,j)} = 1 + O \big( |\theta| p^{-1} \big) .
\end{equation}
Multiplying the relations in \eqref{9.12} and \eqref{9.13} over all primes, we conclude that 
\begin{equation} 
\label{9.14} 
\frac{{\mathcal G}(1;e^{i\theta},j)}{{\mathcal G}(1;1,j)} = \exp\Big( O\Big( |\theta| \big( \sum_{p\leq W} p^{-1} + \sum_{p> W} p^{-2} \big) \Big) \Big) 
= 1+ O(|\theta| (\log W)^C), 
\end{equation} 
for some constant $C$ (consider the cases $|\theta| \leq (\log \log W)^{-1}$ and $|\theta| > (\log \log W)^{-1}$ separately).  

For later use, let us record the value of ${\mathcal G}(1;1,j)$.  For any prime $p>W$ one may see using \eqref{9.4} and the identity $\sum_{\ell = 0}^{\infty}\sum_{j = 0}^{\ell}  x^j y^{\ell} = (1 - xy)^{-1}(1 - y)^{-1}$ with $x = \chi_D(p)$ and $y = p^{-s}$ 
that ${\mathcal G}_p(s;1,j) =1$, while for $3\leq p \leq W$ it follows from \eqref{9.6} that ${\mathcal G}_p(s;1,j) = 1-1/p^{s}$, and lastly from 
\eqref{9.7} we see that ${\mathcal G}_2(1;1,j) = 2^{-j-1}(1-2^{-1})$.  Combining these observations, it follows that 
\begin{equation} 
\label{9.15} 
{\mathcal G}(1;1,j)  =2^{-j-1}  \prod_{p\leq W} (1 - 1/p) = 2^{-j-1}  \gamma_W. 
\end{equation}

Using \eqref{9.11}, \eqref{9.14}, and \eqref{9.15} we obtain 
\begin{align}
\label{9.16}
\frac{1}{2\pi} \int_{-\pi}^{\pi}  (L(1,\chi_{D}) \log N)^{e^{i\theta} }& \frac{{\mathcal G}(1;e^{i\theta},j)}{\Gamma(e^{i\theta})} e^{-ik\theta} d\theta 
= 2^{-j-1} \gamma_W \frac{(\log (L(1,\chi_{D} )\log N))^{k}}{k!} \nonumber \\
&+O\Big((\log W)^C \int_{-\pi}^{\pi} (L(1,\chi_D)\log N)^{\cos \theta} |\theta| d\theta \Big),
\end{align}
where the main term arises upon noting that, for $X > 0$, 
\[
\frac{1}{2\pi} \int_{-\pi}^{\pi} X^{e^{i\theta}} e^{-ik\theta} d\theta =\frac{1}{2\pi} \int_{-\pi}^{\pi} 
\sum_{\ell=0}^{\infty} \frac{(\log X)^{\ell}}{\ell!}  e^{i\ell \theta} e^{-ik\theta} d\theta \\
= 
\frac{(\log X)^{k}}{k!}.
\]
Note that $\cos \theta \leq 1- \theta^2/8$ for all $|\theta|\leq \pi$, and that (by the class number formula \eqref{eq9.21})
$L(1,\chi_D) \geq |D|^{-1/2} \geq (\log N)^{-1/2}$ so that $L(1,\chi_D) \log N \geq (\log N)^{1/2}$.  Therefore (recalling that $k_0 = \log \log N$ and that $W = \log \log \log N$)
\[
\int_{-\pi}^{\pi} (L(1,\chi_D)\log N)^{\cos \theta} |\theta| d\theta \leq (L(1,\chi_D) \log N) \int_{-\pi}^{\pi} (\log N)^{-\theta^2/16} |\theta| d\theta 
\ll  k_0^{-1} L(1,\chi_D) \log N,
\]

so that the remainder term in \eqref{9.16} is seen to be 
\begin{equation} 
\label{9.17} 
\ll k_0^{-1} L(1,\chi_{D}) (\log N) (\log W)^C  \ll_{\eps} k_0^{\eps - 1} L(1,\chi_D)\log N.
\end{equation}

Using \eqref{9.16} and \eqref{9.17} in \eqref{9.10} we conclude that 
\begin{equation} 
\label{9.18} 
\sum_{n\in {\mathcal A}_j(k)} \frac{r(n,\psi_0)}{\tau(\ns)} e^{-n/N} = 
\frac{N}{\log N} \frac{\gamma_W}{2^{j+1}} \frac{(\log (L(1,\chi_D)\log N))^k}{k!} + O\big( Nk_0^{\eps - 1} L(1,\chi_D) \big), 
\end{equation}
where we absorbed the error term $O_{\eps}(N(\log N)^{\eps - 1/2})$ in \eqref{9.10} into the error term above using $L(1,\chi_D) \gg_{\eps} |D|^{-\eps} \gg (\log N)^{-\eps}$.

We now claim that for $k$ in the range \eqref{1.4range-wide} and all $x$ with $(\log N)^{-1/2} \leq x \leq k_0^4$ we have 
\begin{equation} 
\label{9.19} 
\Big( \frac{\log (x \log N)}{k_0} \Big)^{k} = \big(1 + \frac{\log x}{k_0}\big)^k =  x \big( 1+ O_{\eps}(k_0^{\eps - 1/3})\big) + O(k_0^{-3}).  
\end{equation} 
To verify the claim, consider the following two cases: (i) when $k_0^{-4} \leq x \leq k_0^4$, and (ii) when $(\log N)^{-1/2} \leq x \leq k_0^{-4}$.  
In case (i) note that $|\log x| = O(\log k_0)$ and so  the LHS of \eqref{9.19} is 
\[ \exp\Big( \Big( \frac{\log x}{k_0} + O_{\eps}(k_0^{\eps - 2}) \Big)(k_0 + O(k_0^{2/3})) \Big) 
= x \exp\big( O_{\eps}(k_0^{\eps - 1/3})\big),
\]
 so that the claim follows here.  In case (ii) note that since $k\geq 3k_0/4$ and we have 
 \[
 \Big(1 +\frac{\log x}{k_0}\Big)^{k} \leq \Big(1+ \frac{\log x}{k_0}\Big)^{3k_0/4} \leq x^{3/4} \leq k_0^{-3}, 
 \]
 so \eqref{9.19} holds in this case also.

Applying \eqref{9.19} with $x=L(1,\chi_D)$ (which satisfies $(\log N)^{-1/2} \leq L(1,\chi_D) \ll \log \log N$) we see that 
\begin{align*}
\frac{(\log (L(1,\chi_D)\log N))^k}{k!} &= \frac{k_0^k}{k!} \Big( L(1, \chi_D) + O_{\eps}\big( k_0^{\eps - 1/3}  L(1,\chi_D)  +k_0^{-3} \big)\Big) \\ 
&= \frac{k_0^k}{k!} L(1,\chi_D) + O_{\eps}\big( k_0^{\eps - 5/6}  L(1,\chi_D)\log N   +k_0^{-3} \log N \big),
\end{align*}
where the last estimate follows using the bound $k_0^k/k! \ll k_0^{-1/2} \log N$, which is a consequence of Stirling's formula. 
Using this in \eqref{9.18} we conclude that 
\begin{equation}
\sum_{n\in {\mathcal A}_j(k)}  \frac{r(n,\psi_0)}{\tau(\ns)} e^{-n/N} = 
\frac{N}{\log N}  \frac{\gamma_W}{2^{j+1}} L(1,\chi_D)\frac{k_0^k}{k!} + O_{\eps}\big( k_0^{\eps - 5/6} NL(1,\chi_D) + k_0^{-3} \log N\big) .\label{9.20}
\end{equation}

Using \eqref{9.20} in \eqref{9.1}, and invoking the class number formula 
\begin{equation}\label{eq9.21} h_K = |D|^{1/2} L(1,\chi_D)/\pi\end{equation}
we obtain (note also that $\gamma_W \gg (\log W)^{-1} \gg_{\eps} k_0^{-\eps}$) 
\[
\frac{|D|^{1/2}}{\pi \gamma_W} \sum_{n\in {\mathcal A}_j(k)} \frac{R_D(n)}{\tau(\ns)} e^{-n/N} =  \frac{1}{2^{j+1}} \frac{N}{\log N} \frac{k_0^k}{k!} 
+ O_{\eps}\big(  Nk_0^{\eps - 5/6} + N L(1,\chi_D)^{-1} k_0^{-2}\big).
\]
Finally, recalling \eqref{partial-sum}, Proposition \ref{prop5.2} follows.

\section{Proof of Proposition \ref{prop5.3}} \label{sec10}

We turn now to the proof of Proposition \ref{prop5.3}. We first dispense with the case when $d\not \equiv {\widetilde d} \ \md 8$ which, recalling the definitions \eqref{5.4} and \eqref{5.5}, can only happen in the case $j = 1$.  Note that if $d \equiv 1 \ \md 8$ then the integers $n$ with $n \equiv 2 \ \md{4}$ that are represented by $x^2+dy^2$ satisfy $n\equiv 2 \ \md 8$, while if $d\equiv 5 \ \md 8$ then such integers $n$ must be $\equiv 6 \ \md 8$.  Thus if $d \not \equiv {\widetilde d} \ \md 8$ and if $n \in \mathcal{A}_1(k)$ (which means that $n \equiv 2 \ \md{4}$) then $n$ cannot be represented by both $x^2 + dy^2$ and $x^2 + \tilde d y^2$. Since both $d, \tilde d$ are $1 \md{4}$, it follows from Lemma \ref{lem3.1} (1) that $R_D(n)R_{\widetilde D}(n) =0$, and so \eqref{5.11} follows.

For the rest of the argument we assume that $d\equiv {\widetilde d} \ \md 8$ with the goal now being to establish \eqref{5.10}.  

Using \eqref{3.5} we see that 
$$ 
\sum_{n\in {\mathcal A}_j(k)} \frac{R_D(n)R_{\widetilde D}(n)}{\tau(\ns)^2} e^{-n/N} = 
\frac{1}{h_K h_{\widetilde K}} \sum_{\substack{\psi \in {\widehat C}_K \\ {\widetilde \psi} \in {\widehat C}_{\widetilde K}}} \sum_{n \in {\mathcal A}_j(k)} 
\frac{r(n,\psi) r(n,\widetilde{\psi})}{\tau(\ns)^2} e^{-n/N}. 
$$ 
We may now use Lemma \ref{lem8.4} to estimate the contribution of all the characters $\psi$ and ${\widetilde \psi}$ apart from when (i) 
both $\psi$ and ${\widetilde \psi}$ are the principal characters in their respective class groups, and (ii) both $\psi$ and ${\widetilde \psi}$ are the non-trivial genus characters in their class groups.  Note that the second case only arises when $j = 1$, and since $d \equiv {\widetilde d} \ \md 8$ we have here (with notation as in Proposition \ref{prop27})
$$
r(n,\psi_1) r(n, \widetilde{\psi}_1) = r(n,\psi_0) r(n, \widetilde{\psi}_0)
$$ 
for all $n$.  To see this, we use Proposition \ref{prop27} and multiplicativity of $r(n,\psi)$ to deduce that $r(n,\psi_1) = \chi_{-4}( n) r(n,\psi_0)$ and similarly 
$r(n, \widetilde{\psi}_1) = \chi_{-4}(n) r(n, \widetilde{\psi}_0)$ for odd 
$n$, so that the stated relation holds for $n$ odd.  Further $r(2^a, \psi_1) = \chi_d(2^a) r(2^a, \psi_0)$ and $r(2^a, \widetilde{\psi}_1) = \chi_{{\widetilde d}}(2^a) r(2^a, \widetilde{\psi}_0)$, and since $\chi_d(2) = \chi_{\widetilde d}(2)$ (because $d\equiv {\widetilde d} \ \md 8$) we see the stated relation for even $n$ as well.     
Thus 
\begin{equation} 
\label{10.1} 
\sum_{n\in {\mathcal A}_j(k)} \frac{R_D(n)R_{\widetilde D}(n)}{\tau(\ns)^2} e^{-n/N} = 
\frac{2^j}{h_K h_{\widetilde K}} M + O\big( N (\log N)^{-100}\big), \end{equation}
where 
\begin{equation} 
\label{10.2} 
 M :=  \sum_{n\in {\mathcal A}_j(k)} \frac{r(n,\psi_0)r(n,{\widetilde \psi}_0)}{\tau(\ns)^2} e^{-n/N}  .
 \end{equation} 
 
 To evaluate the main term $M$ above, we proceed analogously to the previous section using Selberg's method. 
 To highlight the close parallels with the earlier argument, we will use the same notation for the analogous Dirichlet series 
 that arise here.  
 The first step is to consider, for $|z| = 1$, the Dirichlet series
\begin{equation}
\label{10.3} 
\mathcal{F}(s; z, j) :=  \sum_{v_2(n) = j } \frac{r(n,\psi_0)r(n,{\widetilde \psi}_0)}{\tau(\ns)^2 } z^{\Omega(n)} n^{-s}, 
\end{equation}
which converges absolutely for $\sigma =\text{Re }(s) >1$ and defines a holomorphic function there.   As in the 
previous section, we shall obtain an analytic continuation of ${\mathcal F}$ to a wider region by writing 
\begin{equation} 
\label{10.4} 
{\mathcal F}(s;z,j) = (\zeta(s) L(s,\chi_D) L(s,\chi_{\widetilde D}) L(s, \chi_{d{\widetilde d}}))^z {\mathcal G}(s;z,j).
\end{equation} 
Note that $\zeta(s)L(s,\chi_D) L(s,\chi_{\widetilde D}) L(s,\chi_{d\widetilde{d}})$ is the Dedekind zeta function of 
the biquadratic field $L={\mathbb Q}(\sqrt{D}, \sqrt{\widetilde D})$, and 
\[ (\zeta(s)L(s,\chi_D) L(s,\chi_{\widetilde D}) L(s,\chi_{d\widetilde{d}}))^z = \exp(z\log (\zeta(s)L(s,\chi_D) L(s,\chi_{\widetilde D}) L(s,\chi_{d\widetilde{d}}))),\] where the logarithm is initially  defined 
in $\sigma >1$ by an absolutely convergent Dirichlet series as in \eqref{8.2}.  Thus \eqref{10.4} should be thought of as the 
definition of the function ${\mathcal G}(s;z,j)$, which is holomorphic in the half plane $\sigma >1$.  We shall see shortly that ${\mathcal G}(s;z,j)$ 
is analytic in $\sigma > \frac 12$ with suitable bounds in that region.  By Lemma \ref{lem8.1} (2) we may obtain an 
analytic continuation of $\log ((s-1) \zeta(s)L(s,\chi_D) L(s,\chi_{\widetilde D}) L(s,\chi_{d\widetilde{d}}))$ to the region ${\mathcal R}_0$ 
with corresponding bounds in the region ${\mathcal R}$.   In this way we obtain a continuation of ${\mathcal F}(s;z,j)$ essentially to the region ${\mathcal R}$, 
with the caveat that the real line segment to the left of $s=1$ must be omitted owing to the logarithmic singularity at $s=1$.  
 
 From the multiplicative definition of ${\mathcal F}(s;z,j)$, in the region $\sigma> 1$ we see that ${\mathcal G}(s;z,j)$ is 
 given by an  Euler product $\prod_{p} {\mathcal G}_p(s;z,j)$.  We continue as in Section 9 by  describing these Euler 
 factors, and showing that the Euler product converges absolutely in 
$\sigma >\frac 12$ (and assume below that $\sigma >\frac 12$). 

In the case $p>W$ we have 
\[
{\mathcal G}_p(s;z,j) = \big( \sum_{\ell =0}^{\infty} 
r(p^{\ell}, \psi_0) r(p^{\ell},\widetilde{\psi_0}) z^{\ell} p^{-\ell s}\big)  \big( 1-p^{-s}\big)^{z} \big( 1- \chi_D(p)p^{-s}\big)^{z} \big( 1- \chi_{\widetilde D}(p)p^{-s}\big)^z \big( 1-\chi_{d{\widetilde d}}(p) p^{-s}\big)^z.
\]
Since $0\leq r(p^\ell,\psi_0) r(p^{\ell},\widetilde{\psi_0}) \leq (\ell+1)^2$, and $r(p,\psi_0) r(p,\widetilde{\psi_0}) = (1+ \chi_D(p))(1+\chi_{\widetilde D}(p)) 
= 1 +\chi_D(p) + \chi_{\widetilde D}(p) + \chi_{d{\widetilde d}} (p)$ (note that for odd $p$, we have $\chi_D(p) \chi_{\widetilde D}(p) = (\frac{D}{p})(\frac{\widetilde D}{p}) 
= (\frac{D{\widetilde D}}{p}) = (\frac{d{\widetilde d}}{p}) = \chi_{d{\widetilde d}}(p)$), we see that 
\begin{align} 
\label{10.6} 
{\mathcal G}_p(s;z,j) &= \big( 1 + z (1+\chi_D(p)+\chi_{\widetilde D}(p) + \chi_{d {\widetilde d}}(p)) p^{-s} + O(p^{-2\sigma}) \big) \times \nonumber \\
& \qquad \times  \big( 1 - z (1+\chi_D(p)+\chi_{\widetilde D}(p) + \chi_{d {\widetilde d}}(p)) p^{-s} + O(p^{-2\sigma}) \big) \nonumber \\
&= 1+ O(p^{-2\sigma}). 
\end{align}

Turning to the case $3 \leq p \leq W$, recall that $\ns$ is the product of all the prime divisors of $n$ which are $\leq W$, and recall also that $\chi_D(p) = \chi_{\widetilde D}(p)=1$ from 
the definition of $\mathcal{D}_0$,  $\mathcal{D}_1$ (see \eqref{5.4} and \eqref{5.5}).  Thus (as in the last section) we see that 
\begin{equation}
\label{10.7} 
\mathcal{G}_p(s,z,j) = \big(\sum_{\ell =0}^{\infty} z^{\ell} p^{-\ell s} \big) \big( 1-p^{-s} \big)^{4z} 
 = \big(1 - z p^{-s} \big)^{-1} \big( 1- p^{-s}\big)^{4z}.
 \end{equation}
As in the last section, for primes $p\geq 3$ there is no difference between the cases $j=0$ and $j=1$, but at the prime $p = 2$, there is a distinction in 
the definition of ${\mathcal G}_2(s;z,j)$.  Here we have (compare with \eqref{9.7}) 
\begin{equation}
\label{10.8}
 \mathcal{G}_2(s,z,j) =
 \begin{cases} 
 (1-2^{-s})^{4z} &\text{if  } j=0\\
 z2^{-s-2} (1-2^{-s})^{2z} &\text{if } j=1, 
 \end{cases}
 \end{equation} 
 upon noting that when $j=0$ we have $\chi_D(2) = \chi_{\widetilde D}(2) = \chi_{d {\widetilde d}}(2)= 1$ (since $D$, ${\widetilde D}$ and $d{\widetilde d}$ are 
 all $1\ \md 8$), and that when $j=1$ we have $\chi_D(2) = \chi_{\widetilde D}(2) = 0$ and  $\chi_{d\widetilde d}(2) = 1$ (since $d\equiv {\widetilde d} \ \md 8$ so that 
 $d\widetilde d \equiv 1 \ \md 8$).   From \eqref{10.7} and \eqref{10.8} note that for all $p\leq W$ 
 \begin{equation} 
 \label{10.9}  
 {\mathcal G}_p(s;z,j) =  1+ O( p^{-\sigma}).  
 \end{equation} 
 From \eqref{10.6} and \eqref{10.9} we see that the Euler product $\prod_p {\mathcal G}_p(s;z,j)$, which was known initially to converge absolutely for $\sigma >1$, 
 in fact converges absolutely in $\sigma > \frac 12$.  Moreover for $\sigma \geq \frac 34$ we deduce that 
 \begin{equation} 
 \label{10.10} 
 | {\mathcal G}(s;z,j)| \ll \exp\big( \sum_{p\leq W} O(p^{-3/4})\big) \ll \exp(W^{1/4}). 
 \end{equation} 
 
By Selberg's method, specifically by \eqref{a.10} and \eqref{a.12}, we obtain 
\begin{equation}
\label{10.11} 
 \sum_{ v_2(n) = j}   \frac{r(n, \psi_0) r(n,{\widetilde \psi}_0)}{\tau(\ns)^2} z^{\Omega(n)} e^{-n/N}   =  \frac{N}{\log N} \frac{(\ldd \log N)^{z}}{\Gamma(z)}  {\mathcal G}(1;z,j) 
  + O_{\eps}\big( N (\log N)^{\text{Re }z -\frac 32 + \eps}\big)
\end{equation}
where, here and below, 
\[
 \ldd := L(1,\chi_{D}) L(1, \chi_{\tilde D}) L(1, \chi_{d \tilde d}).
 \]
 The main quantity of interest $M$ (see \eqref{10.2}) may be recovered by Fourier inversion, integrating over $z=e^{i\theta}$ with $-\pi \leq \theta \leq \pi$. 
 Thus, analogously to \eqref{9.10} we find 
 $$ 
 M= \frac{N}{\log N} \frac{1}{2\pi} \int_{-\pi}^{\pi} (\ldd \log N)^{e^{i\theta}} \frac{\mathcal G(1;e^{i\theta},j)}{\Gamma(e^{i\theta})} e^{-ik \theta} d\theta 
 +O_{\eps}\big( N (\log N)^{\eps - 1/2}\big). 
 $$ 
 Arguing as in \eqref{9.11}, \eqref{9.12}, \eqref{9.13} we find analogously to \eqref{9.14} 
 $$ 
 \frac{{\mathcal G}(1;e^{i\theta},j)}{\Gamma(e^{i\theta})} = {\mathcal G}(1;1,j) \big( 1+ O (|\theta| (\log W)^C)\big), 
 $$ 
 for a suitable constant $C$.  Using this in our expression for $M$, and arguing as in \eqref{9.16} and \eqref{9.17} we 
 arrive at 
\begin{equation}
\label{10.13}
M = \frac{N}{\log N} \frac{ (\log (\ldd \log N))^k}{k!}  {\mathcal G}(1;1,j)  +O\big( k_0^{\eps - 1} N \ldd \big).
\end{equation}
Here the error term $O_{\eps}(N (\log N)^{-1/2 + \eps})$ has been absorbed into the 
error term above,  since the $L$-values at $1$ are all $\gg |D|^{-\eps} \gg (\log N)^{-\eps}$.

Applying \eqref{9.19} with $x= \ldd$ (which satisfies $(\log N)^{-\eps} \ll \ldd \ll k_0^3$), we 
see that for $k$ in the range \eqref{1.4range-wide} 
\[
\frac{ (\log ( \ldd \log N))^k}{k!} = \ldd
\frac{k_0^{k}}{k!} +(\log N) O_{\eps}\big( k_0^{\eps - 5/6}\ldd + k_0^{-3}\big). 
\]
Using this in \eqref{10.13} we conclude that  
\begin{equation}
\label{10.14}
 M = \frac{N}{\log N} 
{\mathcal G}(1;1,j) \ldd \frac{k_0^k }{k! } + N O_{\eps}\big( k_0^{\eps - 5/6} \ldd +  k_0^{-3} \big).
\end{equation}

From \eqref{10.6}, \eqref{10.7}, and \eqref{10.8} we see that 
\[
{\mathcal G}(1;1,j)  = 2^{-j-1} \gamma_W^3  \prod_{p > W} \big(1 +O(p^{-2})\big)  = 2^{-j-1} \gamma_W^3 \big(1 + O(W^{-1})\big). 
\]
(In fact, rather than use the crude bound \eqref{10.6} one may compute $\mathcal{G}_p(1,1,j) = 1 - \chi_{d\tilde d}(p)p^{-2}$ for $p > W$, but we do not need this.)
Using this together with the class number formula \eqref{eq9.21} and \eqref{10.14} in \eqref{10.1}, \eqref{10.2} we obtain
\begin{align*} 
\sum_{n\in {\mathcal A}_j(k)} &  \frac{R_D(n)R_{{\widetilde D}}(n)}{\tau(f)^2} e^{-n/N} 
=\big(\tfrac{1}{2} + O(W^{-1})\big) \pi^2 \gamma_W^3 |D{\widetilde D}|^{-1/2} L(1,\chi_{d\widetilde {d}}) \frac{N}{ \log N} \frac{k_0^k}{k!} \\
&+ N |D \tilde D|^{-1/2} O_{\eps}\Big( k_0^{\eps - 5/6} L(1,\chi_{d{\widetilde d}}) + 
k_0^{-3}  L(1,\chi_D)^{-1} L(1,\chi_{\widetilde D})^{-1}\Big), 
\end{align*}
where we have absorbed the error term $O(N(\log N)^{-100})$ from \eqref{10.1} into the much larger error term above. 

To complete the proof of Proposition \ref{prop5.3}, we multiply though by $|D \tilde D|^{1/2} /\pi^2 \gamma_W^2$ (noting that any extraneous factors of $\gamma_W$ may be absorbed by $k_0^{\eps}$ terms) and, finally, use \eqref{partial-sum}.

\section{Proof of Proposition \ref{prop5.4}} \label{sec11}

In this final section of the main paper, we establish Proposition \ref{prop5.4}.

Using \eqref{3.5} we see that 
$$ 
\sum_{n\in {\mathcal A}_j(k)} \frac{R_D(n)^2}{\tau(\ns)^2} e^{-n/N} = \frac{1}{h_K^2} \sum_{\psi, \widetilde{\psi} \in {\widehat C}_K} \sum_{n \in {\mathcal A}_j(k)} \frac{r(n,\psi) r(n,\widetilde{\psi})}{\tau(\ns)^2} e^{-n/N}. 
$$ 
Using Lemma \ref{lem8.5}, we may bound the contribution of terms with ${\widetilde \psi} \notin \{ \psi, \overline{\psi}\}$ by $\ll N(\log N)^{-100}$.  It remains to treat the cases when 
${\widetilde \psi} = \psi$ or $\overline{\psi}$.  Note that if ${\mathfrak a}$ is an ideal of norm $n$, then so is $\overline{\mathfrak a}$ and moreover $(n)={\mathfrak a} \overline{\mathfrak a}$.  Therefore $\overline \psi({\mathfrak a}) = {\psi}(\overline{\mathfrak a})$ and it follows that $r(n,\psi) = r(n, \overline{\psi})$ is real valued.  Thus when $\widetilde \psi$ is $\psi$ or ${\overline \psi}$ we have 
   $r(n,\psi) r(n, {\widetilde \psi}) = r(n,\psi)^2$, which is real and non-negative. 
   Collecting the observations so far we find 
\begin{equation} 
\label{11.1} 
\sum_{n\in {\mathcal A}_j(k)} \frac{R_D(n)^2}{\tau(\ns)^2} e^{-n/N} \ll \frac{1}{h_K^2} \sum_{\psi \in {\widehat C}_K}  \sum_{n \in {\mathcal A}_j(k)} \frac{r(n,\psi)^2}{\tau(\ns)^2} e^{-n/N}
+ N (\log N)^{-100}. 
\end{equation} 

The contribution of the real characters $\psi$ (which are the genus characters, and there are at most two of them) to \eqref{11.1} is 
$$ 
\ll \frac{1}{h_K^2}  \sum_{n \in {\mathcal A}_j(k)} \frac{r(n,\psi_0)^2}{\tau(\ns)^2} e^{-n/N} \ll \frac{2^k}{h_K^2} \sum_{n \in {\mathcal A}_j(k)} \frac{r(n,\psi_0)}{\tau(\ns)} e^{-n/N}, 
$$
since $r(n,\psi_0) \leq 2^{\Omega(n)} \leq 2^k$.  
Using \eqref{9.1} and Proposition \ref{prop5.2}, the above quantity is bounded by 
\[
\ll_{\eps} \frac{2^k}{h_K} \frac{\gamma_W}{|D|^{1/2}} \Big(\sum_{n \in \mathcal{A}_j(k)} e^{-n/N} + k_0^{\eps - 5/6} N + k_0^{-2} L(1,\chi_D)^{-1} N \Big). 
\]
By \eqref{partial-sum} and Stirling's formula, this is
\begin{equation}\label{11.2} 
\ll N \frac{2^k}{h_K} \frac{\gamma_W}{|D|^{1/2}} \big(k_0^{-1/2} + k_0^{-2} L(1,\chi_D)^{-1} \big).
\end{equation} 

Now we bound the contribution of the complex characters $\psi$ in \eqref{11.1}.  For a complex character $\psi$, note that 
\begin{equation} 
\sum_{n \in {\mathcal A}_j(k)} \frac{r(n,\psi)^2}{\tau(\ns)^2} e^{-n/N}  \leq \sum_{n} r(n,\psi)^2 e^{-n/N}  = \frac{1}{2\pi i} \int_{c-i\infty}^{c+i\infty} \sum_{n=1}^{\infty} 
 \frac{r(n,\psi)^2}{n^s} N^s \Gamma(s) ds, \label{11.3} 
 \end{equation} 
 where we take $c= 1+ 1/\log N$.  By considering whether $p$ does or does not split in $\Q(\sqrt{D})$, we may check that for any unramified prime $p$  
 $$ 
 r(p,\psi)^2 = 1 + \chi_D(p)  + \sum_{N(\mathfrak p) =p} \psi^2({\mathfrak p}).
 $$ 
 Since $\psi$ is not real, note that $\psi^2$ is not principal. By comparing Euler products we may therefore write 
 $$ 
 \sum_{n=1}^{\infty} \frac{r(n,\psi)^2}{n^s} = \zeta(s) L(s,\chi_D) L(s,\psi^2) {\mathcal G}(s), 
 $$ 
where ${\mathcal G}(s)$ is given by an Euler product which converges absolutely in the region $\Re s > \tfrac 12+ \delta$ and is uniformly bounded in that region.  Moving the line of integration in \eqref{11.3} to the line $\Re s=\frac{3}{4}$, we see that this integral equals 
\begin{equation}
\label{11.4} 
NL(1,\chi_D) L(1,\psi^2) {\mathcal G}(1) + \frac{1}{2\pi i} \int^{3/4 + i \infty}_{3/4 - i \infty} \zeta(s) L(s,\chi_D) L(s,\psi^2) {\mathcal G}(s) N^s \Gamma(s) ds.
\end{equation}
To bound the integral above, we use the convexity bound for $L$-functions (see Chapter 5 of \cite{IwKo}, as well as \eqref{8.4} in the case of $L(s,\psi^2)$) which gives 
$$ 
|\zeta(\tfrac 34+it)| \ll_{\eps} (1+|t|)^{1/8+\eps}, \ \  |L(\tfrac 34+it, \chi_D)| \ll_{\eps} (|D| (1+|t|))^{1/8+\eps}, 
$$
and 
$$
 |L(\tfrac 34+it, \psi^2)| \ll_{\eps} (|D|(1+|t|)^2)^{1/8+\eps}.
$$ 
Noting further that $|{\mathcal G}(\frac 34+it)| \ll 1$, and $|\Gamma(\frac 34+it)| \ll (1+|t|)^{1/4} e^{-\pi |t|/2}$ (see (C.19) of \cite{mv}), we may bound the 
integral  on $\Re s =\frac{3}{4}$ in \eqref{11.4}  by
\[ 
\ll_{\eps} N^{3/4}\int^{\infty}_{-\infty} |D|^{1/4+\eps} (1+|t|) e^{-\pi |t|/2} dt \ll N^{3/4} |D|^{1/2}. 
\]

With this in mind, and referring back to \eqref{11.3}, \eqref{11.4}, it follows that for complex $\psi$ we have
\[ 
\sum_{n \in \mathcal{A}_j(k)} \frac{r(n, \psi)^2}{\tau(\ns)^2} e^{-n/N}  \ll N L(1,\chi_D) L(1, \psi^2) + N^{3/4} |D|^{1/2}
.\]
Now $L(1,\psi^2) \ll (\log |D|)^2$ by \eqref{8.4}, and so we conclude that 
\[ 
\sum_{n \in \mathcal{A}_j(k)} \frac{r(n, \psi)^2}{\tau(\ns)^2} e^{-n/N} \ll N L(1, \chi_D) (\log |D|)^2 + N^{3/4} |D|^{1/2} \ll N L(1,\chi_D) (\log |D|)^2,
\]
 where the last estimate follows since $|D| \ll \log N$ and $L(1,\chi_D) \gg |D|^{-\eps}$.  
 
Thus the contribution of the complex characters $\psi$ to \eqref{11.1} is 
\begin{equation*}
\ll  \frac{1}{h_K^2} h_K  N L(1,\chi_D)  (\log |D|)^2 \ll N |D|^{-1/2} (\log |D|)^2. 
\end{equation*}
Combining this with the contribution of the real characters given in \eqref{11.2} we conclude that 
 $$ 
 \sum_{n \in {\mathcal A}_j(k)} \frac{R_D(n)^2}{\tau(\ns)^2}e^{-n/N} \ll  N \frac{2^k}{h_K} \frac{\gamma_W}{|D|^{1/2}} \big( k_0^{-1/2} + k_0^{-2} L(1,\chi_D)^{-1} \big) 
 + N |D|^{-1/2} (\log |D|)^2. 
 $$ 
 Proposition  \ref{prop5.4} follows upon multiplying through by $|D|/\pi^2 \gamma_W^2$ and using the class number formula together with the trivial bound $\gamma_W^{-2} \ll \log |D|$.

\appendix
\section{Details of Selberg's method}\label{app-A}

In this appendix we supply proofs of the applications of Selberg's method as used in Sections 9 and 10; namely the 
asymptotic formulae \eqref{9.95} and \eqref{10.11}.   Let ${\mathcal F}(s;z,j)$ be defined either as in \eqref{9.2} or \eqref{10.3}, 
and correspondingly let ${\mathcal G}(s;z,j)$ be defined as in \eqref{9.3} or \eqref{10.4}.  Define $f(n)$ to be $r(n,\psi_0)/\tau(\ns)$ 
in the situation of Section 9, and $f(n)$ to be $r(n,\psi_0) r(n,\widetilde{\psi_0})/\tau(\ns)^2$ in the situation of Section 10.   Our goal is 
to obtain the stated asymptotic formulae for 
\begin{equation} 
\label{A.1} 
\sum_{n\in {\mathcal A}_j(k)} f(n) z^{\Omega(n)} e^{-n/N} = \frac{1}{2\pi i} \int_{c-i\infty}^{c+i\infty} {\mathcal F}(s;z,j) \Gamma(s) N^s ds, 
\end{equation} 
where we take $c = 1 +1/\log N$.  

We begin by truncating the integral in \eqref{A.1} to $|\Im s| \leq (\log \log N)^2$.  Note that in both situations under consideration $f(n)$ is  
non-negative and bounded by $\tau(n)^2$ so that 
$$ 
|{\mathcal F}(c+it;z,j)| \leq \sum_{n=1}^{\infty}\tau(n)^2 n^{-c} \ll \prod_{p} \big( 1 + 4p^{-c}  + O(p^{-2})\big) \ll  (\log N)^4.
$$ 
Since $|\Gamma(c+it)| \ll (1+|t|)^{c-1/2} e^{-\pi |t|/2}$ by Stirling's formula, we deduce that the tails of 
the integral in \eqref{A.1} contribute 
$$ 
\int_{|t| \geq (\log \log N)^2} N^c (\log N)^4 (1+|t|)^{c-1/2} e^{-\pi |t|/2} dt \ll  N (\log N)^{-100}.   
$$ 
Thus 
\begin{equation} 
\label{A.2} 
 \sum_{n\in {\mathcal A}_j(k)} f(n) z^{\Omega(n)} e^{-n/N} = \frac{1}{2\pi i} \int_{c-i(\log \log N)^2}^{c+i (\log \log N)^2} {\mathcal F}(s;z,j) \Gamma(s) N^s ds + O\big( 
 N (\log N)^{-100}\big), 
 \end{equation} 
and note that the error term above is negligible compared to the error terms in the formulae \eqref{9.95} and \eqref{10.11} that we are seeking to establish.

To proceed further with evaluating the truncated integral in \eqref{A.2}, we will shift contours using a Hankel or a key-hole type contour.  As in \eqref{w-def-first} denote by ${\mathcal W}$ the 
region
  \[
  \mathcal{W} := \{ 2> \Re s > 1 - 2(\log N)^{-1/2}, |\Im s| < 2(\log \log N)^2\}, 
  \]
 and let ${\mathcal W}_*$ denote the domain ${\mathcal W}$ with the line segment from $1-2 (\log N)^{-1/2}$ to $1$ excised.   In the region ${\mathcal W}_*$ 
 we define $\log (s-1)$ to be the principal branch of the logarithm, taking real values for $s\in (1,\infty]$ and if $s$ lies just above the cut then the argument is $i\pi$, while if $s$ lies just below the cut then the argument is $-i\pi$.  This gives a definition of $(s-1)^{w} = \exp(w \log (s-1))$ 
 (for any complex number $w$), which is holomorphic in ${\mathcal W}_*$.   Now, as in \eqref{9.3} or \eqref{10.4}, we may write 
 ${\mathcal F}(s;z,j) = \zeta_K(s)^z {\mathcal G}(s;z,j)$ where $K$ is either a quadratic (in the case of Section 9) or a biquadratic (in the case of Section 10) 
 field.   Here ${\mathcal G}(s;z,j)$ extends to a holomorphic function in a region containing ${\mathcal W}$, and throughout ${\mathcal W}$ it 
 satisfies the bound $|{\mathcal G}(s;z,j)| \ll \exp(W^{1/4})$ (see \eqref{9.9}   or \eqref{10.10}).   From Lemma \ref{lem8.1} (2) we 
 see that $\zeta(s)^{z}$ extends to a holomorphic function on ${\mathcal W}_*$, and for $s \in {\mathcal W}_*$ with $|\Im(s)| \geq 1$ we have 
 $$ 
 |\zeta_K(s)^z|  \leq \exp( |\log \zeta_K(s)|) \ll (\log N)^{\eps},
 $$ 
 where we used \eqref{8.5} together with $|D_K| \ll \Delta^4 \leq (\log N)^4$.  Finally, again by the second part of Lemma \ref{lem8.1}, we see that 
 $((s-1) \zeta_K(s))^z$ extends to a holomorphic function in ${\mathcal W}$, and satisfies for $s\in {\mathcal W}$ with $|\Im(s)|\leq 1$ 
 $$ 
 |((s-1) \zeta_K(s))^z| \leq \exp( |\log ((s-1) \zeta_K(s))|) \ll (\log N)^{\eps}. 
 $$ 
 
 Synthesizing the remarks above, we conclude that ${\mathcal F}(s;z,j)$ extends holomorphically to ${\mathcal W}_*$, and for 
 $s\in {\mathcal W}_*$ with $|\Im(s)| \geq 1$ satisfies 
 \begin{equation} 
\label{A.4} 
|{\mathcal F}(s;z,j)| \ll (\log N)^{\eps}. 
\end{equation} 
Moreover $(s-1)^z {\mathcal F}(s;z,j)$ extends holomorphically to ${\mathcal W}$, and for $s\in {\mathcal W}$ with $|\Im (s)|\leq 1$ satisfies 
\begin{equation} 
\label{A.5}
|(s-1)^z {\mathcal F}(s;z,j)| \ll (\log N)^{\eps}. 
\end{equation}

We return now to the truncated integral in \eqref{A.2}, which we 
will replace with an integral over the following Hankel-type contour. 

This consists of
\begin{itemize}
\item $\Gamma_1$, the horizontal line segment from $c - i (\log \log N)^2$ to $1 - (\log N)^{-1/2} - i(\log \log N)^2$;
\item $\Gamma_2$, the vertical line segment from $1 - (\log N)^{-1/2} - i(\log \log N)^2$ to $1 - (\log N)^{-1/2} $;
\item $\Gamma_3$, which consists 
 of a path $\Gamma_3^-$ going horizontally from $1-(\log N)^{-1/2}$ to $1-r$ staying just below the line $\Im s = 0$, then a circle 
$\Gamma_3^{\circ}$ of radius $r$ about $s = 1$, then a horizontal path $\Gamma_3^+$ from $1-r$ back to $1 - (\log N)^{-1/2}$ but now staying just above the line $\Im s=0$ (here $r\leq 1/\log N$ is a parameter which we later allow to tend to $0$); 
\item $\Gamma_4$, the vertical line segment from $1 - (\log N)^{-1/2}$ to $1 - (\log N)^{-1/2} + i(\log \log N)^2$;
\item $\Gamma_5$, the  horizontal line segment from $1 - (\log N)^{-1/2} + i(\log \log N)^2$ to $c + i (\log \log N)^2$. 
\end{itemize}

Since the integrand ${\mathcal F}(s;z,j) \Gamma(s)N^s$ is holomorphic in ${\mathcal W}_*$,  we may replace the vertical contour from 
$c - i (\log \log N)^{1/2}$ to $c + i(\log \log N)^2$ by the Hankel-type contour $\Gamma_1 \cup \Gamma_2 \cup \Gamma_3 \cup \Gamma_4 \cup \Gamma_5$. 

(Note that a limiting argument, which we supress, is required to deal with the fact that $\Gamma_3^{\pm}$ lie on the boundary of $\mathcal{W}_*$ rather than within $\mathcal{W}_*$ itself.)  Denote, for $\ell = 1,2,3,4,5$, 
\[  
I_\ell := \frac{1}{2\pi i} \int_{\Gamma_\ell} {\mathcal F}(s;z,j)N^s \Gamma(s) ds.
\]

To estimate the horizontal integrals $I_1$ and $I_5$, we use \eqref{A.4} together with the exponential decay of $|\Gamma(s)|$.  Thus we obtain 
\[
I_1, I_5 \ll \int_{1-(\log N)^{-1/2}}^c N^{\sigma} (\log N)^{\eps} e^{-(\log \log N)^2} d\sigma \ll N (\log N)^{-100}. 
\]
The vertical integrals $I_2$ and $I_4$ are likewise easy to handle.  If $|t| \geq 1$ then \eqref{A.4} gives 
$|{\mathcal F}(1-(\log N)^{-1/2}+it;z,j)| \ll (\log N)^{\eps}$, while if $|t| \leq 1$ then from \eqref{A.5} we deduce 
that $|{\mathcal F}(1-(\log N)^{-1/2}+it;z,j)| \ll (\log N)^{1/2+ \eps}$ (here we take $t$ to be either strictly positive or 
strictly negative, but avoiding point $t=0$).  Combining these estimates with the bound $|\Gamma(1-(\log N)^{-1/2} + it)| \ll e^{-|t|}$ 
we obtain 
\[
I_2, I_4 \ll \int_{0}^{(\log \log N)^2} N^{1-(\log N)^{-1/2}} (\log N)^{1/2+\eps} e^{-|t|} dt \ll N (\log N)^{-100}. 
\]

It remains lastly to consider the integral $I_3$ over the Hankel contour $\Gamma_3$.  Set 
$$ 
{\mathcal H}(s;z,j) = \Gamma(s) (s-1)^z {\mathcal F}(s;z,j). 
$$ 
Consider the circle centered at $1$ with radius $2(\log N)^{-1/2}$.  Since $|\Gamma(s)|$ is bounded in this region, from \eqref{A.5} we see that 
$|{\mathcal H}(s;z,j)| \ll (\log N)^{\eps}$.   Therefore, if $s$ is any point within a circle of radius $(\log N)^{-1/2}$ centered at $1$, we see 
that 
\begin{align*} 
{\mathcal H} & (s;z,j) - {\mathcal H}(1;z,j) = \frac{1}{2\pi i} \int_{|w-1| = 2(\log N)^{-1/2}} {\mathcal H}(w;z,j) \Big( \frac{1}{w-1} - \frac{1}{w-s}\Big) dw \nonumber \\
& \ll \int_{|w-1| = 2(\log N)^{-1/2}} |{\mathcal H}(w;z,j)| \frac{|s-1|}{|(w-1)(w-s)|} |dw| \ll (\log N)^{1/2 + \eps} |s-1| , 
\end{align*}
where we have used Cauchy's formula, and the fact that $|w-s|$ and $|w-1|$ are $\gg (\log N)^{-1/2}$.  Thus 
\begin{align}
\label{A.8}
I_3 &=\frac{1}{2\pi i} \int_{\Gamma_3} (s-1)^{-z} N^s {\mathcal H}(s;z,j) ds \nonumber \\
&= 
\frac{1}{2\pi i} \int_{\Gamma_3} (s-1)^{-z} N^s \Big( {\mathcal H}(1;z,j)  +O\big( |s-1| (\log N)^{1/2+\eps}\big)\Big) ds.
\end{align}

Consider first the error term in \eqref{A.8}.  On the two horizontal parts of $\Gamma_3$, namely $s= \sigma+ 0^{\pm} i$ (depending 
whether we are just above or just below the cut) we have $|(s-1)^{-z}N^s| \ll (1-\sigma)^{-\Re z} N^{\sigma}$, so that these 
integrals contribute 
$$ 
\ll  (\log N)^{1/2 +\eps}  \int_{1-(\log N)^{-1/2}}^{1-r} (1-\sigma)^{1-\Re z} N^{\sigma} d\sigma \ll N (\log N)^{\Re z -3/2+ \eps}. 
$$
Similarly, the (nearly) circular portion of $\Gamma_3$ contributes 
$$ 
\ll N^{1+r} r^{2-\Re z} (\log N)^{1/2 +\eps} \ll N (\log N)^{\Re z -3/2+ \eps},
$$ 
since $r \leq 1/\log N$.  Thus the error term in \eqref{A.8} is $\ll N (\log N)^{\Re z- 3/2+ \eps}$.

Turning to the main term in \eqref{A.8}, we claim that 
\begin{equation}
\label{A.9} 
\frac{1}{2 \pi i }\int_{\Gamma_3} N^s (s - 1)^{-z} ds = \frac{1}{\Gamma(z)} N (\log N)^{z-1} + O(N (\log N)^{-100}).
\end{equation} 
To obtain \eqref{A.9}, denote by $\mathcal{H}$ (the Hankel contour) the contour obtained from $\Gamma_3$ by extending both horizontal parts out to $-\infty$. 
The integral in \eqref{A.9} extended over $\mathcal{H}$ is equal to $\frac{1}{\Gamma(z)} N(\log N)^{z-1}$, as follows from the standard Hankel integral \cite[Theorem II.0.17]{tenenbaum} and a substitution.  Now note that 

\begin{align*}
\int_{\mathcal{H}\setminus \Gamma_3} |N^s (s - 1)^{-z}||ds| & \ll \int_{-\infty}^{1-(\log N)^{-1/2}} N^{\sigma} (1-\sigma)^{-\Re z} d\sigma \\ & \ll N^{1-(\log N)^{-1/2}} (\log N)^{O(1)}, 
\end{align*}
 which is much smaller than $N(\log N)^{-100}$, and thus establishes the claim \eqref{A.9}.   Putting all this together gives
 $$ 
 I_3= {\mathcal H}(1;z,j) N \frac{(\log N)^{z-1}}{\Gamma(z)} + O \big( N (\log N)^{\Re z -3/2 +\eps} \big). 
 $$ 
 
Combining this with our estimates for $I_1$, $I_2$, $I_4$ and $I_5$, from \eqref{A.2} we conclude that 
\begin{equation}\label{a.10}
\sum_{n\in {\mathcal A}_j(k)} f(n) z^{\Omega(n)}e^{-n/N} =  
{\mathcal H}(1;z,j) N \frac{(\log N)^{z-1}}{\Gamma(z)} + O_{\eps} \Big( N (\log N)^{\Re z -3/2 +\eps} \Big). 
\end{equation}
 Finally, in the context of Section 9 note that (using \eqref{9.3}, and since $\lim_{s\to 1} (s-1) \zeta(s)= 1$) 
\begin{equation}\label{a.11}
 {\mathcal H}(1;z,j) = \lim_{s\to 1} \Gamma(s) (s-1)^z {\mathcal F}(s;z,j) = L(1,\chi_D)^z {\mathcal G}(1;z,j), 
\end{equation}
 while in the context of Section 10 (using \eqref{10.4}) 
\begin{equation}\label{a.12} 
 {\mathcal H}(1;z,j) = (L(1,\chi_D) L(1, \chi_{\widetilde D}) L(1,\chi_{d{\widetilde d}}) )^z {\mathcal G}(1;z,j). 
\end{equation}
 This completes our justification of \eqref{9.95} and \eqref{10.11}.

\bibliographystyle{plain}

\end{document}